\DeclareMathOperator*{\find}{find}
\newcommand{\R}{\mathbb{R}} 
\newcommand{\K}{\mathbb{K}} 
\newcommand{\N}{\mathbb{N}}
\newcommand{\0}{\mathbf{0}}
\newcommand{\1}{\mathbf{1}}
\newcommand{\psd}{\mathbb{S}}
\DeclarePairedDelimiter{\abs}{\lvert}{\rvert}
\DeclarePairedDelimiter{\norm}{\lVert}{\rVert}
\DeclarePairedDelimiter{\ceil}{\lceil}{\rceil}
\DeclarePairedDelimiter{\Tr}{\textrm{Tr}(}{)}
\DeclarePairedDelimiter{\diag}{\textrm{diag}(}{)}
\DeclarePairedDelimiterX{\inp}[2]{\langle}{\rangle}{#1, #2}
\DeclareMathOperator*{\argmin}{arg\!\,min}
\newtheorem{thm}{Theorem}[section]
\newtheorem{lem}{Lemma}[section]
\newtheorem{prob}{Problem}
\newtheorem{assum}{Assumption}
\newtheorem{rem}{Remark}
\newcommand{\dx}{\Delta x} 
\newcommand{\du}{\Delta u} 
\newcommand{\dc}{\mathcal{D}}
\newcommand{\pc}{\mathcal{P}}
\newcommand{\bpc}{\bar{\mathcal{P}}}
\DeclarePairedDelimiter{\mabs}{\textbf{Mabs}(}{)}
\newcommand{\all}{\textrm{all}}
\newcommand{\alt}{\textrm{alt}}
\newcommand{\tz}{\tilde{\zeta}}
\newcommand{\tm}{\tilde{\mu}}
\newcommand{\tph}{\tilde{\phi}}
\title{\LARGE \bf
Data-Driven Stabilizing and Robust Control \\ of Discrete-Time Linear Systems \\ with Error in Variables
}
\author{Jared Miller$^1$, Tianyu Dai $^1$, Mario Sznaier$^1$
\thanks{$^1$J. Miller, T. Dai, and M. Sznaier are with the Robust Systems Lab,  ECE Department, Northeastern University, Boston, MA 02115. (e-mails: \{miller.jare, dai.ti\}@northeastern.edu, msznaier@coe.neu.edu).}
\thanks{J. Miller, T. Dai, and M. Sznaier were partially supported by NSF grants  CNS--1646121, ECCS--1808381 and CNS--2038493, AFOSR grant FA9550-19-1-0005, and ONR grant N00014-21-1-2431.
J. Miller was in part supported by the Chateaubriand Fellowship of the Office for Science \& Technology of the Embassy of France in the United States, AFOSR grant FA9550-19-1-0005, and the AFOSR International Student Exchange Program.}}
\begin{document}

\maketitle
\thispagestyle{empty}
\pagestyle{empty}


\begin{abstract}
\label{sec:abstract}

This work presents a sum-of-squares (SOS) based framework to perform data-driven stabilization and robust control tasks on discrete-time linear systems where the full-state observations are corrupted by L-infinity bounded input, measurement, and process noise (error in variable setting). Certificates of state-feedback superstability, quadratic stability or positive stability of all plants in a consistency set are provided by solving a feasibility program formed by polynomial nonnegativity constraints. Under mild compactness and data-collection assumptions, SOS tightenings in rising degree will converge to recover the true superstabilizing or positive stabilizing controller, with some conservatism introduced for quadratic stabilizability. The performance of this SOS method is improved through the application of a theorem of alternatives while retaining tightness, in which the unknown noise variables are eliminated from the consistency set description. This SOS feasibility method is extended to provide worst-case-optimal robust controllers under H2  control costs. The consistency set description may be broadened to include cases where the data and process are affected by a combination of L-infinity bounded  measurement, process, and input noise.

\end{abstract}
\section{Introduction}
\label{sec:introduction}
\ac{DDC} is a group of methods that sidestep system-identification  in order to design controllers that regulate all possible plants that are consistent with observed data. 
The dynamical model considered in this paper is a discrete-time linear system with states $x_t \in \R^n$ and inputs $u_t \in \R^m$, for which measured data up to a finite time horizon of $T$ is available as $\dc = \{\hat{u}_t,\hat{x}_t\}_{t=1}^T$. The system includes $L_\infty$-bounded full-state measurement noise $\dx_t$, input noise $\du_t$, and process noise $w_t$ to form the model
\begin{subequations}
\label{eq:model}
\begin{align}
    x_{t+1} &= A x_t + B u_t + w_t \label{eq:discrete_dynamics}\\
    \hat{x}_t &= x_t + \dx_t,
    \quad   \hat{u}_t = u_t + \Delta u_t.
    \label{eq:noise_corrupt}
\end{align}
\end{subequations}

It is desired to find a constant matrix $K \in \R^{m \times n}$ generating state-feedback law $u = K x$ such that $A+BK$ is superstable \cite{polyak2001optimal, polyak2002superstable}, quadratically stable \cite{barmish1985necessary}, or positive stable \cite{farina2000positive} for all plants $(A, B)$
that could have generated the data in $\dc$. An additional task is to formulate a controller $K$ that minimizes the worst-case-$H_2$-norm across all possible $\dc$-consistent systems $(A, B)$.
Expressing the model \eqref{eq:model} purely in terms of observations $\dc$ and noise processes $(\dx, \du, w)$,
\begin{equation}
      \label{eq:noise_bilinear}  \hat{x}_{t+1} - \dx_{t+1} = A (\hat{x}_t-\dx_t) + B(u_t-\Delta u_t) - w_t.
      \end{equation}
Equation \eqref{eq:noise_bilinear} involves multiplications $(A \dx_t, B \du_t)$ between unknown variables inside the description of the set $(A, B)$ of data-consistent plants. This bilinearity significantly increases the complexity of finding controllers $K$, since bilinearities typically yield NP-hard problems. This work formulates control of all consistent plants as a \ac{POP}, which is approximated by a converging sequence of \acp{SDP} through \ac{SOS} methods \cite{lasserre2009moments}. A theorem of alternatives based on robust \acp{SDP} is used to reduce the complexity of the generated \ac{SOS} programs by eliminating the noise variables $(\dx, \du, w)$.
      
Most prior work on \ac{DDC} involves process noise $w$ alone, and sets $\dx,\du=0$. One such method includes the work in \cite{de2019formulas}, which utilizes Willem's fundamental lemma \cite{willems2005note} to generate control policies based solely on the input-state-output data $\dc$. The method in \cite{de2019formulas} presents a set of data-driven programs that include stabilization and LQR control. Regularization methods have been applied to use Willem's fundamental lemma in the robust setting \cite{de2019formulas, coulson2019data, coulson2022robust, shafai2022data}, but bilinear dependence and fragility degrades performance in the derived controllers.


Other \ac{SDP}-based methods for nonconservative \ac{DDC} under $L_2$-bounded process noise (with $\dx,\du=0$) includes an S-Lemma \cite{van2020noisy}, Petersen's Lemma \cite{bisoffi2021data}, through updating uncertanties \cite{berberich2020combining}, and Lypaunov-Metzler (bilinear) inequalities for switched systems \cite{bianchi2022data}. 
The work in \cite{van2020informativity} defines a notion of `data informativity,' demonstrating that the assumptions required for a data-driven stabilization task is less restrictive than what is needed to perform system identification. 

The $L_\infty$ noise bound arises from error propagation of finite-difference approximations when computing derivatives and sampling. Another advantage of $L_\infty$ noise as compared to $L_2$ noise is that multiple datasets $\dc$ with differing time horizons can be concatenated without scaling or shifting the noise effects.
The work in \cite{berberich2020combining} briefly mentions adaptation for the $L_\infty$-bounded process noise case, while the computational complexity of $L_\infty$-bounded stabilization increases in an exponential manner with the number of measurements in $\dc \ (T)$. A SOS-based approach addressing the same problem can be found in \cite{dai2020data} which demands less complexity. Another way to reduce the complexity is to use the conservative notions of superstability \cite{polyak2001optimal}. 
Further work on $L_\infty$-bounded process noise for superstabilization may be found in \cite{cheng2015,dai2018data,dai2022convex}. The work in \cite{miller2023ddcpos} uses \acp{LP} to perform positive stabilization of linear systems under $L_\infty$-bounded noise.

The case where measurement noise $\dx\neq 0$ is present is also called the \ac{EIV} setting. Prior work for \ac{EIV} has mostly concentrated on observation and system identification \cite{norton1987identification, cerone1993feasible, cerone2011set, soderstrom2018errors}. It is worth noting that in the $L_\infty$-bounded setting, the set of plants $(A, B)$ consistent with pure process noise $w$ forms a polytope, while the plants consistent with \ac{EIV} are generically contained in a non-convex region \cite{cerone1993parameter}. 

The contributions of this paper are,
\begin{itemize}
    \item Formulation of stabilization under \ac{EIV} as a polynomial optimization problem
    \item Application of \ac{SOS} methods to recover (superstabilizing,  quadratically stabilizing, positive stabilizing) constant state-feedback stabilizing controllers of all consistent plants with recorded data
    \item Simplification of \ac{SOS} programs by using a Theorem of Alternatives to eliminate affine-dependent noise variables
    \item Analysis of computational complexity of \ac{SOS} programs
    \item Worst-case optimal $H_2$ control
    \item Proofs of continuity, polynomial approximability, and convergence
\end{itemize}



This paper is laid out as follows:
Section \ref{sec:preliminaries} introduces preliminaries such as acronym definitions, notation,  stability conditions for classes of linear systems, and \ac{SOS} methods. Section \ref{sec:full_method} creates a \ac{BSA} description of the consistency set of plants compatible with measurement-noise-corrupted data, and formulates \ac{SOS} algorithms to recover (super or quadratic) stabilizing controllers. Section \ref{sec:altern} reduces the computational complexity of these \ac{SOS} programs by eliminating the affine-dependent measurement noise variables through a Theorem of Alternatives. 
Section \ref{sec:optimal_control} applies the Full and Alternative methods to synthesize worst-case-optimal controllers for systems under measurement noise. 
Section \ref{sec:complexity} quantifies this reduction in computational complexity by analyzing the size and multiplicities of \ac{PSD} matrices involved in these \ac{SOS} methods.
Section \ref{sec:all_noise} extends the previously presents \ac{SOS} formulations to problems with measurement, input, and process noise. 
Section \ref{sec:examples} demonstrates the \ac{SOS} stabilizing algorithms on a set of examples. 
Section \ref{sec:extensions} extends the \ac{SOS} framework to cases including varying noise sets, non-uniform sampling times, and switched systems stabilization (with a known switching sequence). 
Section \ref{sec:conclusion} concludes the paper.
Appendix \ref{app:sos} reviews details of \ac{SOS} methods with scalar nonnegativity and matrix positive-semidefiniteness constraints.  
Appendix \ref{app:continuity} proves that multiplier functions for the Alternatives program may be chosen to be continuous. Appendix \ref{app:polynomial_sw} builds on this result and proves that the multiplier functions may also be chosen to be symmetric-matrix-valued polynomials.
\section{Preliminaries}
\label{sec:preliminaries}
\subsection{Acronyms/Initialisms}
\begin{acronym}[DLCLF]

\acro{BSA}{Basic Semialgebraic}

\acro{CLF}{Control Lyapunov Function}

\acro{DDC}{Data-Driven Control}

\acro{DLCLF}{Dual Linear Copositive Lyapunov Function}

\acro{EIV}{Error in Variables}
\acroindefinite{EIV}{an}{a}


\acro{LMI}{Linear Matrix Inequality}
\acroplural{LMI}[LMIs]{Linear Matrix Inequalities}
\acroindefinite{LMI}{an}{a}


\acro{LP}{Linear Program}
\acroindefinite{LP}{an}{a}


\acro{PMI}{Polynomial Matrix Inequality}
\acroplural{PMI}[PMIs]{Polynomial Matrix Inequalities}

\acro{POP}{Polynomial Optimization Problem}

\acro{PSD}{Positive Semidefinite}

\acro{PD}{Positive Definite}

\acro{SDP}{Semidefinite Program}
\acroindefinite{SDP}{an}{a}

\acro{SOS}{Sum of Squares}
\acroindefinite{SOS}{an}{a}

\acro{WSOS}{Weighted Sum of Squares}

\end{acronym}

\subsection{Notation}

The set of real numbers is $\R$, its $n$-dimensional vector space is $\R^n$, and its $n$-dimensional nonnegative real orthant is $\R^n_+$. The set of natural numbers is $\N$, and the subset of natural numbers between $1$ and $N$ is $1..N$. 

The set of $m\times n$ matrices with real entries is $\R^{m \times n}$. The transpose of a matrix $Q$ is $Q^T$, and the subset of $n\times n$ symmetric matrices satisfying $Q^T = Q$ is $\psd^n$. The square identity matrix is $I_n \in \psd^n$. The rectangular identity matrix $I_{n \times m}$ is a  matrix whose main diagonal has values of 1 with all other entries equal to zero (consistent with MATLAB's $\textrm{eye}(n, m)$ function). The inverse of a matrix $Q \in \R^{n \times n}$ is $Q^{-1}$, and the inverse of its matrix transpose is $Q^{-T}$. The trace of a matrix $Q$ is $\Tr{Q}$. The Kronecker product of two matrices $A$ and $B$ is $A \otimes B$. The set of real symmetric \ac{PSD} matrices $\psd_+^n$ have all nonnegative eigenvalues $(Q \succeq 0)$, and its subset of \ac{PD} matrices $\psd^n_{++}$ have all positive eigenvalues $(Q \succ 0)$.  The $L_\infty$
operator norm of a matrix $M \in \R^{m \times n}$ is $\norm{M}_\infty = {\max_i \sum_{j=1}^n}\abs{M_{ij}}$. The asterisk operator $\ast$ may be used to fill in transposed entries of a symmetric matrix. The minimum and maximum eigenvalues of a matrix $Q \in \psd^n$ are $\lambda_{\min}(Q)$ and $\lambda_{\max}(Q)$ respectively. The elementwise division between vectors $a, b \in \R^n$ is $a./b$.

The set of polynomials in variable $x$ with real coefficients is $\R[x]$. The degree of a polynomial $p(x) \in \R[x]$ is $\deg p$. The set of polynomials with degree at most $d$ for $d \in \N$ is $\R[x]_{\leq d}$. The set of vector-valued polynomials is $(\R[x])^n$ and the set of matrix-valued polynomials is $(\R[x])^{m \times n}$. The subset of $n \times n $ symmetric-matrix-valued polynomials is $\psd^n[x]$, and its subcone of \ac{PSD} (PD) polynomial matrices is $\psd^n_+[x] \ (\psd^n_{++}[x])$. 
The set of \ac{SOS} polynomials is $\Sigma[x]$,
and the set of \ac{SOS} matrices of size $n\times n$ is $\Sigma^n[x] \subset  \psd^n[x]$. The set of \ac{WSOS} polynomials over \iac{BSA} set $\K$ is $\Sigma[\K]$, with \ac{WSOS} matrices over the same set denoted as $\Sigma^n[\K]$.

The projection operator $\pi^x: (x, y) \mapsto x$ applied to a set $X \times Y$ is $\pi^x(X \times Y) = \{x \mid (x, y) \in X \times Y\}$.


\subsection{Stability of Discrete-Time Linear Systems}
\label{sec:stability}
Let $x_{t+1} = A x_t + B u_t$ be a discrete-time linear system.
A system $A^{cl} = A+BK$ under the control law $u_t = K x_t$ is \textit{Schur} (stable) if all eigenvalues of $A+BK$ have absolute values less than 1. This subsection will provide certificates for the more conservative but computationally tractable notions of superstability, quadratic stability, and positive-stability.


\subsubsection{(Extended) Superstability}

The closed-loop system $A^{cl}=A+BK$ is superstable \cite{polyak2001optimal, polyak2002superstable} if
\begin{equation}
    \norm{A + BK}_\infty < 1 \quad \text{($L_\infty$ Operator Norm).} \label{eq:superstability_norm}
\end{equation}

Consequences of superstability are that $\norm{x}_\infty$ is a polyhedral \ac{CLF}, and that each pole $a+b \mathbf{j}$ of $A^{cl}$ satisfies $\abs{a} + \abs{b} < 1$.
Letting $\gamma = \norm{A + BK}_\infty < 1$, a superstable system will satisfy $\norm{x_t}_\infty \leq \gamma^{t/n} \norm{x_0}_\infty$ for any initial condition $x_0$ to the closed loop system $A + BK$ \cite{SZNAIER19963550, polyak2002superstable}. A system is Extended-Superstable if there exists a vector $v>0$ and a matrix $Y = \diag{v}$ such that \cite{polyak2004extended}
\begin{equation}
    \norm{Y^{-1}(A + BK)Y}_\infty < 1 \quad. \label{eq:ext_superstability_norm}
\end{equation}
The weighted $L_\infty$ norm $\norm{x./v}_\infty$ is a Lyapunov function of any system satisfying \eqref{eq:ext_superstability_norm}.
Superstability in \eqref{eq:superstability_norm} is the  case where $v = \1_n$. A system $A+BK$ is extended superstable if there exists a matrix $M \in \R^{n \times n}$, a vector $v>0$ with $Y = \diag{v}$, and a matrix $S \in \R^{m \times n}$ such that \cite[Theorem 1]{polyak2004extended}:
\begin{subequations}
\label{eq:ext_superstable}
\begin{align}
    &\textstyle \sum_{j=1}^n M_{ij} < v_i & & \forall i \in 1..n \label{eq:superstable_strict} \\
    &\textstyle -M_{ij} \leq A_{ij}v_j + \sum_{k=1} B_{ik}S_{kj} \leq M_{ij} & &\forall i,j \in 1..n.\label{eq:superstable_nonstrict}
\end{align}
\end{subequations}
An extended superstabilizing controller may be chosen as $K = S Y^{-1}$ if \eqref{eq:ext_superstable} is feasible. The matrix $M$ may be selected as $M_{ij} = \abs{A^{cl}_{ij} Y}$ for each $i, j \in 1..n$.


\subsubsection{Positive-Stabilization}

The system $x_{t+1} = A x_t + B u_t$ is \textit{internally positive} if the initial condition is positive ($x_0 \in \R^n_{>0}$ element-wise) and input sequence is nonnegative ($\forall t: u_t \in \R^n_{\geq 0}$), then $\forall t: x_t \in \R^n_{\geq 0}$ \cite{farina2000positive}. A discrete-time system is internally positive if $(A, B)$ all have nonnegative entries. An uncontrolled linear system $x_{t+1} = A x_t$ is positive if $x_0 > 0 \Longrightarrow \forall t: x_t \geq 0$, which will occur when $A$ is a nonnegative matrix.
A necessary and sufficient condition for a nonnegative matrix $A$ to be Schur is that there exists a vector $v \in \R_{>0}^n$ such that
\begin{align}
\label{eq:stab_dlclf}
    v - A v \in \R_{>0}^n.
\end{align}
The vector $v$ \eqref{eq:stab_dlclf} inspires \iac{DLCLF} $V(x) = \max(x./v)$. 

A system  $x_{t+1} = A x_t + B u_t$ is positive-stabilizable by a state-feedback law $u_t = Kx_t$ if there exists a vector $v \in \R_{>0}^n,$ a matrix $Y = \diag{v}$, and a matrix $S \in \R^{m \times n}$, such that \cite{rami2007controller}
\begin{subequations}
\label{eq:stab_clean_d}
\begin{align}
        &v-(A Y + B S) \1_n  \in \R^n_{>0}  \label{eq:stab_clean_d_geq}\\
        &AY+BS \in \R^{n \times n}_{\geq 0}. 
\end{align}
\end{subequations}
The controller matrix $K$ may be recovered by $K = S Y^{-1}$. Condition \eqref{eq:stab_clean_d} involves $n$ strict inequality constraints and $n^2$ nonstrict inequality constraints.

\subsubsection{Quadratic Stabilizability}
Quadratic stabilizability is another notion of stability that could be used when synthesizing control laws. A discrete-time system of the form in \eqref{eq:discrete_dynamics} is quadratically stabilizable if there exists \iac{PD} matrix $Y \in \psd_{++}^n$ and a matrix $S \in \R^{m \times n}$ such that \cite{barmish1985necessary}
\begin{equation}
\label{eq:quad_lmi_def}
   P(A, B) =  \begin{bmatrix}
    Y & A Y + B S\\
    \ast & Y
  \end{bmatrix} \in \psd_{++}^{2n}.
\end{equation}

The controller $K$ may be recovered as $K = S Y^{-1}$. The matrix $P(A, B)$ is an affine function of $(A, B)$ for fixed parameters $(Y, S)$, and Equation \eqref{eq:quad_lmi_def} is therefore an \ac{LMI}. The function $x^T Y^{-1} x$ is a \ac{CLF}. 

\begin{rem}
 The set of positive-stabilizable systems is strictly contained in the set of extended superstable systems. There is not necessarily a containment between classes of systems rendered  extended superstable or quadratically stable by a single state feedback control $K$.
\end{rem}

\begin{rem}
    Sign-constraints on the elements of $K$ in the extended superstable and positive stable cases can  be imposed by pinning corresponding elements of $Y$ in a linear fashion (e.g. $Y_{12} \geq 0 \Longrightarrow K_{12} \geq 0$). This sign-setting task is difficult and nonconvex for quadratic stabilization.
\end{rem}

\section{Full Program}
\label{sec:full_method}

This section will present \ac{SOS} approaches towards recovering stabilizing controllers $K$ (according to the criteria laid out in Section \ref{sec:stability}) applicable for all plants consistent with data in $\dc$. In this section we set $\du=0, \ w=0$ to simplify explanation and notation while still preserving the $A \dx $ bilinearity. A detailed discussion with $\du, w \neq 0$ is provided in Section \ref{sec:all_noise}.



\subsection{Consistency Sets}

The \ac{BSA} consistency set $\bar{\pc}(A, B, \dx)$ of plants and noise values $(A, B, \dx) \in \R^{n \times n} \times \R^{n \times m} \times \R^{n \times T}$ that are consistent with data $\dc$ under a noise bound of $\epsilon$ is described by:
\begin{align}
    \bar{\pc}: \ \begin{Bmatrix*}[l]0=- \dx_{t+1} + A\dx_{t} + h_t^0  & \forall t = 1..T-1 \\
     \norm{\dx_t}_\infty \leq \epsilon & \forall t = 1..  T\end{Bmatrix*}, \label{eq:pcbar}
\end{align}
with an intermediate definition of the affine weights $h^0_t$ as
        \begin{align}
            h_t^0 &= \hat{x}_{t+1} - A \hat{x}_t - B u_t & \forall t = 1..T-1. \label{eq:aff_weight}
        \end{align}

\begin{rem}
Multiple observations $\{\dc_{k}\}_{k=1}^{N_d}$ of the same system may be combined together by \ac{BSA} intersections to form $\bar{\pc} = \cap_{k=1}^{N_d} \bar{\pc}(\dc_k)$.
\end{rem}

The semialgebraic set of plants $\pc(A, B)$ consistent with the data in $\dc$ is the projection
\begin{align}
    \pc(A, B) = \pi^{A,B} \bar{\pc}(A, B, \dx). \label{eq:pc}
\end{align}

\begin{rem}
The consistency sets $\bar{\pc}$ and $\pc$ may be nonconvex and could even be disconnected.
\end{rem}

\begin{rem}
The describing constraints of $\bpc$ are bilinear in terms of the groups $(A, B)$ and $(\dx)$. Checking membership for fixed plant $(A_0, B_0) \in \pc$ may be accomplished by solving a feasibility \ac{LP} in terms of $\dx$.
\end{rem}

\begin{prob}
\label{prob:stable}
The data driven stabilization problem is to find $K$ such that $A^{cl} = A+BK$ is [Superstable, Extended Superstable, Positive Stable, Quadratically Stable] for all $(A, B) \in \pc.$
\end{prob}

\subsection{Function Programs}

This section will pose problem \ref{prob:stable} as a set of polynomial optimization programs, one for each class of stability. All programs will require the following assumption (for later convergence)
The following assumption is required for finite convergence of Problem \ref{prob:super_full}:
\begin{assum}
\label{assum:compact}
The sets $\bar{\pc}$ (and therefore $\pc$) are compact (Archimedean).
\end{assum}
Assumption \ref{assum:compact} may be satisfied if sufficient data is collected. 


\subsubsection{Extended Superstability}

Extended superstabilization by a fixed $K \in \R^{m \times n}$ will be verified through equation \eqref{eq:ext_superstable} for all plants in $\pc$. The $M$ matrix in \eqref{eq:ext_superstable} will be a matrix-valued function $M(A, B): \R^{n \times n} \times \R^{n \times m} \rightarrow \R^{n \times n}$. The matrix function $M(A, B)$ will be $\dx$-independent given that the matrix $A+BK$ is also $\dx$-independent.

\begin{prob}
\label{prob:super_full}
 Superstabilizing Problem \ref{prob:stable} for a small margin $\delta > 0$ may be solved by
\begin{subequations}
\label{eq:super_full}
\begin{align}
    \find_{ v \in \R^{n}_{>0}, \ Y \in \R^{m \times n}} \ & \forall (A, B, \dx) \in \bar{\pc}(A, B, \dx):\\
    & \quad \forall i = 1..n: \label{eq:super_full_row}\\
    & \qquad v_i - \delta - \textstyle\sum_{j=1}^n M_{ij}(A, B) \geq 0 \nonumber  \\
    & \quad \forall i = 1..n, \ j = 1..n: \label{eq:super_full_element}\\
    & \qquad  M_{ij}(A, B) -\left(A_{ij}v_j + \textstyle \sum_{\ell=1}^m B_{i \ell}S_{\ell j}\right) \geq 0 \nonumber\\
    & \qquad  M_{ij}(A, B) +\left(A_{ij} v_j + \textstyle \sum_{\ell=1}^m B_{i \ell}S_{\ell j}\right) \geq 0.\nonumber
\end{align}
\end{subequations}
\end{prob}



\begin{lem}
\label{lem:m_cont_select}
There exists a continuous selection for \\ 
$M(A, B)$ given $K$ under Assumption \ref{assum:compact}.
\end{lem}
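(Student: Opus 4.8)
The plan is to exhibit an \emph{explicit} continuous selection rather than invoke an abstract selection theorem. Throughout I read the hypothesis ``given $K$'' as the statement that the fixed $K$ superstabilizes every plant in $\pc$, i.e.\ $\norm{A+BK}_\infty < 1$ for all $(A,B) \in \pc$; this is precisely the regime in which Problem \ref{prob:super_full} is meant to be feasible, and without it no valid $M$ exists. The candidate selection is the entrywise absolute value of the closed-loop matrix,
\[
M_{ij}(A,B) = \abs{A_{ij} + \textstyle\sum_{\ell=1}^m B_{i\ell} K_{\ell j}},
\]
which is the composition of the affine (hence continuous) map $(A,B) \mapsto A^{cl}_{ij}$ with the continuous scalar absolute value. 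Continuity of $M(\cdot,\cdot)$ is therefore immediate and, by itself, does not even require compactness.

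Next I would verify that this selection lands inside the superstability constraint set \eqref{eq:superstable}. The elementwise bounds \eqref{eq:superstable_nonstrict} hold identically, since $-\abs{z} \le z \le \abs{z}$ with $z = A^{cl}_{ij}$. For the row constraints \eqref{eq:superstable_strict}, observe that $\sum_{j=1}^n M_{ij}(A,B) = \sum_{j=1}^n \abs{A^{cl}_{ij}}$ is the $i$-th absolute row sum of $A^{cl}$, which for every $i$ is bounded above by the maximum absolute row sum $\norm{A+BK}_\infty$.

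The only place Assumption \ref{assum:compact} enters --- and the crux of the argument --- is in upgrading the pointwise strict inequalities $\norm{A+BK}_\infty < 1$ to a \emph{uniform} margin, which is what Problem \ref{prob:super_full} demands through $\delta$. The map $(A,B) \mapsto \norm{A+BK}_\infty$ is continuous, and $\pc$ is compact by Assumption \ref{assum:compact}, so by the extreme value theorem the maximum $\gamma^\star = \max_{(A,B)\in\pc}\norm{A+BK}_\infty$ is attained; since $K$ superstabilizes every plant in $\pc$ we conclude $\gamma^\star < 1$. Hence $\sum_{j=1}^n M_{ij}(A,B) \le \gamma^\star < 1$ uniformly over $\pc$, and any $\delta \in (0,\,1-\gamma^\star]$ makes the selection feasible for \eqref{eq:full_nonneg}. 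The main obstacle is thus conceptual rather than computational: continuity is free from the closed-form expression, and compactness is needed precisely to rule out the degenerate case in which the supremum of the row sums equals $1$ and no positive margin survives.

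As an alternative one could obtain a continuous selection abstractly via Michael's selection theorem, since the set-valued map $(A,B)\mapsto\{M : M \text{ satisfies } \eqref{eq:superstable}\}$ is lower hemicontinuous with nonempty closed convex values under the same hypothesis. I prefer the explicit construction because it is also the natural starting point for the polynomial refinement of Appendix \ref{app:polynomial_sw}, where the nonsmooth absolute value must be replaced by a symmetric-matrix-valued polynomial.
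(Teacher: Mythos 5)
Your proof is correct, but it takes a genuinely different route from the paper's. The paper treats the constraint system \eqref{eq:full_nonneg_row}--\eqref{eq:full_nonneg_element} as a set-valued map $\mathcal{M}(A,B)$, proves it is lower semicontinuous using a perturbation result for linear-inequality systems, and then invokes Michael's selection theorem; it only names the Frobenius-minimal element as an example of a continuous selection at the very end. You instead exhibit the selection in closed form, $M_{ij}(A,B)=\abs{A^{cl}_{ij}}$, get continuity for free from the composition of an affine map with $\abs{\cdot}$, and use Assumption \ref{assum:compact} only through the extreme value theorem to turn the pointwise bounds $\norm{A+BK}_\infty<1$ into a uniform margin $\gamma^\star<1$ so that some $\delta\in(0,1-\gamma^\star]$ is admissible. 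This is more elementary and self-contained (no set-valued analysis), and in fact your selection coincides with the paper's Minimal Map whenever the latter is feasible, since minimizing $\norm{\tilde M}_F^2$ over $\{\tilde M\ge 0,\ \tilde M_{ij}\ge\abs{A^{cl}_{ij}}\}$ is achieved entrywise at $\abs{A^{cl}_{ij}}$ and the row-sum constraint is then inactive. What the paper's abstract route buys is robustness: Michael's theorem would still produce a continuous selection for variants of the constraint set where no closed-form minimizer is apparent, which is the template reused in Appendix \ref{app:continuity} for the multiplier functions. Two small caveats: your reading of ``given $K$'' as ``$K$ superstabilizes all of $\pc$'' is an added hypothesis, but it is the same implicit nonemptiness assumption the paper needs for Michael's theorem, so this is a fair interpretation rather than a gap; and the polynomial refinement you allude to is the lemma immediately following this one in Section \ref{sec:full_method}, not Appendix \ref{app:polynomial_sw} (which concerns the Alternatives multipliers).
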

\begin{proof}
Define $\mathcal{M}: \pc \rightrightarrows \R_+^{n \times n}$ as the set-valued map (solution region to \eqref{eq:super_full_row}-\eqref{eq:super_full_element}):
\begin{equation}
\label{eq:m_map}
    \mathcal{M}(A, B): \begin{Bmatrix}\begin{array}{rc}
      \textstyle \forall i: & \sum_{j} M_{ij} \leq 1-\delta \\
        \forall (i,j): & -M_{ij} \leq \pm (A_{ij}v_j + \sum_{\ell=1}^m B_{i \ell} Y_{\ell j} )
      \end{array} \end{Bmatrix}.
\end{equation}
The right-hand sides of the constraints in \eqref{eq:m_map} are each continuous (linear) functions of $(A, B)$. This ensures that $\mathcal{M}$ is lower semi-continuous (Definition 1.4.2 in \cite{aubin2009set}) under the affine (continuous) changes in $(A, B)$ in the compact domain $\pc$ by  Theorem 2.4 in \cite{mangasarian1987lipschitz} (perturbations of right-hand-sides of linear-inequality-defined regions). Michael's theorem (Proposition 9.3.2 in \cite{aubin2009set}) suffices to show that a continuous selection of $M \in \mathcal{M}(A, B)$ exists, given that $\mathcal{M}$ takes on closed convex values in the Banach space $\R^{n \times n}$, has a compact domain, and is lower-semicontinuous. One such continuous selection is the Minimal Map $M(A, B) = \argmin_{\tilde{M} \in \mathcal{M(A, B)}} \norm{\tilde{M}}_F^2$.
\end{proof}

\begin{lem}
There exists a $\delta' > \delta$ such that a polynomial $M^p(A, B)$ may be chosen for $M(A, B)$ given $K$.
\end{lem}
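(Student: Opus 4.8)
The plan is to start from the continuous selection $M(A,B)$ furnished by Lemma \ref{lem:m_cont_select} and replace it by a uniformly close polynomial, then correct for the approximation error with a small upward shift. Since $\bpc$ (hence $\pc$) is compact by Assumption \ref{assum:compact}, each of the $n^2$ scalar entries $M_{ij}(A,B)$ is a continuous function on a compact subset of $\R^{n\times n}\times\R^{n\times m}$, so by the Stone--Weierstrass theorem there is, for any tolerance $\eta>0$, a polynomial $\hat M_{ij}(A,B)$ with $\sup_{(A,B)\in\pc}\abs{\hat M_{ij}(A,B)-M_{ij}(A,B)}<\eta$. I would then define the candidate entrywise by $M^p_{ij}(A,B)=\hat M_{ij}(A,B)+\eta$, which assembles into a matrix-valued polynomial $M^p(A,B)$.

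The next step is to verify that this shifted approximant still satisfies the two constraint families \eqref{eq:full_nonneg_element} and \eqref{eq:full_nonneg_row}. The upward shift is exactly what repairs the element inequalities: on $\pc$ one has $M^p_{ij}>M_{ij}-\eta+\eta=M_{ij}$, and since $M$ satisfies the element bounds, $M^p_{ij}>M_{ij}\ge \abs{A_{ij}+\sum_{\ell}B_{i\ell}K_{\ell j}}\ge 0$, so both sign constraints and nonnegativity hold strictly. The price is paid in the row sums: each of the $n$ entries in a row is inflated by at most $2\eta$ (the shift plus the approximation error), so $\sum_j M^p_{ij}<\sum_j M_{ij}+2n\eta\le (1-\delta)+2n\eta$, i.e. $M^p$ certifies the level $1-(\delta-2n\eta)$. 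If $K$ superstabilizes every plant in $\pc$ with a margin strictly exceeding $\delta$, that is $\gamma^\ast:=\sup_{(A,B)\in\pc}\norm{A+BK}_\infty<1-\delta$, then the minimal selection already satisfies the row-sum constraint at the strictly larger margin $\delta':=1-\gamma^\ast>\delta$; carrying the same computation through at margin $\delta'$ and choosing $\eta=(\delta'-\delta)/(2n)$ yields $\sum_j M^p_{ij}<1-\delta$, so the polynomial meets the programmed margin. This is precisely where the hypothesis $\delta'>\delta$ enters and is consumed.

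The main obstacle is the coupling between the two constraint families rather than the approximation itself: the element constraints are lower bounds that force the approximant upward, while the row-sum constraint is an upper bound, so the correction made for one family eats into the budget of the other. Controlling this requires a quantitative handle on how much the shift costs per row, which is why a strictly positive slack $\delta'-\delta$ must be available and why the construction cannot in general reach the exact boundary margin. The remaining points are routine: compactness (Assumption \ref{assum:compact}) is what legitimizes Stone--Weierstrass, the constraints are $\dx$-free so it is immaterial whether they are imposed over $\pc$ or over $\bpc$, and the strictness of every resulting inequality leaves the room needed for the subsequent Putinar/Scherer \ac{SOS} certification.
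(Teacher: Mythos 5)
Your argument is correct and follows essentially the same route as the paper's proof: Stone--Weierstrass on the compact set $\pc$, an upward shift of the approximant so that the element-wise lower-bound constraints survive, and a $2n\eta$ accounting of the resulting row-sum inflation. The only difference is bookkeeping --- you place the slack $\delta'>\delta$ on the continuous certificate and deliver the polynomial at margin $\delta$, whereas the paper delivers the polynomial at a modified margin $\delta'$ --- which does not change the substance.
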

\begin{proof}
Let $\epsilon > 0$ be a tolerance such that $\forall (i, j): \sup_{(A, B) \in \pc} \abs{(M_{ij}(A, B) + \epsilon)- M_{ij}^p(A, B)} \leq \epsilon$ by the Stone-Weierstrass theorem in the compact set $\pc$ \cite{stone1948generalized}.  This implies that $M^p(A, B) \geq M(A, B)$ everywhere in $\pc$, because the residual $r_{ij}(A, B) = M^p_{ij}(A, B) - M_{ij}(A, B)$ takes on values between $[0, 2\epsilon]$.
Now consider \eqref{eq:super_full_row} with $M^p$:
\begin{subequations}
\begin{align}
1 - \delta - \textstyle \sum_{j=1}^n M_{ij}^p &= 1 - \delta - \textstyle \sum_{j=1}^n (M_{ij} + r_{ij}) \label{eq:row_r} \\
&\geq 1 - \delta - 2 \epsilon n - \textstyle \sum_{j=1}^n (M_{ij}) \label{eq:row_r2}.
\end{align}
\end{subequations}
For each row $i$, define $Z^*_i$ as
\begin{align}
    Z^*_i &= \sup_{(A, B) \in \pc} \textstyle \sum_{j=1}^n (M_{ij}) \leq 1-\delta,
    \end{align}
    from which it holds via \eqref{eq:super_full_row} that
    \begin{equation}
    \forall i =1..n: \quad 1-\delta - Z^*_i \geq 0 \quad \implies \quad 1 - \delta/2 - Z^*_i > 0. \label{eq:super_full_delta}
\end{equation}

Substituting \eqref{eq:super_full_delta} into \eqref{eq:row_r2} under the condition that \eqref{eq:row_r2} must be nonnegative yields
\begin{equation}
    (1-\delta/2 - Z^*_i) - 2\epsilon n \geq \delta/2 > 0.
\end{equation}

Choosing $\delta' = \delta/2 + 2n \epsilon$ with $\epsilon < (1-\delta/2)/(2n)$ (to ensure that $\delta' < 1$) will certify that $M^p$ satisfies all inequality constraints w.r.t. $\delta'$.
\end{proof}

\subsubsection{Positive Stability}

Positive-stabilization will occur by finding a common \ac{DLCLF} $\max(x./v)$ with a constant vector $v \in \R^n_{>0}$.

\begin{prob}
\label{prob:pos_full}
A formulation for Positive Stabilization of Problem \ref{prob:stable} is
\begin{subequations}
\label{eq:pos_full}
\begin{align}
    \find_{v, S} \qquad & v \in \R^n_{>0}, \quad Y = \diag{v}, \  \quad S \in \R^{m \times n} \\
    & \forall (A, B, \dx) \in \bpc: \nonumber \\
    & \qquad v - (A Y + B S) \1 -\delta \in \R^n_{\geq 0} &  \label{eq:find_lyap}\\
    & \qquad A Y + B S  \in \R^{n\times n}_{\geq 0}.\label{eq:find_pos}
\end{align}
\end{subequations}
The controller is returned by $K = S Y^{-1}$.
\end{prob}

\subsubsection{Quadratic Stabilizability}

Quadratic stabilizability of every plant in $\pc$  according to \eqref{eq:quad_lmi_def} will be enforced (if possible) by a common $K = M Y^{-1}$.

\begin{prob}
\label{prob:quad_full}
A program for Quadratic Stabilization of Problem \ref{prob:stable} is
\begin{align}
\label{eq:quad_full}
   \find_{Y, M} &  \begin{bmatrix}
    Y & A Y + B S \\
    \ast & Y
  \end{bmatrix} \succ 0, & \forall (A,B, \dx) \in \bpc \nonumber \\
  & Y \in \psd^n_{++}, \ S \in \R^{m \times n}.
\end{align}
\end{prob}

The function $x^T Y^{-1} x$ is a common \ac{CLF} for all plants in $\pc$. 

\subsection{SOS Program and Numerical Considerations}
Problems \ref{prob:super_full},  \ref{prob:pos_full} and \ref{prob:quad_full} may each be approximated by \ac{WSOS} polynomials.
\subsubsection{SOS Preliminaries}
We briefly review notation used in defining \ac{WSOS} constraints for the imposition of \acp{PMI} certifying that a symmetric-matrix-valued polynomial is \ac{PSD}. Further detail about these concepts (e.g. optimality bounds, \ac{SDP} complexity) is available in Appendix \ref{app:sos}.

\Iac{BSA} set is defined by a finite number of bounded degree polynomials $\{g_i(x)\}_{i=1}^{N_g}$ and $\{h_j(x)\}_{j=1}^{N_h}$:
\begin{equation}
\mathbb{K} = \{x\in \R^n \mid g_i(x) \geq 0, \ h_j(x) = 0\}. \label{eq:bsa}
\end{equation}

A matrix-valued polynomial $P(x) \in \psd^s[x]$ is \ac{PSD} ($P(x) \in \psd^s_+[x]$) if $\forall x \in \R^n: \ P(x) \in \psd_+^s$. A matrix-valued-polynomial is an \ac{SOS} matrix ($P(x) \in \Sigma^s[x]$) if there exists a vector of polynomials $v(x) \in \R[x]^q$ and a \textit{Gram} matrix $Q \in \psd_+^{q s}$ for some $q \in \N$ with $P(x) = (v(x) \otimes I_s)^T Q (v(x) \otimes I_s).$ The set $\Sigma^s[\mathbb{K}]$ of \ac{WSOS} matrices over \eqref{eq:bsa} is the class of matrices $P(x)$ such that there exists multipliers $\sigma_0(x) \in \Sigma^s[x], \ \sigma_i(x) \in \Sigma^s[x], \ \phi_j \in \psd^s[x]$ with 
\begin{equation}
P(x) = \sigma_0(x) + \textstyle \sum_i {\sigma_i(x)g_i(x)} + \textstyle \sum_j {\phi_j(x) h_j(x)}. \label{eq:psatz_noeps}    
\end{equation}
The set $\K$ satisfies an \textit{Archiemedean condition} if there exists an $R > 0$ such that  $R - \norm{x}_2^2 \in \Sigma^1[\K]$. Every \ac{PD}-valued matrix polynomial $P(x)$ over an Archimedean $\K$ satisfies $P(x) - \varepsilon I_q \succeq 0$ for some $\varepsilon > 0$ (Theorem 2 of \cite{scherer2006matrix}, Scherer Psatz). The set $\Sigma^s[\mathbb{K}]$ is a subset of the set of matrix-valued-polynomials in $x$ that are \ac{PSD} over $\K$.

\subsubsection{Superstability SOS}

\ac{SOS} methods may be used to approximate the superstability Program  of \eqref{eq:super_full} by requiring that $M(A, B) \in (\R[A, B, \dx])^{n \times n}$ is a polynomial matrix of degree $2d$.

Define $q^{\textrm{row}}_i(A, B, \dx; v, S) $ as the left hand side of \eqref{eq:super_full_row}, and let $q^\pm_{ij}(A, B, \dx; v, S) $  be the left hand side of each constraint in \eqref{eq:super_full_element}. An example constraint from \eqref{eq:super_full_element} at $(i, j)$ may be written as

    \begin{equation}
    \label{eq:full_cone_nonneg}
        q^+_{ij}(A, B, \dx; v, S) = M_{ij}(A, B, \dx) -\left(A_{ij}v_j + \textstyle \sum_{\ell=1}^m B_{i \ell}S_{\ell j}\right). \nonumber
    \end{equation} 


Equation \eqref{eq:super_full_wsos} expresses the degree-$2d$ \ac{WSOS} tightening of Problem \ref{prob:super_full}, returning a controller $K = S \diag{1./v}$ if feasible:

\begin{subequations}
\label{eq:super_full_wsos}
 \begin{align}
    \find_{v, S, M} \ & v \in \R^n_{>0}, \ S \in \R^{n \times m} \\
    & M \in \R[A,B]_{\leq 2d} \\
    & q^{row}_i \in \Sigma[\bpc]_{\leq 2d} & & \forall i \in 1..n \label{eq:super_full_wsos_put_row}\\
    & q^{\pm}_{ij} \in \Sigma[\bpc]_{\leq 2d} & & \forall i,j \in 1..n. \label{eq:super_full_wsos_put_pm}
\end{align}
\end{subequations}

There are $2n^2+n$ nonnegativity constraints in Program \eqref{eq:super_full_wsos}, each requiring a degree-$2d$ \ac{WSOS} Psatz of \eqref{eq:putinar}. Each Psatz involves 
$n(n+m+T)$ variables $(A, B, \dx)$, which induces a Gram matrices of maximal size $\binom{n(n+m+T) + d}{d}$ at degree $d$.

\begin{thm}
\label{thm:ss_full_converge}
When all sets are Archimdean (assumption \ref{assum:compact}),
Program \eqref{eq:super_full_wsos} will recover a superstabilizing $K$ (if possible) solving as $d\rightarrow \infty$.
\end{thm}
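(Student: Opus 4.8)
The plan is to show that the existence of a superstabilizing controller forces the \ac{WSOS} feasibility program \eqref{eq:super_full_wsos} to become feasible at some finite degree $d^\ast$, after which monotonicity of the \ac{WSOS} cones keeps it feasible for all larger $d$. Suppose therefore that a superstabilizing $K^\ast$ exists, so that $\norm{A+BK^\ast}_\infty < 1$ for every $(A,B)\in\pc$. Since $\pc$ is compact under Assumption \ref{assum:compact} and $(A,B)\mapsto\norm{A+BK^\ast}_\infty$ is continuous, there is a uniform margin $\delta_0>0$ with $\norm{A+BK^\ast}_\infty \le 1-\delta_0$ on all of $\pc$; I would run the algorithm with $\delta<\delta_0$.

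The crux is to produce a single polynomial matrix $M^p(A,B)$ for which all $2n^2+n$ constraint polynomials of \eqref{eq:full_nonneg} are \emph{strictly} positive on $\bpc$. First pad the selection guaranteed by Lemma \ref{lem:m_cont_select}: the explicit continuous choice $M_{ij}(A,B)=\abs{A^{cl}_{ij}}+\eta$, with $A^{cl}=A+BK^\ast$ and a small $\eta>0$, satisfies the defining inequalities of \eqref{eq:m_map} and gives $q^{\pm}_{ij}=M_{ij}\mp A^{cl}_{ij}\ge\eta>0$, while $\eta<(\delta_0-\delta)/n$ yields $q^{\textrm{row}}_i = 1-\delta-\sum_j M_{ij}\ge \delta_0-\delta-n\eta>0$; both bounds are uniform over the compact $\pc$. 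This $M$ is continuous but not polynomial, so I would invoke the subsequent polynomial-approximation lemma (Stone--Weierstrass on the compact $\pc$) to replace it by a polynomial $M^p$. Because that approximation returns $M^p\ge M$ pointwise with residual at most $2\epsilon$, the element constraints are preserved automatically, $q^{\pm}_{ij}(M^p)\ge q^{\pm}_{ij}(M)\ge\eta$, and the row constraints lose at most $2n\epsilon$, so choosing $\epsilon$ small keeps every constraint polynomial strictly positive on $\bpc$.

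Next I would invoke the Putinar Psatz. The constraint polynomials depend only on $(A,B)$, and positivity on $\pc=\pi^{A,B}\bpc$ immediately implies positivity on all of $\bpc$; this reduction is precisely why one certifies over the \ac{BSA}, Archimedean lift $\bpc$ rather than over the projected (possibly non-\ac{BSA}) set $\pc$, whose describing polynomials are unavailable. Archimedeanness of $\bpc$ (Assumption \ref{assum:compact}) together with Theorem 1.3 of \cite{putinar1993compact} then guarantees that each of the $2n^2+n$ strictly positive polynomials admits a finite-degree Putinar/\ac{WSOS} certificate, i.e.\ lies in $\Sigma[\bpc]_{\le 2d}$ for all sufficiently large $d$. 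Taking $d^\ast$ to be the maximum of these finitely many degrees makes $(K^\ast,M^p)$, together with the associated multipliers, a feasible point of \eqref{eq:super_full_wsos} at degree $d^\ast$.

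Finally, since the cones $\Sigma[\bpc]_{\le 2d}$ are nested and nondecreasing in $d$, feasibility at $d^\ast$ persists for every $d\ge d^\ast$, so Algorithm \ref{alg:full} returns some $K$ satisfying \eqref{eq:full_nonneg}; by the superstability criterion \eqref{eq:superstable} this $K$ superstabilizes every plant in $\pc$. I expect the main obstacle to be the strict-positivity requirement imposed by Putinar: the unpadded selection makes the element constraints $q^{\pm}_{ij}$ vanish exactly where $M_{ij}=\abs{A^{cl}_{ij}}$, so the $\eta$-padding and the margin bookkeeping of the two preceding lemmas are essential, as is the reduction of positivity over the projection $\pc$ to positivity over the certifiable Archimedean lift $\bpc$.
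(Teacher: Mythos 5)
Your proposal is correct, but it takes a genuinely different route from the paper. The paper's own proof is a two-line appeal to the general convergence theory of the moment-\ac{SOS} hierarchy: it recasts \eqref{eq:full_nonneg} as a feasibility \ac{POP} with optimal value $p^\ast=0$, notes that the degree-$d$ lower bounds are either $0$ or $-\infty$, and invokes $\lim_{d\to\infty}p_d^\ast=p^\ast$ under Assumption \ref{assum:compact}. You instead give a constructive argument: pad the explicit selection $M_{ij}=\abs{A^{cl}_{ij}}+\eta$ to obtain uniform strict positivity of all $2n^2+n$ constraint polynomials on the compact $\pc$, pass to a polynomial $M^p$ via Stone--Weierstrass with the same margin bookkeeping as the paper's approximation lemma, lift positivity from $\pc$ to the Archimedean $\bpc$ (legitimate since the constraints depend only on $(A,B)$ and $\pc=\pi^{A,B}\bpc$), apply Putinar's theorem to get a finite certificate degree $d^\ast$, and conclude by nestedness of the truncated \ac{WSOS} cones. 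Your version buys more: it exhibits an explicit strictly feasible point $(K^\ast,M^p)$ and makes precise why a Slater-type margin exists, which is exactly the step the paper's appeal to \ac{POP} convergence glosses over (the program is a robust feasibility problem in the design variables $(K,M)$, not literally a minimization of a fixed polynomial over an Archimedean set). The paper's version is shorter and generalizes immediately to the quadratic-stability case via Scherer's Psatz. The only caveat worth flagging in your write-up is that the margin $\delta$ is an input to Algorithm \ref{alg:full} while $\delta_0$ depends on the unknown $K^\ast$, so the guarantee implicitly requires $\delta$ to be chosen below $\delta_0$; this caveat is equally present in the paper's statement.
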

\begin{proof}
This theorem follows from results on convergence of \acp{POP}. Applying the \ac{SOS} hierarchy to a \ac{POP} $p^* = \min_{x \in \K} p(x)$ for $\K$ Archimedean will result in a convergent sequence of lower bounds $p^*_d \leq p^*_{d+1} \ldots$ with $\lim_{d \rightarrow \infty}p_d^* = p^*$. The \ac{POP} realization of nonnegativity program \eqref{eq:super_full} is a feasibility problem with $p(x) = p^* = 0$. The lower bounds of the \ac{SOS} hierarchy will therefore take a value of $p_d^* = 0$ (feasible, superstabilizing $K$ found) or $p^*_d = -\infty$ (dual infeasible, superstabilizing $K$ not found). By the limit property of $\lim_{d \rightarrow \infty}p_d^* = p^*$ under the Archimedean assumption  \ref{assum:compact}, a superstabilizing $K$ will be found if possible as the degree $d$ increases. 
\end{proof}

\subsubsection{Positive SOS}

The positive stabilization \ac{SOS} program at degree $d$ with respect to a tolerance $\delta>0$ is
\begin{subequations}
\label{eq:pos_sos}
\begin{align}
    \find_{v, S} \qquad & v \in \R^n_{>0},  \quad S \in \R^{m \times n} \\
    &\textstyle  v_i  - \left(\sum_{j=1}^n\sum_{\ell=1}^m B_{i\ell} S_
    {\ell j}\right) \nonumber \\
    &\textstyle \qquad - v_i \left(\sum_{j=1}^n A_{ij} \right) -\delta \in \Sigma[\bpc]_{\leq 2d} & & \forall i \in 1..n \label{eq:find_lyap_sos}\\
    & \textstyle v_i A_{ij} + \sum_{\ell=1}^m B_{i\ell} S_{\ell j} \in \Sigma[\bpc]_{\leq 2d} & & \forall i, j \in 1..n.  \label{eq:find_pos_sos}
\end{align}
\end{subequations}
The controller is recovered by $K=S Y^{-1}$ if \eqref{eq:pos_sos} is feasible.
Program \eqref{eq:pos_sos} involves $n^2+n$ \ac{WSOS} Putinar Psatz
 constraints, each of degree $2d$. 
Just like in the Superstability case, the maximum Gram matrix size of \eqref{eq:pos_sos} is $\binom{n(n+m+T)+d}{d}.$
 
\begin{thm}
Program \eqref{eq:pos_sos} will recover a Positive stabilizing $K$ (if possible) as  $d\rightarrow \infty$ under the Archimedean assumption \ref{assum:compact}.
\end{thm}
\begin{proof}
This follows directly from the proof of Theorem \ref{thm:ss_full_converge}. All constraints in \eqref{eq:pos_sos} involve scalar polynomials, and do not require \acp{PMI} of size $2$ or greater.
\end{proof}
 



\subsubsection{Quadratic SOS}

The quadratic stabilizability \ac{SOS} program is described in Equation \eqref{eq:quad_sos}, yielding a recovered controller $K = S Y^{-1}$ (if feasible):
\begin{align}
 \label{eq:quad_sos}
   \find_{Y, S} &  \begin{bmatrix}
    Y & A Y + B S \\
    \ast & Y
  \end{bmatrix} \in \Sigma^{2n}[\bpc]_{\leq 2d} \nonumber \\
  & Y \in \psd^n_{++}, \ S \in \R^{m \times n}.
\end{align}

Problem \eqref{eq:quad_sos} involves a \ac{PMI} constraint for a matrix of size $2n$ and a \ac{PD} constraint of size $n$. The \ac{PMI} contains $n(n+m+T)$ variables $(A, B, \dx)$. 
The maximal Gram matrix size as induced by  the degree-$d$ Scherer Psatz \eqref{eq:scherer_variables} is $2n \binom{n(n+m+T)+d}{d}$.

\begin{thm}
Program \eqref{eq:quad_sos} will recover a Quadratically stabilizing $K$ (if one exists) as  $d\rightarrow \infty$ under the Archimedean assumption \ref{assum:compact}.
\end{thm}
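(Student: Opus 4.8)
The plan is to establish this theorem as the matrix-valued analogue of Theorem \ref{thm:ss_full_converge}, replacing the scalar Putinar Positivstellensatz with the Scherer Psatz \eqref{eq:scherer}. The enabling fact is the one recalled in Section \ref{sec:pmi}: over an Archimedean \ac{BSA} set, the Scherer representation is \emph{necessary and sufficient} to certify strict positive definiteness of a symmetric-matrix-valued polynomial (Remark 2 and Equation 9 of \cite{scherer2006matrix}). Assumption \ref{assum:compact} renders $\bpc$ Archimedean, so this equivalence is available, and it is what converts an existence statement about a quadratically stabilizing controller into a finite-degree feasibility statement for Algorithm \ref{alg:quad_full}.

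First I would record soundness, which justifies the word ``recover'': any feasible point of the \ac{SDP} \eqref{eq:quad_sos} at any degree $d$ already yields a valid controller. On $\bpc$ the inequality-constraint polynomials are nonnegative while the equality-constraint polynomials vanish, so membership of the block matrix in $\Sigma^{2n}[\bpc]_{\leq 2d}$ forces the Scherer certificate to reduce to $\sigma_0 + \sum_i \sigma_i g_i + \varepsilon I_{2n} \succeq \varepsilon I_{2n} \succ 0$ (using $\sigma_0, \sigma_i \succeq 0$ and $g_i \geq 0$). Hence $P(A,B) \succ 0$ for all $(A,B) \in \pc = \pi^{A,B}\bpc$, so the returned $(Y,S)$ solves Problem \ref{prob:quad_nonneg} and $K = S Y^{-1}$ quadratically stabilizes every consistent plant.

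For completeness, suppose such a $K$ exists, i.e.\ Problem \ref{prob:quad_nonneg} admits a witness $(Y^*, S^*)$ with $Y^* \in \psd^n_{++}$ and $P(A,B) \succ 0$ for all $(A,B) \in \pc$. Since $P$ is $\dx$-independent and $\pc = \pi^{A,B}\bpc$, the same strict inequality holds on $\bpc$; continuity of $(A,B,\dx) \mapsto \lambda_{\min}(P(A,B))$ together with compactness of $\bpc$ then yields a \emph{uniform} margin $P \succeq c\, I_{2n}$ with $c > 0$. Strict positivity on the Archimedean set $\bpc$ now invokes the Scherer Psatz: there is a finite degree $d^*$ with $P \in \Sigma^{2n}[\bpc]_{\leq 2d^*}$. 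For the \emph{fixed} witness $(Y^*, S^*)$ the left-hand side is a fixed matrix polynomial in $(A,B,\dx)$, so the coefficient-matching equations are linear in the Scherer multipliers, and $(Y^*, S^*)$ together with those multipliers is a genuine feasible point of Algorithm \ref{alg:quad_full} at degree $d^*$. Nestedness $\Sigma^{2n}[\bpc]_{\leq 2d} \subseteq \Sigma^{2n}[\bpc]_{\leq 2(d+1)}$ propagates feasibility to all $d \geq d^*$, so the algorithm returns a quadratically stabilizing controller as $d \to \infty$.

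The hard part is manufacturing the uniform definiteness margin $c > 0$ on $\bpc$ from the merely pointwise strict inequality hypothesized in Problem \ref{prob:quad_nonneg}; this is exactly where Assumption \ref{assum:compact} is indispensable, since without it $\lambda_{\min}(P)$ could tend to $0$ and no finite-degree Scherer certificate need exist. A secondary point worth verifying is that the joint search over $(Y,S)$ and the multipliers stays an \ac{SDP}: although $P$ is bilinear in $\bigl((A,B),(Y,S)\bigr)$, the entries of $P$ are \emph{affine} in the decision variables $(Y,S)$, so the map assembling the Scherer identity is linear and feasibility at each $d$ is a bona fide \ac{SDP}; the interior point $Y^* \succ 0$ meets the strict constraint with no regularization required.
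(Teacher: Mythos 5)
Your proof is correct and takes the same route the paper intends: the paper's own proof is a one-line reference to Theorem \ref{thm:ss_full_converge} ``as modified for the Scherer Psatz,'' and your argument is exactly that modification spelled out --- soundness of any feasible point of \eqref{eq:quad_sos}, a uniform definiteness margin from compactness of $\bpc$, necessity of the Scherer representation over the Archimedean set, and nestedness of the truncated cones $\Sigma^{2n}[\bpc]_{\leq 2d}$. There are no gaps; if anything your write-up supplies the details (in particular the reduction to a fixed witness $(Y^*,S^*)$ and the affine dependence of $P$ on $(Y,S)$) that the paper leaves implicit.
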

\begin{proof}
This follows from the proof of Theorem \ref{thm:ss_full_converge} as modified for the Scherer Psatz in \cite{scherer2006matrix}.
\end{proof}

 
\section{Alternatives Program}
\label{sec:altern}

This section will formulate and use a Matrix Theorem of Alternatives in order to reduce the computational expense of running Algorithms \eqref{eq:super_full_wsos}, \eqref{eq:pos_sos}, and \eqref{eq:quad_sos}. The cost savings are derived from elimination of the affine-entering noise variables $\dx$. 
\subsection{Theorem of Alternatives}
Let $q: \R^{n\times n} \times \R^{n \times m} \rightarrow \psd^s$ be a symmetric-matrix valued function satisfying the constraint
\begin{align}
    q(A, B) &\in \psd_{++}^s & \forall (A, B) \in \pc.
    \label{eq:altern_nonneg}
\end{align}

If constraint \eqref{eq:altern_nonneg} is satisfied (feasible), then the following problem is infeasible:
\begin{align}
    \find_{(A, B) \in \pc} & -\lambda_{\min}(q(A,B)) \geq 0. \label{eq:altern_eig}
\end{align}

\begin{lem}
\label{lem:strong_altern}
Constraints \eqref{eq:altern_nonneg} and \eqref{eq:altern_eig} are strong alternatives (either one or the other is feasible).
\end{lem}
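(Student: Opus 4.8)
The plan is to read the two systems as exact logical complements of one another and then verify that, with the inequalities posed as they are, no common gap can occur. Recall that a symmetric matrix $q(A,B)$ lies in $\psd_{++}^s$ precisely when $\lambda_{\min}(q(A,B)) > 0$; hence the complement of the open cone $\psd_{++}^s$ inside $\psd^s$ is exactly the closed set $\{M \in \psd^s \mid \lambda_{\min}(M) \leq 0\}$. Under this identification, feasibility of \eqref{eq:altern_nonneg} is the universal statement ``$\lambda_{\min}(q(A,B)) > 0$ for all $(A,B) \in \pc$,'' while feasibility of \eqref{eq:altern_eig} is the existential statement ``$\lambda_{\min}(q(A,B)) \leq 0$ for some $(A,B) \in \pc$.'' These are manifestly negations of each other, so the strong-alternative claim reduces to making the two implications explicit.

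First I would record the weak direction, already established in the discussion preceding the lemma: if \eqref{eq:altern_nonneg} holds then $\lambda_{\min}(q(A,B)) > 0$, hence $-\lambda_{\min}(q(A,B)) < 0$, at every $(A,B) \in \pc$, so \eqref{eq:altern_eig} admits no feasible point and the two cannot both be feasible. The remaining direction is that they cannot both be infeasible. Suppose \eqref{eq:altern_nonneg} fails; then, by definition, there exists some $(A^\star, B^\star) \in \pc$ with $q(A^\star, B^\star) \notin \psd_{++}^s$, i.e. $\lambda_{\min}(q(A^\star,B^\star)) \leq 0$, equivalently $-\lambda_{\min}(q(A^\star,B^\star)) \geq 0$. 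This $(A^\star, B^\star)$ is exactly a feasible point of \eqref{eq:altern_eig}. Combining the two directions yields that precisely one of \eqref{eq:altern_nonneg} and \eqref{eq:altern_eig} is feasible.

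The step that deserves the most care — and the reason this pair is a \emph{strong} rather than merely weak alternative — is the choice of the non-strict inequality $-\lambda_{\min}(q(A,B)) \geq 0$ in \eqref{eq:altern_eig}. Had \eqref{eq:altern_eig} instead been posed with a strict inequality $-\lambda_{\min}(q(A,B)) > 0$, then at a point where $\lambda_{\min}(q(A,B)) = 0$ exactly (the matrix positive semidefinite but singular) the failure of \eqref{eq:altern_nonneg} would witness neither system, opening a gap closable only under an extra regularity or constraint-qualification hypothesis. Because the negation of the open condition $\succ 0$ is precisely the closed condition $\lambda_{\min} \leq 0$, the two systems partition all possibilities with no boundary case left over. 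I would also note explicitly that, in contrast to generic theorems of alternatives, this argument requires neither continuity of $q$ nor compactness of $\pc$ (Assumption \ref{assum:compact}): those hypotheses are reserved for the subsequent SOS certification of infeasibility of \eqref{eq:altern_eig}, not for the alternative itself.
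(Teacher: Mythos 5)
Your proposal is correct and takes essentially the same route as the paper: both arguments observe that feasibility of \eqref{eq:altern_nonneg} (the universal statement $\lambda_{\min}(q(A,B))>0$ on $\pc$) and feasibility of \eqref{eq:altern_eig} (the existential statement $\lambda_{\min}(q(A,B))\leq 0$ for some point of $\pc$) are exact logical negations, so precisely one holds. Your added remarks on the role of the non-strict inequality and the non-necessity of compactness are accurate but not needed for the proof itself.
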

\begin{proof}
If \eqref{eq:altern_nonneg} is feasible, then $\lambda_{\min} q(A, B)$ is positive for all $(A, B) \in \pc$. Therefore, there does not exist an $(A, B)$ such that $\lambda_{\min}(A, B) \leq 0$, which is the statement of \eqref{eq:altern_eig}. On the opposite side, feasibility of \eqref{eq:altern_eig} with an $(A', B'): \ \lambda_{\min}(q(A, B)) \leq 0$ implies that $q(A, B) \not\in \psd_{++}^s$, and therefore \eqref{eq:altern_nonneg} is infeasible. Additionally, there is no case where \eqref{eq:altern_nonneg} and \eqref{eq:altern_eig} are both infeasible: either all $q(A, B)$ are \ac{PD} \eqref{eq:altern_nonneg} or there exists a non-\ac{PD} counterexample \eqref{eq:altern_eig}.
\end{proof}


 A set of dual variable functions $(\mu(A, B), \zeta(A, B))$ 
 may be defined based on the constraint description from \eqref{eq:pcbar}:
 \begin{subequations}
 \label{eq:mu_zeta_def}
 \begin{align}
     \mu_{ti}:&   \pc \rightarrow \psd^s & & \forall i=1..n, \  t=1..T-1 \\
     \zeta_{ti}^\pm: &   \pc \rightarrow \psd^s_+ & & \forall i=1..n, \ t=1..T.
 \end{align}
 \end{subequations}
 The multipliers from \eqref{eq:mu_zeta_def}, the Scherer Psatz \eqref{eq:scherer}, and the Robust Counterpart method of \cite{ben2015deriving} can  together be used to form the weighted sum $\Phi(A, B; \zeta^\pm, \mu): \pc \rightarrow \psd^s$ with

\begin{align}
    \Phi =& -q(A, B) + \textstyle \sum_{t=1,i=1}^{T, n} \left(\zeta_{ti}^+(\epsilon - \dx_t) +\zeta^-_{ti}(\epsilon + \dx_{t,i})\right) \nonumber \\
    &+\textstyle\sum_{t=1,i=1}^{T-1,n} \mu_{ti}(-\dx_{t+1, i} + A_i \dx_{ti} + h_{ti}^0). \label{eq:Sij}
\end{align}

The dual multipliers will always be treated as (possibly nonunique and discontinuous) functions $\mu(A, B)$ or $\zeta(A, B)$, but their $(A, B)$ dependence may be omitted to  condense notation. 

\begin{thm}
\label{thm:sup_weak}
A sufficient condition for infeasibility of program \eqref{eq:altern_eig} is if there exists multipliers $\zeta^\pm, \mu$ according to \eqref{eq:mu_zeta_def} such that
\begin{align}
    \forall(A,B)\in \pc: \ \sup_{\dx \in \R^{n \times T}} \lambda_{\rm{max}}(\Phi(A,B; \zeta^\pm, \mu)) < 0. \label{eq:altern_sup}
\end{align}
\end{thm}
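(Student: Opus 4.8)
The plan is to establish sufficiency directly via a matrix weak-duality argument: I will show that hypothesis \eqref{eq:altern_sup} forces the primal certificate \eqref{eq:altern_nonneg} to hold, after which Lemma \ref{lem:strong_altern} immediately yields infeasibility of \eqref{eq:altern_eig}. So it suffices to prove that $q(A,B) \in \psd^s_{++}$ for every $(A,B) \in \pc$.

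First I would fix an arbitrary $(A,B) \in \pc$ and use the projection definition \eqref{eq:pc} to extract a witness $\dx^* \in \R^{n\times T}$ with $(A,B,\dx^*) \in \bpc$. On this witness each describing constraint of $\bpc$ from \eqref{eq:pcbar} has a definite sign: the box constraints give $\epsilon - \dx^*_{t,i} \geq 0$ and $\epsilon + \dx^*_{t,i} \geq 0$, while the $i$-th component of the affine dynamics residual satisfies $-\dx^*_{t+1,i} + (A\dx^*_t)_i + h^0_{t,i} = 0$.

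Next I would evaluate $\Phi$ from \eqref{eq:Sij} at $(A,B,\dx^*)$ and read off its sign in the Loewner order. The $\mu$-weighted terms vanish because they multiply the equality residuals, which are zero on $\bpc$ (so the sign-indefiniteness of $\mu_{ti} \in \psd^s$ is harmless). Each $\zeta$-weighted term is a product of a \ac{PSD} multiplier $\zeta^\pm_{ti} \in \psd^s_+$ with a nonnegative scalar, hence \ac{PSD}, and summing \ac{PSD} terms preserves this. Therefore $\Phi(A,B,\dx^*) + q(A,B) \succeq 0$, i.e.\ $q(A,B) \succeq -\Phi(A,B,\dx^*)$. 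Invoking the hypothesis, $\lambda_{\max}(\Phi(A,B,\dx^*)) \leq \sup_{\dx}\lambda_{\max}(\Phi) < 0$, so $\Phi(A,B,\dx^*) \prec 0$ and thus $-\Phi(A,B,\dx^*) \succ 0$; chaining with the previous inequality gives $q(A,B) \succ 0$. Since $(A,B) \in \pc$ was arbitrary, constraint \eqref{eq:altern_nonneg} holds, and Lemma \ref{lem:strong_altern} delivers infeasibility of \eqref{eq:altern_eig}.

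I expect the only genuine subtlety to be the bookkeeping that couples the robust $\sup_{\dx}$ in the hypothesis with the single witness $\dx^*$ produced by the projection: the supremum being strictly negative is exactly what guarantees negativity at the (a priori unknown) $\dx^*$, and this is precisely the mechanism that lets $\dx$ be eliminated from the consistency description. The remaining matrix arithmetic---that a \ac{PSD} matrix times a nonnegative scalar stays \ac{PSD}, and the Loewner sandwich $-\Phi(A,B,\dx^*) \succ 0$ together with $q(A,B) \succeq -\Phi(A,B,\dx^*)$---is routine, but must be carried out carefully to keep the signs of the two $\zeta^\pm$ terms aligned with the two sides of the box constraint $\norm{\dx_t}_\infty \leq \epsilon$.
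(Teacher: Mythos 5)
Your proposal is correct and follows essentially the same route as the paper's proof: evaluate $\Phi$ at any noise witness $\dx^*$ consistent with $(A,B)$, observe that the $\mu$-weighted equality residuals vanish and the $\zeta^\pm$-weighted box terms are \ac{PSD} so that $\Phi \succeq -q$ (equivalently $\lambda_{\max}(\Phi) \geq \lambda_{\max}(-q)$), and conclude $q(A,B) \succ 0$ from the strictly negative supremum. The only cosmetic difference is that you route the final step through Lemma \ref{lem:strong_altern} and the Loewner order rather than stating the eigenvalue inequality directly; the substance is identical.
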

\begin{proof}
This theorem holds by arguments from \cite{ben2015deriving, boyd2004convex} with modifications for the matrix case.

Any point $(A, B, \dx) \in \bpc$ must satisfy $\norm{\dx_t}_\infty \leq \epsilon$ for all times $t=1..T$ \eqref{eq:pcbar}. This implies that $\epsilon \pm  \dx_t$
are nonnegative vectors for each time $t$ for $\dx \in \pi^{\dx} \bpc$, and therefore $\zeta^\pm_{ti}(\epsilon \pm \dx_{ti})$ are each \ac{PSD} matrices given that $\zeta^\pm_{ti}$ are \ac{PSD}. Additionally, the data consistency constraints in $\dx_{t+1} + A\dx_{t} + h_t^0$ from \eqref{eq:pcbar} evaluate to 0 for $(A, B, \dx) \in \bpc$, so $\mu$ times these zero quantities result in a zero matrix. The addition of these \ac{PSD} and Zero multiplier terms to $-q$ ensures that $\lambda_{\max}(\Phi) \geq \lambda_{\max}(-q)$. Finding multipliers $(\zeta^\pm, \mu)$ such that \eqref{eq:altern_sup} holds therefore implies that $-q$ is Negative Definite ($q$ is \ac{PD}) over the space $\pc$. This definiteness statement certifies infeasibility of \eqref{eq:altern_eig}, because there cannot exist a negative eigenvalue of $q$ as $(A, B)$ ranges over $\pc$.

\end{proof}
\begin{thm}
\label{thm:sup_strong} Theorem \ref{thm:sup_weak} is additionally necessary for infeasibility of \eqref{eq:altern_eig} in the case where $s=1$ ($q(A, B)$ is scalar).
\end{thm}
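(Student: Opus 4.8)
The plan is to prove necessity by a pointwise LP-duality (theorem of alternatives) argument, exploiting that $s=1$ collapses the matrix relaxation underlying Theorem \ref{thm:sup_weak} into a lossless linear-programming duality. I assume \eqref{eq:altern_eig} is infeasible, which in the scalar case means $q(A,B) > 0$ for every $(A,B)\in\pc$. I must produce multiplier functions $\zeta^\pm,\mu$ as in \eqref{eq:mu_zeta_def} so that $\sup_{\dx}\Phi(A,B;\zeta^\pm,\mu) < 0$ holds at every $(A,B)\in\pc$; since the statement only asserts existence of such functions (continuity and polynomiality being deferred to Appendices \ref{app:continuity} and \ref{app:polynomial_sw}), it suffices to construct them pointwise.

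First I would fix an arbitrary $(A,B)\in\pc$. With $s=1$, the quantity $\Phi$ in \eqref{eq:Sij} is a scalar that is \emph{affine} in the unknown $\dx$ for this fixed $(A,B)$, and it is precisely the Lagrangian of the linear program
\[
\max_{\dx}\ \bigl(-q(A,B)\bigr)\quad\text{s.t.}\quad (A,B,\dx)\in\bpc,
\]
with $\zeta^{\pm}_{ti}\ge 0$ serving as the multipliers of the box constraints $\epsilon\pm\dx_{t,i}\ge 0$ from \eqref{eq:pcbar} and $\mu_{ti}$ as the free multipliers of the affine data-consistency equalities. Because $(A,B)\in\pc$, the $\dx$-feasible set $\{\dx : (A,B,\dx)\in\bpc\}$ is nonempty, and because the bounds $\norm{\dx_t}_\infty\le\epsilon$ make it compact, this LP is feasible and bounded with optimal value exactly $-q(A,B)$.

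Next I would identify $\sup_{\dx}\Phi(A,B;\zeta^\pm,\mu)$ with the dual function of this LP: maximizing the affine $\Phi$ over $\dx\in\R^{n\times T}$ returns $+\infty$ unless the $\dx$-linear coefficients vanish (which is dual feasibility), in which case it equals the constant term $-q(A,B)+\epsilon\sum_{t,i}(\zeta^+_{ti}+\zeta^-_{ti})+\sum_{t,i}\mu_{ti}h^0_{ti}$. Strong LP duality then guarantees the dual is solvable with no gap, so there exist $\zeta^{\pm}\ge 0,\ \mu$ attaining dual value $-q(A,B)$. Since $q(A,B)>0$, this yields $\sup_{\dx}\Phi(A,B;\zeta^\pm,\mu)=-q(A,B)<0$, i.e. \eqref{eq:altern_sup} at $(A,B)$. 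Assigning these optimizers to $\zeta^\pm(A,B),\mu(A,B)$ for each $(A,B)\in\pc$ produces the required multiplier functions.

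The step I expect to carry the weight is the appeal to strong LP duality, and in particular articulating why scalarity is indispensable: the gap-free, lossless character of linear-programming duality (equivalently the Farkas/Motzkin alternatives) is exactly what upgrades the \emph{sufficient} condition of Theorem \ref{thm:sup_weak} into a \emph{necessary} one. For $s>1$ the analogous elimination of $\dx$ from the matrix inequality $\Phi\preceq 0$ must pass through the matrix robust-counterpart construction of \cite{ben2015deriving}, which only furnishes an SDP-tight (hence generally conservative) certificate; the exactness argument above breaks down because there is no lossless S-procedure for more than one matrix-valued constraint. A secondary point to handle with care is that the $\zeta^\pm,\mu$ obtained here are defined pointwise and may be discontinuous in $(A,B)$; this is acceptable for the present statement, and the refinement to continuous and then polynomial multipliers is precisely the content of the appendices.
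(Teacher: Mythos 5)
Your proposal is correct and follows essentially the same route as the paper: the paper's proof also rests on the observation that for $s=1$ the constraints of $\bpc$ are affine (hence concave) in $\dx$ and that $q$ is $\dx$-independent, so scalar convex duality yields strong alternatives; your pointwise LP-duality construction is just the explicit, worked-out form of that citation to convex duality, including the correct identification of $\sup_{\dx}\Phi$ with the dual function and the observation that discontinuous pointwise multipliers suffice here. No gaps.
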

\begin{proof}
The term $\lambda_{\min}(q)$ is replaced by $q$ in the scalar case of \eqref{eq:altern_eig}.
The constraints in \eqref{eq:pcbar} are affine in $\dx$, which means they are both convex and concave in the variable $\dx$. The term $q(A, B)$ is also independent of $\dx$. Concavity of the constraints in $\dx$ is enough to establish necessity and strong alternatives by convex duality (Section 5.8 of \cite{boyd2004convex}).
 \end{proof}
 
\begin{rem}
 Refer to \cite{ben2002interval} for an example where the Alternatives procedure \eqref{eq:altern_sup} with $s>1$ is sufficient but not necessary (robust \acp{SDP} over interval matrices: ``computationally tractable conservative approximation'').
The work in \cite{zhen2022robust} formulates robust \acp{SDP} involving polytopic uncertainty as a generally intractable two-stage optimization program.
\end{rem}
Equation \eqref{eq:phi_sum} can be simplified and transformed into a feasibility program by explicitly defining and constraining its $\dx$-supremal value.
The sum $\Phi$ is affine in $\dx$, and the constant terms (in $\dx$) of $\Phi$ are $Q(A, B; \zeta^\pm, \mu)$:
\begin{align}
\label{eq:altern_const}
    Q &= -q(A, B) + \textstyle \sum_{t=1,i=1}^{T, n} \epsilon \left(\zeta_{ti}^+ + \zeta^-_{ti}\right) +\textstyle\sum_{t=1,i=1}^{T-1,n} \mu_{ti}h^{0}_{ti}.
\end{align}

The sum $\Phi$ may therefore be expressed as
\begin{align}
\label{eq:phi_sum}
    \Phi =& Q + \textstyle\sum_{t=1,i=1}^{T-1,n} \mu_{ti}A_i \dx_{ti} - \textstyle\sum_{t=2,i=1}^{T,n} \mu_{t-1,i}\dx_{ti} \nonumber\\
    &+ \textstyle\sum_{t=1,i=1}^{T, n} (\zeta^-_{ti} - \zeta^+_{ti})\dx_{ti}.
\end{align}

The supremal value of $\Phi$ in \eqref{eq:altern_sup} given $(A, B; \zeta^\pm, \mu)$ is
\begin{equation}
    \label{eq:dual_function_quad}
    \sup_{\dx} \lambda_{max} (\Phi)  = \begin{cases}
     \lambda_{\rm{max}}(Q) &   \zeta_{1i}^+ - \zeta^-_{1i} = \textstyle \sum_{j=1}^n A_{ji}  \mu_{1j} \\
      & \zeta_{ti}^+ - \zeta^-_{ti}= \textstyle \sum_{j=1}^n A_{ji}  \mu_{tj}   -  \mu_{t-1,i}\\
      & \zeta_{Ti}^+- \zeta^-_{Ti} = - \mu_{T-1, i} \\
     \infty & \textrm{otherwise.}
    \end{cases}
\end{equation}

A feasibility program that ensures $\sup_{\dx} \lambda_{max} (\Phi(A,B)) < 0, \ \forall (A,B) \in \pc$ (ranging over $i=1..n$) is
\begin{subequations}
\label{eq:altern_feas}
\begin{align}
    \find_{\zeta, \mu} \ & Q(A, B; \zeta^\pm, \mu) \prec 0  & & \forall (A, B)\in \pc\label{eq:altern_feas_Q}\\
&\zeta_{1i}^+ - \zeta^-_{1i} = \textstyle \sum_{j=1}^n A_{ji}  \mu_{1j}  \label{eq:altern_feas_zeta1}\\
& \zeta_{ti}^+ - \zeta^-_{ti}= \textstyle \sum_{j=1}^n A_{ji}  \mu_{tj}   -  \mu_{t-1,i} & & \forall t \in 2.. T- 1 \\
      & \zeta_{Ti}^+ - \zeta^-_{Ti} = - \mu_{T-1, i} \label{eq:altern_feas_zetaT} \\
      & \zeta_{ti}^\pm(A,B) \in \psd_+^s & & \forall t=1..T, \label{eq:altern_feas_exists_zeta} \\
    & \mu_{ti}(A,B) \in \psd^s & & \forall t=1..T-1.\label{eq:altern_feas_exists}
\end{align}
\end{subequations}

\begin{thm}
\label{thm:feas_strong}
When $s=1$, program \eqref{eq:altern_feas} is equivalent to the  statement in \eqref{eq:altern_nonneg}, and is a strong alternative to \eqref{eq:altern_eig}. When $s > 1$, program \eqref{eq:altern_feas} is a weak alternative to \eqref{eq:altern_eig}.
\end{thm}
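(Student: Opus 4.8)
The plan is to show that the feasibility program \eqref{eq:altern_feas} is nothing but an explicit rewriting of the multiplier condition \eqref{eq:altern_sup} from Theorem \ref{thm:sup_weak}, and then to chain together the sufficiency result (Theorem \ref{thm:sup_weak}), the scalar necessity result (Theorem \ref{thm:sup_strong}), and the strong-alternative Lemma \ref{lem:strong_altern}. Concretely, I would first argue that \eqref{eq:altern_feas} is feasible if and only if there exist multiplier functions $(\zeta^\pm, \mu)$ of the types in \eqref{eq:mu_zeta_def} such that $\sup_{\dx}\lambda_{\max}(\Phi(A,B;\zeta^\pm,\mu)) < 0$ for every $(A,B)\in\pc$. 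This equivalence holds for all $s$ and is the technical heart of the proof.

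The key step is to justify the supremum evaluation \eqref{eq:dual_function_quad}. Since $\Phi$ depends affinely on $\dx$, grouping constant and linear terms as in \eqref{eq:altern_const}--\eqref{eq:phi_sum} writes $\Phi = Q + \sum_{t,i} C_{ti}\,\dx_{ti}$ with symmetric coefficient matrices $C_{ti}$. I would then use that $\lambda_{\max}$ is convex and that $\Phi$ is an affine map of the unconstrained variable $\dx \in \R^{n\times T}$: if some $C_{ti}\neq 0$, it possesses a nonzero (hence positive or negative) eigenvalue, and driving $\dx_{ti}\to\pm\infty$ along the corresponding eigendirection forces $\lambda_{\max}(\Phi)\to+\infty$; if instead every $C_{ti}=0$, then $\Phi\equiv Q$ and the supremum is exactly $\lambda_{\max}(Q)$. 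Collecting the coefficient of each $\dx_{ti}$ (taking care of the boundary times $t=1$ and $t=T$, where the $\mu_{t-1,\cdot}$ and $\mu_{t,\cdot}$ terms respectively drop out because $\mu$ is only defined for $t=1..T-1$) reproduces exactly the equality constraints \eqref{eq:altern_feas_zeta1}--\eqref{eq:altern_feas_zetaT}. Hence requiring $\sup_{\dx}\lambda_{\max}(\Phi)<0$ for all $(A,B)\in\pc$ is equivalent to imposing these equalities together with $Q\prec 0$ (constraint \eqref{eq:altern_feas_Q}) and the sign/definiteness conditions \eqref{eq:altern_feas_exists_zeta}--\eqref{eq:altern_feas_exists}, i.e.\ to feasibility of \eqref{eq:altern_feas}.

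Having established this equivalence, I would finish by invoking the earlier results. For general $s$, Theorem \ref{thm:sup_weak} states that existence of such multipliers is sufficient for infeasibility of \eqref{eq:altern_eig}; thus feasibility of \eqref{eq:altern_feas} implies infeasibility of \eqref{eq:altern_eig}, and by contraposition the two programs cannot be simultaneously feasible, which is the weak-alternative claim. For $s=1$, Theorem \ref{thm:sup_strong} supplies the converse (the multiplier condition is also necessary for infeasibility of \eqref{eq:altern_eig}), so feasibility of \eqref{eq:altern_feas} is equivalent to infeasibility of \eqref{eq:altern_eig}. Lemma \ref{lem:strong_altern} identifies infeasibility of \eqref{eq:altern_eig} with the positive-definiteness statement \eqref{eq:altern_nonneg}, yielding the asserted equivalence of \eqref{eq:altern_feas} and \eqref{eq:altern_nonneg} and transferring the strong-alternative property (exactly one of \eqref{eq:altern_nonneg}/\eqref{eq:altern_eig} holds) to the pair \eqref{eq:altern_feas}/\eqref{eq:altern_eig}.

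The main obstacle I anticipate is the matrix-valued supremum argument in the second paragraph: unlike the scalar reasoning sketched for Theorem \ref{thm:sup_strong}, here one must argue the blow-up of $\lambda_{\max}$ rather than of a scalar linear form, and must verify that vanishing of \emph{every} symmetric coefficient matrix $C_{ti}$ is both necessary and sufficient for finiteness of the supremum. A secondary bookkeeping hazard is correctly assembling the coefficient of each $\dx_{ti}$ from the three sums in \eqref{eq:phi_sum} and matching them against the interior and boundary equality constraints; this is routine index-chasing but must be done carefully so that the $t=1$ and $t=T$ cases line up with \eqref{eq:altern_feas_zeta1} and \eqref{eq:altern_feas_zetaT}.
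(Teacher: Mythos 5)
Your proposal is correct and follows essentially the same route as the paper: chain Lemma \ref{lem:strong_altern}, Theorems \ref{thm:sup_weak} and \ref{thm:sup_strong}, and the observation that \eqref{eq:altern_feas} is an explicit rewriting of the multiplier condition \eqref{eq:altern_sup}. The only difference is that you spell out the eigendirection blow-up argument behind the supremum evaluation \eqref{eq:dual_function_quad}, which the paper states without justification; this is a welcome addition but not a different approach.
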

\begin{proof}
Lemma \ref{lem:strong_altern} proves that \eqref{eq:altern_nonneg} and \eqref{eq:altern_eig} are strong alternatives. 

In the case where $s=1$, Theorem \ref{thm:sup_strong} proves that \eqref{eq:altern_eig} and \eqref{eq:altern_sup} are strong alternatives. Given that \eqref{eq:altern_feas} is an explicit condition for validity of \eqref{eq:altern_sup}, it holds that \eqref{eq:altern_feas} and \eqref{eq:altern_eig} are strong alternatives and therefore \eqref{eq:altern_nonneg} and \eqref{eq:altern_feas} are equivalent.

In the more general case where $s>1$, Theorem \ref{thm:sup_weak} proves that \eqref{eq:altern_eig} and \eqref{eq:altern_sup} are weak alternatives. Successfully finding a certificate $(\zeta^\pm, \mu)$ from \eqref{eq:altern_feas} validates that \eqref{eq:altern_eig} is infeasible. It is not possible for both \eqref{eq:altern_feas} and \eqref{eq:altern_eig} to hold simultaneously.  However, there  may still exist cases where \eqref{eq:altern_nonneg} holds but \eqref{eq:altern_feas} is infeasible.
\end{proof}


\begin{thm}
\label{thm:dual_continuous}
The dual multipliers $\zeta_{ti}^\pm(A, B)$ and $\mu_{ti}(A, B)$ which certify that $q(A, B) \succ 0$ over $\pc$ via \eqref{eq:altern_feas} may be chosen to be continuous.
\end{thm}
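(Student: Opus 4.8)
The plan is to follow the template of Lemma \ref{lem:m_cont_select}: realize the feasible multipliers as the values of a set-valued map, verify that this map is lower semicontinuous with nonempty closed convex values over the compact domain $\pc$, and then apply Michael's theorem (Proposition 9.3.2 in \cite{aubin2009set}) to extract a continuous selection, exhibiting the minimal-norm map as one explicit choice.

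First I would stack all multiplier matrices $(\zeta^\pm_{ti}, \mu_{ti})$ into a single element of a finite product of symmetric-matrix spaces $\mathcal{V}$, a finite-dimensional Banach space, and define $\mathcal{Z}: \pc \rightrightarrows \mathcal{V}$ sending $(A,B)$ to the set of multipliers obeying the equalities \eqref{eq:altern_feas_zeta1}--\eqref{eq:altern_feas_zetaT}, the cone memberships \eqref{eq:altern_feas_exists_zeta}--\eqref{eq:altern_feas_exists}, and the matrix inequality $Q(A,B;\zeta^\pm,\mu) \prec 0$. For each fixed $(A,B)$ these constraints are affine in the multipliers (the equalities), convex cone constraints (the PSD memberships), and a convex open condition ($\{X \prec 0\}$), so $\mathcal{Z}(A,B)$ is convex; it is nonempty because the hypothesis is precisely that \eqref{eq:altern_feas} admits a feasible certificate at every $(A,B)\in\pc$.

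Michael's theorem requires closed values, whereas the strict inequality $Q \prec 0$ yields a relatively open set. To repair this I would note that feasible multipliers can be moved strictly into the interior: since the equalities are homogeneous in the multipliers and feasibility of \eqref{eq:altern_feas} forces $q(A,B)\succ 0$ on $\pc$, shrinking a feasible $(\zeta^\pm,\mu)$ toward $0$ drives $Q$ toward $-q \prec 0$, furnishing a Slater point with a strictly negative definite margin. Using compactness of $\pc$ and continuity of $q$, one then tightens $Q \prec 0$ to $Q \preceq -\eta I$ for a uniform $\eta>0$ without destroying nonemptiness; the resulting map $\mathcal{Z}_\eta$ has closed convex values, and any continuous selection of $\mathcal{Z}_\eta$ is a fortiori one of $\mathcal{Z}$.

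The crux is lower semicontinuity of $\mathcal{Z}_\eta$, and this is where the present situation is harder than Lemma \ref{lem:m_cont_select}. There, only the right-hand sides of the defining inequalities varied with $(A,B)$, so Theorem 2.4 of \cite{mangasarian1987lipschitz} applied verbatim; here the \emph{coefficients} of the equality constraints depend on $A$ through the terms $\sum_j A_{ji}\mu_{tj}$, so the linear system undergoes genuine data perturbation and the pure right-hand-side result no longer suffices. The resolution is to exploit the Slater point established above: the constraint maps are jointly continuous in $\bigl((A,B),(\zeta^\pm,\mu)\bigr)$ and convex in the multipliers for each fixed parameter, so a Robinson-type stability argument for parametric convex systems satisfying a strict-feasibility condition yields lower semicontinuity of the feasible-set map. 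Verifying the hypotheses of that stability result in the matrix-valued, coefficient-perturbed setting is the main obstacle; once it is in hand, Michael's theorem produces a continuous selection, and the minimal-norm map $(\zeta^\pm,\mu)(A,B) = \argmin\{\norm{(\zeta^\pm,\mu)}_F^2 : (\zeta^\pm,\mu)\in\mathcal{Z}_\eta(A,B)\}$ gives an explicit continuous choice.
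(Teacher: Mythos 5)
Your overall architecture is the same as the paper's: stack the multipliers into a finite product of symmetric-matrix spaces, view the feasible set of \eqref{eq:altern_feas} as a set-valued map on the compact $\pc$, close its values by tightening the strict inequality to a uniform margin, establish strict feasibility (a Slater point), deduce lower semicontinuity, and invoke Michael's theorem. You also correctly identify the two places where Lemma \ref{lem:m_cont_select} does not transfer verbatim: the values are not closed because of $Q \prec 0$, and the equality constraints have $(A,B)$-dependent \emph{coefficients}, so the Mangasarian right-hand-side perturbation result no longer applies and one needs a Slater/Robinson-type stability argument instead. The paper does exactly this, defining $S^\tau$ with $-Q - \tau I \succeq 0$ for $0 < \tau < \tau^* = \min_{\pc}\lambda_{\min}(q)$ and citing \cite{daniilidis2013lower} for lower semicontinuity from the Slater condition.

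There is, however, a concrete gap in your Slater-point construction. Scaling a feasible $(\zeta^\pm,\mu)$ toward $0$ does give $Q_\lambda = (1-\lambda)(-q) + \lambda Q \preceq -(1-\lambda)\tau^* I$, i.e.\ a uniform strict margin on the $Q$-constraint, but it does \emph{not} produce a point that is strictly feasible for the cone constraints $\zeta^\pm_{ti} \succeq 0$: if some $\zeta^\pm_{ti}(A,B)$ is singular, then $\lambda\zeta^\pm_{ti}(A,B)$ is still singular, and as $\lambda \to 0$ the multipliers approach the boundary point $0$ of the PSD cone rather than its interior. Strict feasibility of \emph{all} the inequality constraints — including the semidefinite memberships \eqref{eq:altern_feas_exists_zeta}--\eqref{eq:altern_feas_exists} — is precisely what the Robinson-type argument you appeal to requires, so the step you flag as ``the main obstacle'' cannot be discharged from the Slater point you actually constructed. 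The paper's fix is an additive shift $\tilde{\zeta}^\pm_{ti} = \zeta^\pm_{ti} + \frac{\tau}{4Tn\epsilon}I$: the shift cancels in every difference $\zeta^+_{ti} - \zeta^-_{ti}$, so the equality constraints \eqref{eq:altern_feas_zeta1}--\eqref{eq:altern_feas_zetaT} are untouched, the shifted multipliers are uniformly PD, and the resulting increase in $Q$ is exactly $\frac{\tau}{2}I$, which is absorbed by the margin, landing the shifted point as a Slater point of $S^{\tau/4}$. Your scaling trick could be combined with such a shift, but on its own it does not suffice.
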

\begin{proof}
See Appendix \ref{app:continuity}.
\end{proof}

\begin{thm}
\label{thm:dual_polynomial}
Whenever \eqref{eq:altern_feas} is feasible,  $\zeta_{ti}^\pm(A, B)$ and $\mu_{ti}(A, B)$ may additionally be required to be polynomial matrices.
\end{thm}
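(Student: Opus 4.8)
The plan is to bootstrap from the continuous multipliers guaranteed by Theorem \ref{thm:dual_continuous} and upgrade them to polynomials via the Stone--Weierstrass theorem, exploiting the affine structure of the equality constraints \eqref{eq:altern_feas_zeta1}--\eqref{eq:altern_feas_zetaT} to keep those constraints satisfied \emph{exactly}. Let $(\zeta^{\pm,c}_{ti}, \mu^c_{ti})$ be a continuous feasible point of \eqref{eq:altern_feas}. Since $\pc$ is compact (Assumption \ref{assum:compact}) and $Q(A,B;\zeta^{\pm,c},\mu^c)$ is continuous and strictly negative definite on $\pc$, the map $(A,B)\mapsto\lambda_{\max}(Q)$ attains a negative maximum, so there is a uniform margin $\eta > 0$ with $Q(A,B;\zeta^{\pm,c},\mu^c) \preceq -\eta I$ for every $(A,B) \in \pc$. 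The goal is to produce polynomial $(\zeta^{\pm,p}_{ti}, \mu^p_{ti})$ whose associated $Q$ remains negative definite and whose $\zeta^{\pm,p}_{ti}$ remain \ac{PSD}, all while the linear equalities hold identically in $(A,B)$.

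First I would approximate only the \emph{independent} continuous data --- the free symmetric multipliers $\mu^c_{ti}$ and the nonnegative multipliers $\zeta^{-,c}_{ti}$ --- by symmetric-matrix-valued polynomials $\mu^p_{ti}$ and $\hat\zeta^{-,p}_{ti}$, entrywise and uniformly on $\pc$ to a tolerance to be fixed below. Because each right-hand side of \eqref{eq:altern_feas_zeta1}--\eqref{eq:altern_feas_zetaT} is an affine function of the $\mu$'s with coefficients drawn from the coordinate entries $A_{ji}$ (themselves degree-one polynomials), substituting $\mu^p$ makes each prescribed difference $D^p_{ti} := \zeta^{+}_{ti} - \zeta^{-}_{ti}$ a polynomial matrix. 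I then \emph{define} $\zeta^{+,p}_{ti} := \hat\zeta^{-,p}_{ti} + D^p_{ti}$, so that the equality constraints hold by construction, exactly and for all $(A,B)$. This sidesteps the central difficulty: directly splitting the polynomial difference $D^p_{ti}$ into a difference of two \ac{PSD} matrices via the spectral positive/negative-part decomposition would destroy polynomiality, since taking absolute values of eigenvalues is not a polynomial operation.

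Next I would restore the \ac{PSD} and definiteness requirements by a uniform shift. Set $\zeta^{-,p}_{ti} := \hat\zeta^{-,p}_{ti} + \delta I$ and $\zeta^{+,p}_{ti} := \hat\zeta^{-,p}_{ti} + D^p_{ti} + \delta I$ for a fixed scalar $\delta > 0$; note the difference $\zeta^{+,p}_{ti}-\zeta^{-,p}_{ti}=D^p_{ti}$ is unchanged by the shift. Since $\zeta^{-,c}_{ti} \succeq 0$ and $\zeta^{+,c}_{ti} = \zeta^{-,c}_{ti} + D^c_{ti} \succeq 0$ by feasibility of the continuous point, the uniform approximants satisfy $\hat\zeta^{-,p}_{ti} \succeq -\delta' I$ and $\hat\zeta^{-,p}_{ti} + D^p_{ti} \succeq -\delta' I$ with $\delta'$ as small as desired, so any $\delta > \delta'$ renders both $\zeta^{\pm,p}_{ti}$ genuinely \ac{PSD}. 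It then remains to verify that $Q$ survives: writing $Q^p - Q^c$ and using $\zeta^{+,p}_{ti} + \zeta^{-,p}_{ti} = (\zeta^{+,c}_{ti} + \zeta^{-,c}_{ti}) + 2\delta I + (\text{approximation error})$ together with boundedness of the affine weights $h^0_{ti}$ on the compact set $\pc$, one bounds $\| Q^p - Q^c \|$ by a constant multiple of $\delta$ plus the uniform approximation tolerance.

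I would finish by ordering the parameter choices: first pick $\delta$ small enough that the guaranteed perturbation of $Q$ stays strictly below the margin $\eta$, then invoke Stone--Weierstrass to choose $\mu^p_{ti}, \hat\zeta^{-,p}_{ti}$ with tolerance below $\delta$ and below what the $Q$-bound permits. This yields $Q(A,B;\zeta^{\pm,p},\mu^p) \prec 0$ on all of $\pc$ with every multiplier a symmetric-matrix-valued polynomial, as claimed. The main obstacle throughout is the simultaneous interaction of the three constraint types --- the equalities must hold identically, the $\zeta^{\pm}$ must stay \ac{PSD}, and $Q$ must stay negative definite --- and the resolution is the asymmetric treatment that approximates $\zeta^-$ and $\mu$ freely, forces $\zeta^+$ algebraically to carry the equality, and then reconciles positivity against the preexisting margin through a single uniform $\delta$-shift.
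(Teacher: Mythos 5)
Your proposal is correct and follows essentially the same route as the paper's Appendix B: approximate $\mu$ by polynomials so that the equality right-hand sides become polynomial, parameterize the solution set of the equalities and approximate the remaining free part, restore \ac{PSD}-ness by adding a small multiple of the identity (which cancels in the differences), and absorb the resulting perturbation of $Q$ into the strict negative-definiteness margin on the compact set $\pc$. The only cosmetic difference is the parameterization of the equality constraints (you take $\zeta^-$ as the free variable and set $\zeta^+ = \zeta^- + D$, while the paper uses the sum $\phi = \zeta^+ + \zeta^-$ and writes $\zeta^\pm = \phi/2 \pm \gamma/2$), and the paper makes the norm bookkeeping explicit via an interval-matrix lemma, but the argument is the same.
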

\begin{proof}
See Appendix \ref{app:polynomial_sw}.
\end{proof}


\subsection{Alternatives SOS}

The degree-$2d$ \ac{WSOS} truncation of program \eqref{eq:altern_feas} to certify positive definiteness in \eqref{eq:altern_nonneg} is contained in Algorithm \ref{alg:altern_psatz}. The Alternatives Psatz in Algorithm \ref{alg:altern_psatz} requires the following assumption to ensure convergence as $d \rightarrow \infty$:
\begin{assum}
\label{assum:altern}
An Archimedean set $\Pi(A, B)\supseteq \pc$ is previously known.
\end{assum}
\begin{rem}
A set $\Pi$ may arise from prior knowledge about plant behavior and its reasonable limits (e.g. $(A,B)$ are contained in a known box).
\end{rem}

\begin{algorithm}[h]
\caption{Alternatives Psatz $(\Sigma_{\leq 2d}^{s, \alt}[\pc])$}\label{alg:altern_psatz}
\begin{algorithmic}
\Procedure{Altern Psatz}{$d, \ q(A, B), \Pi, \dc, \epsilon, s$}
\State Solve (or find infeasibility certificate):
\State \begin{subequations}
\label{eq:altern_sos}
\begin{align}
     &\zeta^\pm_{ti}(A, B) \in \Sigma^s[\Pi]_{\leq 2d} \label{eq:altern_sos_zeta} \\
    & \mu_{ti}(A, B) \in \psd^s[A, B]_{\leq 2d-1} \label{eq:altern_sos_mu}\\
    & -Q(A, B; \zeta^\pm, \mu) \in \Sigma[\Pi]_{\leq 2 d} \textrm{ (from \eqref{eq:altern_const})}\label{eq:altern_sos_Q}\\
      &\zeta_{1i}^+ - \zeta^-_{1i} = \textstyle \sum_{j=1}^n A_{ji}  \mu_{1j} \label{eq:altern_sos_1} \\
& \zeta_{ti}^+ - \zeta^-_{ti}= \textstyle \sum_{j=1}^n A_{ji}  \mu_{tj}   -  \mu_{t-1,i} \quad  \forall t \in 2.. T- 1 \label{eq:altern_sos_t}\\
      & \zeta_{Ti}^+ - \zeta^-_{Ti} = - \mu_{T-1, i} \label{eq:altern_sos_T}.
\end{align}
\end{subequations}
\Return $\zeta, \ \mu$ (or Infeasibility)
\EndProcedure
\end{algorithmic}
\end{algorithm}

\begin{rem}
All of the equality constraints in \eqref{eq:altern_sos_1}-\eqref{eq:altern_sos_T} are linear constraints in the coefficients of $\zeta^\pm, \mu$.
\end{rem}
\begin{rem}
Choosing $\zeta$ of degree $2d$ and $\mu$ of degree $2d-1$ ensures that the expression of $\Phi$ remains degree $2d$ when $\deg{q} \leq 2d$.
\end{rem}


The Full programs in \eqref{eq:super_full_wsos}, \eqref{eq:pos_sos}, and \eqref{eq:quad_sos} may be converted to Alternatives programs by replacing the respective cone containments $\in \Sigma^s[\bpc]_{\leq 2d}$ by $\in\Sigma^{s, \alt}[\Pi]_{\leq 2d}$. The below program \eqref{eq:super_altern_wsos} is an example of this type of Alternatives conversion for the extended superstability program \eqref{eq:super_full_wsos} with the cone $\Sigma^{s, \alt}[\Pi]_{\leq 2d}$ rather than $\Sigma^{1}[\bpc]_{\leq 2d}$:
\begin{subequations}
\label{eq:super_altern_wsos}
\begin{align}
    v & \in \R^n_{>0}, \ S\in \R^{n \times m} \\
    M_{ij} &\in \R[A,B]_{\leq 2d} & & \forall i,j \in 1..n\\
    q^{row}_i &\in \Sigma_{\leq 2d}^{1,\alt}[\Pi] & & \forall i \in 1..n \label{eq:super_altern_wsos_put_row}\\
    q^{\pm}_{ij} &\in \Sigma_{\leq 2d}^{1,\alt}[\Pi] & & \forall i,j \in 1..n.\label{eq:super_altern_wsos_put_pm}
\end{align}
\end{subequations}

\section{Worst-Case-H2-Optimal Control}

\label{sec:optimal_control}
The previously presented Full and Alternative methods perform superstabilization or quadratic stabilization by attempting to find a single feasible controller $K$.
This section generalizes existing \acp{LMI} for discrete-time worst-case $H_2$-optimal  control over all possible linear systems in the consistency set $\pc$. 
The discrete-time linear system model used in this section has a state $x \in \R^n$, a control input $u \in R^m$, an exogenous input $w \in \R^e$, a matrix $E \in \R^{n \times e}$, and a regulated output $z \in \R^{r}$ defined by matrices $C \in \R^{r \times n}, D \in \R^{r \times m}$:  
\begin{align}
    x_{t+1} &= Ax_t + Bu_t + E w_t \label{eq:control_sys}\\
    z_t &= Cx_t + Du_t . \nonumber
\end{align}
The matrices $(C, D, E)$ will be a-priori given, while a consistency set description $(A, B) \in \pc$ is available through the input-state collected data $\dc$. 
The original \acp{LMI} referenced in this section are compiled in \cite{caverly2019lmi} and utilize a convexification method from \cite{feron1992numerical}. We note that robust performance can also be addressed by minimizing the peak-to-peak gain using \acp{LP} for superstability \cite{polyak2001optimal} or positive-stability \cite{rami2007controller}. Minimizing peak-to-peak gain for extended superstability requires solving parametric \acp{LP} \cite[Section 6]{polyak2004extended}.



The following \ac{LMI} to minimize the $H_2$-norm between $w \rightarrow z$ is expressed in Section 4.2.2 of \cite{caverly2019lmi}:
\begin{prob}
\label{prob:h2_single}
There exists a static output feedback $u=Kx$ such that the $H_2$-norm of the system \eqref{eq:control_sys} is upper bounded by a value $\gamma \in \R^+$ if and only if the following \acp{LMI} are feasible:
\begin{subequations}
\label{eq:h2_single}
\begin{align}
    \find_{Y, Z, S} \quad & \begin{bmatrix} Y - EE^T & A Y + B S \\ \ast & Y \label{eq:h2_single_schur}\end{bmatrix} \in \psd_{++}^{2n} \\
    & \begin{bmatrix} Z & C Y + D S \\ \ast & Y \end{bmatrix} \in \psd_{++}^{n+r} \\
    & \Tr Z \leq \gamma^2 \\
    & Y \in \psd^n_{++}, \ Z \in \psd^r_+, \ S \in \R^{m \times n}.
\end{align}
\end{subequations}
The corresponding feedback gain is $K = SY^{-1}$.
\end{prob} 

Problem \eqref{eq:h2_single} may be posed as a worst-case $H_2$ problem by minimizing $\gamma^2$ and ensuring that the LMIs hold over all possible plants in the consistency set $\pc$.

\begin{prob}
There exists a static output feedback $u=Kx$ such that the worst-case optimal $H_2$-norm of the system \eqref{eq:control_sys} over a set $(A, B)\in \pc$ may be upper bounded by
\begin{align}
\inf_{\gamma \in \R^+, Y, Z, S} &  \qquad \gamma^2 \label{eq:h2_worst} \\
& \begin{bmatrix} Y - E E^T& A Y + B S \\
    \ast & Y \end{bmatrix} \in \psd_{++}^{2n}  \qquad \forall (A, B) \in \pc \nonumber \\
    & \begin{bmatrix} Z & C Y + D S \\ \ast & Y \end{bmatrix} \in \psd^{n+r}_+
    \nonumber \\
    & \gamma^2 - \Tr Z \geq 0
    \nonumber \\
  & Y \in \psd^n_{++}, Z\in \psd^r_+, \ S \in \R^{m \times n}.  \nonumber
\end{align}
The feedback gain $K = SY^{-1}$ is constant in $(A, B)$.
\end{prob}

A degree-$2d$ Alternatives problem may be posed for worst-case $H_2$ synthesis by restricting the top \ac{LMI} in \eqref{eq:h2_single} (holding over $\forall (A,B) \in \pc$) to the cone $\Sigma^{2n,\alt}[\Pi]_{\leq 2d}$, as in
\begin{align}
\min_{\gamma \in \R^+, Y, Z, S} &  \qquad \gamma^2 \label{eq:h2_wsos} \\
& \begin{bmatrix} Y - E E^T& A Y + B S \\
    \ast & Y \end{bmatrix} \in \Sigma^{2n,\alt}[\Pi]_{\leq 2d} \nonumber \\
    & \begin{bmatrix} Z & C Y + D S \\ \ast & Y \end{bmatrix} \in \psd^{n+r}_+
    \nonumber \\
    & \gamma^2 - \Tr Z \geq 0
    \nonumber \\
  & Y \in \psd^n_{++}, Z\in \psd^r_+, \ S \in \R^{m \times n}.  \nonumber
\end{align}

All \ac{SOS} problems will be presented for the Alternatives case due to the Gram large matrix sizes involved in the Full algorithms. The Full problem would restrict the same matrix to the cone $\Sigma^{2n}[\bpc]_{\leq 2d}$. 


\section{Computational Complexity}


\label{sec:complexity}

This section tabulates the computational complexity involved in executing (Full or Alternative) \ac{WSOS} tightenings of the stabilization programs \ref{prob:super_full}, \ref{prob:quad_full} \ref{prob:pos_full} under measurement noise. As a reminder, the Full methods involve $p_F = n(n+m+T)$ variables $(A, B, \dx)$, while the Alternatives methods employ $p_A = n(n+m)$ variables $(A, B)$ after eliminating $\dx$.

The following tables will use the notation $\R(\cdot)$ for the size (cone) of a vector $\R^{(\cdot) \times 1}$ (free variable) and $\psd_+(\cdot)$ for the size of a matrix $\psd_+^{(\cdot)}$ (semidefinite variable).


The extended superstabilizing algorithms \eqref{eq:super_full_wsos} and \eqref{eq:super_altern_wsos}  involve $2n^2 + n$ polynomials $q(A, B)$ that must be nonnegative. The positive stabilizing algorithm \eqref{eq:pos_sos} and its Alternatives restriction has $n^2+n$ polynomials $q(A, B)$ that must be nonnegative.Table \ref{tab:size_ss} compares the variable sizes in the Putinar Psatz \eqref{eq:putinar} and the Alternatives Psatz \eqref{eq:altern_sos}. 

\begin{table}[h]
    \centering
        \caption{Size of Superstabilizing Method (Psatz)}
    \label{tab:size_ss}
   \begin{tabular}{|c|c|c|c|}
\hline
           & Number of Polynomials & Full                              & Alternatives                                             \\ \hline
$q$        & 1          & $\R{\binom{p_F + 2d}{2d}}$        & $\R{\binom{p_A + 2d}{2d}}$                          \\ \hline
$\sigma_0$ & 1          & $\psd_+{\binom{p_F + d}{d}}$      & $\psd_+{\binom{p_A + d}{d}}$                        \\ \hline
$\sigma_i$ & $2nT$      & $\psd_+\binom{p_F + d - 1}{d- 1}$ & $\psd_+\binom{p_A + d}{d}$                          \\ \hline
$\mu_j$    & $2n(T-1)$  & $\R{\binom{p_F + 2d-2}{2d-2}}$    & \multicolumn{1}{c|}{$\R{\binom{p_A + 2d-1}{2d-1}}$} \\ \hline 
\end{tabular}
\end{table}

The quadratically stabilizing programs in algorithm \eqref{eq:quad_sos} involves a single \ac{PMI} of size $2n$, along with a positive definite matrix $Y \in \psd^n_{++}$.
Let $\nu = n(n+1)/2$ be the number of free variables in a symmetric matrix in $\psd^n$. Table \ref{tab:size_qs} lists the size of the matrices involved in the quadratically stabilizing Psatz expressions \eqref{eq:scherer} and \eqref{eq:altern_sos}.

\begin{table}[h]
    \centering
        \caption{Size of Quadratically Stabilizing Method (Scherer Psatz)}
    \label{tab:size_qs}
    \begin{tabular}{|c|c|c|c|}
\hline
       & Number of Polynomials & Full                                & Alternatives                      \\ \hline
$q$        & 1                     & $\R{\nu\binom{p_F + 2d}{2d}}$       & $\R{\nu\binom{p_A + 2d}{2d}}$     \\ \hline
$\sigma_0$ & 1                     & $\psd_+2n{\binom{p_F + d}{d}}$      & $\psd_+2n{\binom{p_A + d}{d}}$    \\ \hline
$\sigma_i$ & $2nT$                 & $\psd_+2n\binom{p_F + d - 1}{d- 1}$ & $\psd_+2n\binom{p_A + d}{d}$      \\ \hline
$\mu_j$    & $2n(T-1)$             & $\R{\nu\binom{p_F + 2d-2}{2d-2}}$   & $\R{\nu\binom{p_A + 2d-1}{2d-1}}$ \\ \hline
\end{tabular}
\end{table}


\begin{rem}
The dual variables $\mu$ have a degree of $2d-2$ in the Full formulation $(A \dx)$ and a degree of $2d-1$ in the alternatives formulation.
\end{rem}

The main reductions in computational complexity are:
\begin{enumerate}
    \item The Alternatives Gram matrix sizes are independent with respect $T$.
    \item Experimental results demonstrate that the Full method often only works with $d \geq 2$, while the Alternatives methods can permit convergence with $d = 1$.
\end{enumerate}



Table \ref{tab: size} displays the size (but not multiplicities) of variables involved in the Superstabilizing Algorithms \eqref{eq:super_full_wsos}, \eqref{eq:super_altern_wsos}.
Table \ref{tab: size_qs} lists sizes of \ac{SOS}-tightenings of the Quadratically  Algorithms \ref{eq:quad_sos} for parameters of $n=2, m=1, d_{full} = 2, d_{altern} = 1$ and increasing $T$. 
\begin{table}[h]
    \centering
        \caption{Size of Variables for Extended Superstability or Positive Stability}
    \label{tab: size}
    \begin{tabular}{ |c|c|c|c|c| }
 \hline
  & $q$ & $\sigma_0$ & $\sigma_i$ & $\mu_j$\\ \hline
  Alternatives & 28 & 7 & 7 & 7 \\ \hline
 Full(T = 4)  & 3060 & 120 & 15 & 120 \\ \hline
 Full(T = 6) & 7315 & 190 & 19 & 190 \\ \hline
 Full(T = 8) & 14950 & 276 & 23 & 276 \\ \hline
    \end{tabular}
\end{table}

\begin{table}[H]
    \centering
        \caption{Size of Variables for Quadratic Stability}
    \label{tab: size_qs}
    \begin{tabular}{ |c|c|c|c|c| }
 \hline
  & $q$ & $\sigma_0$ & $\sigma_i$ & $\mu_j$\\ \hline
  Alternatives & 280 & 28 & 28 & 70 \\ \hline
 Full(T = 4)  & 30600 & 480 & 60 & 1200 \\ \hline
 Full(T = 6) & 73150 & 760 & 76 & 1900 \\ \hline
 Full(T = 8) & 149500 & 1104 & 92 & 2760 \\ \hline
    \end{tabular}
\end{table}
\begin{rem}
Quadratic stability is much more costly to enforce compared with extended superstability or positive stability. The full method quickly becomes intractable as the number of samples increases. 
\end{rem}


The $H_2$ optimal control program in Algorithm \ref{eq:h2_wsos} has the same PSD matrix sizes as in Table \ref{tab: size_qs} for the Scherer constraint. It is additionally required that $Y \in \psd^n_{++}$ and $Z \in \psd^r_+$, but these sizes of $n$ and $r$ are small in comparision to the sizes in Table \ref{tab: size_qs}.

\section{All Noise Sources}
\label{sec:all_noise}

This section reinserts the process noise $w$ and input noise $\du$ from the model \eqref{eq:model} into the description of plant consistency sets.

\subsection{Set Description and Full}

Let $(\epsilon_x, \epsilon_u, \epsilon_w) \geq  0$ be $L_\infty$ bounds for the measurement, input, and process noise respectively. The
data $\dc = (\tilde{x}_t, \tilde{u}_t)$ produces a consistency set $\bpc^{\all}$ as
\begin{align}
    \bar{\pc}^{\all}: \ \begin{Bmatrix*}[l]\norm{\dx_t}_\infty &\leq \epsilon_x,  & \ \forall t = 1..T \\
     \norm{\du_t}_\infty &\leq \epsilon_u, & \ \forall t = 1..T-1 \\
     \norm{w_t}_\infty &\leq \epsilon_w, & \ \forall t = 1..T-1 \\
     \end{Bmatrix*},
     \label{eq:pcbar_all}
\end{align}
with the defined quantities for all $t = 1..T-1$:
\begin{subequations}
\begin{align}
    0 &= -\dx_{t+1} + A \dx_t + B \du_t + w_t  + h^\all_t \\
    h^\all_t &= \tilde{x}_{t+1} - A \tilde{x}_{t} - B \tilde{u}_t.
\end{align}
\end{subequations}
The set $\bpc^\all$ in \eqref{eq:pcbar_all} is described by
$2((T-1)(2n+m)+n)$ polynomial inequality constraints and $n(T-1)$ linear equality constraints in terms of the $p^\all_F = n(n+m) + (T-1)(2n+m)+n$ variables $(A, B, \dx, \du, w)$.
Just as in \eqref{eq:pcbar} with $\pc$ and $\bpc$, the semialgebraic set of consistent plants $\pc^\all(A, B)$ may be formed by the projection,
\begin{align}
    \pc^\all(A, B) = \pi^{A,B} \bar{\pc}^\all(A, B, \dx, \du,w). \label{eq:pc_all}
\end{align}

The combination of measurement, input, and process noise may be incorporated into the Full algorithms \eqref{eq:super_full_wsos} (superstability),  \eqref{eq:quad_sos} (quadratic stability), and \eqref{eq:pos_sos} (positive stability) by imposing Psatz  (\eqref{eq:putinar} or \eqref{eq:scherer}) positivity constraints over the set $\bpc^\all$ in \eqref{eq:pcbar_all}.

\begin{rem}
We note that structured process noise $x_{k+1} = A x_k + B u_k + E w_k$ for $E \in \R^{n \times e}$ may be incorporated into the All-noise framework. 
Additionally, the process noise variables $w$ may be eliminated when $e \leq n$. Define a left inverse $E^\dagger$ with $E^\dagger E = I_e$  and a matrix $\mathcal{N}$ containing a basis for the nullspace of $E^T$ in its columns. The following equations may be imposed to represent the process noise constraints for all time indices $t \in 1..T-1:$
\begin{align*}
    \epsilon_w &\geq \norm{E^{\dagger} \left((\tilde{x}_{t+1} - \dx_{t+1}) - A(\tilde{x}_t - \dx_t) - B \tilde{u}_t \right)}_\infty \\
    0 &=  \ \mathcal{N} \left((\tilde{x}_{t+1} - \dx_{t+1}) - A(\tilde{x}_t - \dx_t) - B \tilde{u}_t \right).
\end{align*}
\end{rem}

\subsection{Alternatives}

The \ac{BSA} set \eqref{eq:pcbar_all} is described by $n(n+m)+T(2n+m)-n$ variables $(A, B, \dx, \du, w)$. The variables $(\dx, \du, w)$ may be eliminated by following the Theorem of Alternatives laid out in Section \ref{sec:altern}. 
The process noise variables $w_t$ under the constraint $\forall t=1..T-1, \ \norm{w_t}_\infty \leq \epsilon_w$ will be eliminated by rearranging terms in \eqref{eq:noise_bilinear}:

\begin{align}
      \label{eq:noise_bilinear_w}  w_t  &= A (\hat{x}_t-\dx_t) + B(u_t-\Delta u_t) - (\hat{x}_{t+1} - \dx_{t+1}) \nonumber\\
      &= A \dx_t + B \du_t + h^\all_t - \dx_{t+1}.
      \end{align}

A certificate of \ac{PSD}-ness for the following matrix function $q(A, B)$ will be derived (from \eqref{eq:altern_nonneg}):
\begin{align}
    q(A, B) &\in \psd_{++}^s & \forall (A, B) \in \pc^\all.
    \label{eq:altern_nonneg_all}
\end{align}

Dual variables $\mu^\pm \in (\psd^s_+[A,B])^{n \times (T-1)}$ for $\epsilon_w$, $\psi^\pm \in (\psd^s_+[A,B])^{n \times T}$ for $\epsilon_u$ and $\zeta^\pm \in (\psd^s_+[A,B])^{n \times T}$ for $\epsilon_x$ may be initialized according to the Putinar multipliers \eqref{eq:putinar_variables} to form the weighted sum $\Phi^\all$ with 
\begin{align}
    \Phi^\all &= -q(A, B) + \textstyle \sum_{t=1,i=1}^{T, n} \left(\zeta_{ti}^+(\epsilon_x - \dx_t) +\zeta^-_{ti}(\epsilon_x + \dx_t)\right) \nonumber \\
    &+ \textstyle \sum_{t,i}^{T, m} \left(\psi_{ti}^+(\epsilon_u - \du_t) +\psi^-_{ti}(\epsilon_u + \du_t)\right) \label{eq:Sij_all} \\
    &+\textstyle\sum_{t,i}^{T-1,n} \mu_{ti}^-(\epsilon_w - (A_i \dx_{ti}+ B_i \du_{ti} + h_{ti}^\all-\dx_{t+1, i} )) \nonumber \\
    &+\textstyle\sum_{t,i}^{T-1,n} \mu_{ti}^+(\epsilon_w + (A_i \dx_{ti} + B_i \du_{ti} + h_{ti}^\all-\dx_{t+1, i} )). \nonumber
\intertext{The $(\dx,\du)$-constant terms of $\Phi^\all$ are}
    Q^\all &= -q(A, B) + \textstyle\sum_{t=1,i=1}^{T-1,n} \epsilon_w(\mu^- + \mu^+) \nonumber\\
    &+\textstyle\sum_{t=1,i=1}^{T-1,n} h_{ti}^\all (\mu^+ - \mu^-) \label{eq:altern_const_all}\\
    &+\textstyle \sum_{t=1,i=1}^{T, m} \epsilon_u \left(\psi_{ti}^+  + \psi_{ti}^- \right) +\textstyle \sum_{t=1,i=1}^{T, n}\epsilon_x \left(\zeta_{ti}^+ + \zeta_{ti}^- \right). \nonumber
\end{align}

Define the following symbol $\Delta \zeta = \zeta^+ - \zeta^-$, with a similar structure holding for $\Delta \psi$ and $\Delta \mu$. Following the supremum $\leq 0$ procedure of Section \ref{sec:altern} leads to an alternatives-based nonnegativity certificate of \eqref{eq:altern_nonneg_all}:
\begin{subequations}
\label{eq:altern_feas_all}
\begin{align}
    \find_{\zeta, \mu, \psi} \ & -Q^\all(A, B; \  \zeta^\pm, \mu^\pm, \psi^\pm) \in \psd_+^s  & & \forall (A, B)\label{eq:altern_feas_all_Q}\\
&\Delta \zeta_{1i} = \textstyle \sum_{j=1}^n A_{ji}  \left(\mu_{1j}^+ - \mu_{1j}^-\right)  \\
& \Delta \psi_{ti} = B \Delta \mu_{ti} & & \forall t = 1..T\\
& \Delta \zeta_{ti} = \textstyle \sum_{j=1}^n A_{ji}  \Delta \mu_{tj}   -  \Delta \mu_{t-1,i} & & \forall t \in 2.. T- 1 \\
      & \Delta \zeta_{Ti}  = -\Delta  \mu_{T-1, i} \\
        & \psi_{ti}^\pm, \ \zeta_{ti}^\pm \in \psd_+^s[A,B] \label{eq:altern_feas_all_psi} & & \forall i,  \ \forall t \in 1..T \\
    & \mu_{ti}^\pm \in \psd_+^s[A,B]  & &\forall i,  \ \forall t \in 1..T-1. \label{eq:altern_feas_all_exists}
    \end{align}
\end{subequations}

The certificate \eqref{eq:altern_feas_all} involves only the $n(n+m)$ variables $(A, B)$ at the cost of requiring $2T(2n+m)-2n + 1$ Scherer Psatz constraints in $(A, B)$ (\eqref{eq:altern_feas_all_Q}  and  \eqref{eq:altern_feas_all_psi}-\eqref{eq:altern_feas_all_exists}). The cone of matrix-valued polynomials that admit certificates in \eqref{eq:altern_feas_all} may be expressed as $\Sigma^s[A,B]^{\alt,\all}$. The cone $\Sigma^n[A,B]^{\alt,\all}_{\leq d}$ may be substituted in for $\Sigma^s[A,B]^{\alt}_{\leq d}$ in all \ac{WSOS} tightenings such as 
Algorithm \eqref{eq:super_altern_wsos}. (superstability). An Archimedean set $\Pi^\all \supseteq \pc^\all$ must be known to ensure convergence of the Alternatives certificate as the degree $d \rightarrow \infty$ (from Assumption \ref{assum:compact}). All Scherer constraints would then take place under $-Q, \ \mu_{ti}^\pm, \ \zeta_{ti}^\pm \ \psi_{ti}^\pm \in \Sigma^s[\Pi^\all]_{\leq 2d}$ at degree-$d$.

\section{Numerical Examples}
\label{sec:examples}

MATLAB (2021a) code to generate the below examples is publicly available\footnote{
\url{https://github.com/jarmill/error_in_variables}}. Dependencies include Mosek \cite{mosek92} and YALMIP \cite{Lofberg2004}.

In this section, we first discuss the performance of the all three  stabilization algorithms. This performance is empirically compared using Monte Carlo experiments by adjusting the noise level and number of samples of data in $\dc$. 
Next, we show the worst-case optimal control for $H_2$  performance. A followup experiment shows the result where all type of noises are considered. Finally, we illustrate that the partial information helps identify a controller. 

\subsection{Monte Carlo Simulations for Stabilization}
To test the reliability of the proposed method, we collected 50 trajectories with different level of noise and choose $u, x_0$ to be uniformly distributed in $[-1, 1]$. We then apply Alternatives \ac{WSOS} tightenings of \eqref{eq:super_full}, \eqref{eq:quad_full}, and \eqref{eq:pos_full} on the following open-loop unstable system:
\begin{equation} \label{eq: sys}
A = \begin{bmatrix}  0.6863  & 0.3968\\
     0.3456  & 1.0388 \end{bmatrix}, \quad
B = \begin{bmatrix}  0.4170 & 0.0001 \\
    0.7203 & 0.3023 \end{bmatrix}.
\end{equation}
We first investigate the effect of noise by fixing $T = 8$. The result is reported in TABLE \ref{tab: eg1} showing the percentage of successful design for extended superstability (ESS), superstabilility (SS), positive stability (PS), and quadratic stability (QS).
\begin{table}[h]
    \centering
    \caption{Success rate $(\%)$ as a function of $\epsilon$ with $T = 8$}
    \label{tab: eg1}
    \begin{tabular}{ |c|c|c|c|c| }
 \hline
 $\epsilon$ & 0.05 & 0.08 & 0.11 & 0.14\\ \hline
 ESS & 100 & 88 & 69 & 40 \\ \hline
  SS & 100 & 84 & 57 & 39 \\ \hline
  PS & 94 & 61 & 19 & 3 \\ \hline
 QS & 100 & 100 & 90 & 79 \\ \hline

    \end{tabular}
\end{table}

As expected, ESS performs better than SS and PS individually, since ESS is a less restrictive stability condition. We note that QS was more successful than ESS in finding stabilizing controllers, but QS requires a maximal-size Gram matrix that is twice as large as in ESS. Increasing the noise level expands the consistency set, which in turn renders the problem of finding a single stabilizing controller more difficult. Collecting more sample data at the same noise bound $\epsilon=0.14$ reduces the size of the consistency set, as illustrated for all stabilization methods in TABLE \ref{tab: eg2}.

\begin{table}[h]
    \centering
        \caption{Success rate as a function of $T$ with $\epsilon = 0.14$}
    \label{tab: eg2}
    \begin{tabular}{ |c|c|c|c|c| }
 \hline
 $T$ & 8 & 10 & 12 & 14 \\ \hline
 ESS & 40 & 61 & 78 & 89 \\ \hline
  SS & 39 & 60 & 75 & 86 \\ \hline 
 PS & 3 & 20 & 42 & 56 \\ \hline
 QS & 79 & 86 & 95 & 99 \\ \hline
    \end{tabular}
\end{table}




To see an advantage of QS and ESS  over SS and PS, we now consider a simple mass-spring-damper system shown below with $x_1 = x, x_2 = \dot{x}, u = F$ that includes integrator dynamics:
\begin{center}
\begin{equation}\label{eq: sys2}
    A = \begin{bmatrix} 0 & 1 \\ -\frac{k}{m} & -\frac{b}{m} \end{bmatrix}, \;\;\;
    B = \begin{bmatrix} 0 \\ \frac{1}{m} \end{bmatrix}
\end{equation}
\begin{figure}[!h]
\centering
\begin{circuitikz}
\usetikzlibrary{patterns}
\pattern[pattern=north east lines] (0,0) rectangle (7,.25);
\draw[thick] (0,.25) -- (7,.25);

\draw (3,.25) to[spring, l=$k$] (3,2);
\draw (4,.25) to[damper, l_=$b$] (4,2);
\draw[fill=gray!40] (2.5,2) rectangle (4.5,3);
\node at (3.5,2.5) {$m$};

\draw[thick, ->] (3.5,4) -- (3.5,3);
\node at (3.75,3.75) {$F$};

\draw[very thick,
    |-latex] (2,2.5) -- ++(0,-1)
    node[midway,left]{\small $x$};
\end{circuitikz}
    \caption{Illustration of a Spring-Mass-Damper System with parameters in \eqref{eq: sys2}}
    \label{fig:spring}
\end{figure}
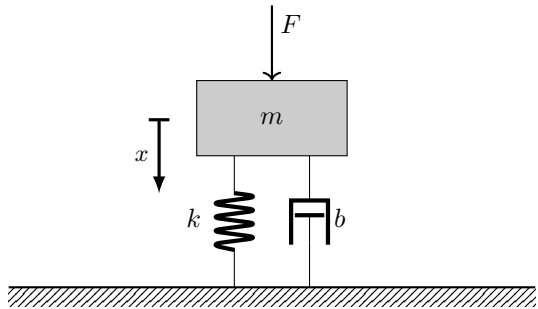
\end{center}
This system is not superstabilizable since any state feedback $u=Kx$ cannot affect the first row of $A$, hence the infinity norm of $A+BK$ is always greater or equal to $1$. Similarly, condition \eqref{eq:stab_clean_d_geq} for positive stabilization would require that $v_1 - v_1 = 0$ must be positive. 
System \eqref{eq: sys2} is extended-superstabilizable by \cite[Example 4]{polyak2004extended}, and 
one can easily apply an Alternatives \ac{WSOS} tightening to \eqref{eq:quad_full} to try and find a quadratically stabilizing controller at the cost of multiplying the size of all Gram matrices involved by 2.

\subsection{Worst-Case-Optimal Control}
To analyze the optimal performance of the proposed method, we first solve the standard $H_2$ problem \eqref{eq:h2_single} ($C = I_{r \times n}, \ D = [\0_{n, m}; I_m], \ E = I_{n \times e}$),  with known $A,B$ defined by \eqref{eq: sys}. We denote the benchmark as $\gamma_2 = 1.9084$. Now we apply Program \ref{eq:h2_wsos} to 50 trajectories with different level of noise. The effect of noise is shown in TABLE \ref{tab: eg4} with fixed $T = 8$.

For each of the 50 trajectories, Program \ref{eq:h2_wsos} returns a control policy $K$ and a worst-case-$H_2$ upper bound $\gamma_{2,worst}$ (valid for all $(A, B) \in \pc$). The quantity $\gamma_{2,clp}$ (closed-loop poles) is the $H_2$ norm found by applying $K$ as an input to the ground-truth system and solving Problem \ref{prob:h2_single}. It therefore holds that $\gamma_{2,worst} \geq \gamma_{2, clp}$ for each trajectory. The quantities returned in Table \ref{tab: eg4} and all subsequent tables are the median values of $(\gamma_{2, clp}, \gamma_{2,worst})$ over the 50 trajectories in order to prevent outliers deviating results.

\begin{table}[h]
    \caption{$H_2$ performance as a function of $\epsilon$ with $T= 8$}
    \label{tab: eg4}
    \centering
    \begin{tabular}{ |c|c|c|c|c| }
 \hline
 $\epsilon$ & 0.05 & 0.08 & 0.11 & 0.14 \\ \hline
 $\gamma_{2,clp}$ & 1.9684  &  2.0715 &   2.1773  &  2.1456 \\ \hline
 $\gamma_{2,worst}$ & 2.3004  &  2.7308 &   3.2279  &  4.3137 \\ \hline
    \end{tabular}
\end{table}

As we increase the noise, $\gamma_{2,worst}$ also increases since the consistency set is expanded. However, $\gamma_{2,clp}$ does not necessarily increase since we only optimize in the worst case. It is also worth noting that $\gamma_2 \leq \gamma_{2,clp} \leq \gamma_{2,worst}$ and the equality holds only if there is no noise. $H_2$ performance can be improved by collecting more samples as shown in TABLE \ref{tab: eg5}.

\begin{table}[h]
    \centering
        \caption{$H_2$ performance as a function of $T$ with $\epsilon = 0.08$}
    \label{tab: eg5}
    \begin{tabular}{ |c|c|c|c|c| }
 \hline
 $T$ & 8 & 10 & 12 & 14 \\ \hline
 $\gamma_{2,clp}$ & 2.0715 & 1.9637 & 1.9373 & 1.9321 \\ \hline
 $\gamma_{2,worst}$ & 2.7308 & 2.4160 & 2.2328 & 2.2014 \\ \hline
    \end{tabular}
\end{table}

\subsection{All Noise}
The proposed framework is easily extended to handle different type of noises which better captures the estimation error of the collected data. Consider the following set of noise bounds:
\begin{equation}
\label{eq:all_noise_set}
    \begin{aligned}
    & \text{set} 1: \; \epsilon_x = 0.03, \epsilon_u = 0.00, \epsilon_w = 0.00 \\ 
    & \text{set} 2: \; \epsilon_x = 0.00, \epsilon_u = 0.02, \epsilon_w = 0.00 \\
    & \text{set} 3: \; \epsilon_x = 0.00, \epsilon_u = 0.00, \epsilon_w = 0.05 \\
    & \text{set} 4: \; \epsilon_x = 0.03, \epsilon_u = 0.02, \epsilon_w = 0.00 \\
    & \text{set} 5: \; \epsilon_x = 0.00, \epsilon_u = 0.02, \epsilon_w = 0.05 \\
    & \text{set} 6: \; \epsilon_x = 0.03, \epsilon_u = 0.02, \epsilon_w = 0.05 \\
    \end{aligned}
\end{equation}
For system \eqref{eq: sys}, we collected a single data trajectory of length $T = 8$ starting from an initial state of $x_1 = [1; 0]$ with $u$ uniformly distributed in $[-1,1]^2$. The worst-case $H_2$ norm for the all-noise bounds in \eqref{eq:all_noise_set} is collected in Table \ref{tab: eg6}:
\begin{table}[h]
    \centering
        \caption{$H_2$ performance for different sets of noise}
    \label{tab: eg6}
    \begin{tabular}{ |c|c|c|c|c|c|c| }
 \hline
 set & 1 & 2 & 3 & 4 & 5 & 6\\ \hline
 $\gamma_{2,clp}$ & 1.9340 & 1.9131 & 1.9750 & 1.9615 & 2.1249 & 2.1659 \\ \hline
 $\gamma_{2,worst}$ & 2.0681 & 1.9628 & 2.1294 & 2.1554 & 2.5029 & 2.5973 \\ \hline
    \end{tabular}
\end{table}



The $H_2$ norm for the nominal plant is $\gamma_2 = 1.9084$. It is clear that adding more type of noise expands the consistency set hence leads to a larger worst-case $H_2$ norm. 

\subsection{Partial Information}
It is easy to incorporate the partial information in the proposed framework. Instead of treating all entries of $A,B$ as unknown variables, we can suppose $q$ entries of $(A,B)$ are known. There are now $n(n+m)-q$ free variables defining the consistency set, producing a smaller Gram matrix of $\binom{n(n+m)-q+d}{d}$ as compared to $\binom{n(n+m)+d}{d}$. This size reduction ensures that it is theoretically and computationally easier to find stabilizing controllers $K$ when partial information is known. For instance, if we assume that the first row of $A$ is known and apply Program \ref{eq:h2_wsos} with $T = 8$, $\epsilon = 0.08$, we get $\gamma_{2,clp} = 1.9568, \gamma_{2,worst} = 2.2566$ as compared to $\gamma_{2,clp} = 2.0715, \gamma_{2,worst} = 2.7308$ in the first column of TABLE \ref{tab: eg4}. 

We note that partial information can be incorporated into positive systems control. If it is a-priori known that $(A, B)$ arise from an internally positive system, then the elementwise nonnegativity constraints $A \in \R_{\geq0}^{n \times n}$ and $B \in \R_{\geq 0}^{n \times m}$ can be added to the consistency set $\bpc$ when applying \eqref{eq:pos_sos} or its Alternatives form.

\section{Extensions}
This section outlines extensions to the proposed \ac{SOS}-based stabilization framework under measurement noise.

\label{sec:extensions}

\subsection{Time-Varying Noise Sets}

The measurement noise $\dx$ in $\bpc$ satisfies an $L_\infty$ constraint of $\norm{\dx_{t}} \leq \epsilon$. In principle, the Full method will converge to a superstabilizing or quadratically stabilizing controller when each $\dx_t$ is restricted to be a member of an Archimedean \ac{BSA} set. Such sets may vary in time or even in the plant parameters $(A, B)$ so long as $\bpc$ remains Archimedean. The Alternatives framework may be employed to simplify computational complexity when the sets $\mathcal{F}_t$ with $\dx_t \in \mathcal{F}_t, \ \forall t = 1..T$ are restricted to be polytopes.



\subsection{Non-uniform Sampling}

The polynomial-optimization-based framework for \ac{EIV} control may be performed in cases with missing state data.
This extension will pose consistency sets with $(w, \du) = 0$ for ease of notation and derivation.
Consider a trajectory with samples $(\hat{x}_t, \hat{x}_{t+2})$ and inputs $(u_t, u_{t+1})$, in which the state observation $\hat{x}_{t+1}$ is missing. Treating $x_{t+1}$ as an unknown variable would add a new source of uncertainty to the description of $\bpc$ and increase the complexity of posing Psatz constraints. The unknown state $x_{t+1}$ can be eliminated using the relation  $x_{t+1} = A x_{t} + B u_t$:
\begin{subequations}
\begin{align}
x_{t+2} &= A x_{t+1} + B u_{t+1} \\
x_{t+2} &= A (A x_{t} + B u_t) + B u_{t+1} \\
x_{t+2} &= A^2 x_t + A B u_t + B u_{t+1} \\
\hat{x}_{t+2} - \dx_{t+2} &= A^2 (\hat{x}_t -\dx_t) + A B u_t + B u_{t+1}.
\end{align}
\end{subequations}


Given the state observations $(\hat{x}_t, \hat{x}_{t+r})$ for $r \in \N, r >0$ and the input history ${u_k}_{k=t}^{t+r}$, the $\dx$-affine expression for the missing data case is
\begin{equation}
\label{eq:aff_missing_data}
    -\dx_{t+r} + A^r \dx_t + \left(\hat{x}_{t+r} - \textstyle \sum_{k = 0}^{r} A^{r-k} B u_{t+k}\right)=0.
\end{equation}

Assume that state measurements are taken at $N_s$ times $\{t_s\}_{s=1}^{N_s} \subseteq (1..T)$. The missing-data records $\dc^{md}$ is comprised of the inputs $\{u_t\}_{t=1}^T$ and states $\{\hat{x}_{t_s}\}_{s=1}^{N_s}$. The missing-data consistency set is

\begin{align}
    \bar{\pc}^{md}: \ \begin{Bmatrix*}[l]\textrm{Eq \eqref{eq:aff_missing_data} with $t=t_s, \ r = t_{s+1}-t_s$}  & \forall k = 1..N_s-1 \\
     \norm{\dx_{t_k}}_\infty \leq \epsilon & \forall k = 1..  N_s\end{Bmatrix*}, \label{eq:pcbar_missing}
\end{align}

The Full Algorithms \eqref{eq:super_full_wsos}, \eqref{eq:quad_sos}, and \eqref{eq:pos_sos} may be used directly for missing-data by employing the consistency set $\bpc^{md}$ in  \eqref{eq:pcbar_missing} rather than $\bpc$.

An Alternatives Psatz from \ref{alg:altern_psatz} of a function $q(A, B) \in \psd_{++} \ \forall (A, B) \in \bpc^{md}$ may also be derived. 
The Superstability Alternatives program for missing-data involves two multipliers $\zeta^\pm_s$ for each sample $s \in 1, \ldots, N_s$.
Each sample $s$ from $1.. N_s-1$ inspires a multiplier $\mu_s$ and an $ h^0_k = \hat{x}_{t_{s+1}} - \textstyle \sum_{k = 0}^{r} A^{r-k} B u_{t_s+k}$ from \eqref{eq:aff_missing_data}. The Superstability Psatz from \eqref{eq:altern_feas} with non-uniform sampling is

\begin{subequations}
\label{eq:super_altern_nonuniform}
\begin{align}
&\exists \zeta_s^\pm \geq 0, \  \mu_s  \\
    & \textstyle q(A, B)\geq   \sum_{s, i} \epsilon(\zeta_{s, i}^+  +\zeta_{s, i}^-)  + \sum_{s=1}^{N_s-1} \mu_s^{N_s} h_s^0 \label{eq:altern_base} & & \forall (A, B)\\
&\zeta_1^+ - \zeta^-_1 = (A^{t_2 - t_1})^T \mu_1 \label{eq:altern_eq1} \\
      & \zeta_{s}^+ - \zeta^-_{s}= (A^{t_{s+1} - t_s})^T \mu_s - \mu_{s-1} & & \forall s\in 1..N_s- 1 \label{eq:altern_eqt} \\
      & \zeta_{N_s}^+- \zeta^-_{N_s} = - \mu_{N_s-1}. \label{eq:altern_eqT}
\end{align}
\end{subequations}

The Alternatives algorithm \eqref{eq:super_altern_wsos} may be used to perform superstabilizing control with the Psatz of \eqref{eq:super_altern_nonuniform}. A similar derivation can take place for Alternatives-based quadratic stabilization with  missing-data.

\subsection{Switched Systems Stabilization}

The final extension focuses on stabilization programs of switched systems.
Let there be a collection of $N_s$ discrete-time linear subsystems:
\begin{align}
\label{eq:hybrid}
    x_{t+1} =& A_s x_t + B_s u_t & \forall s = 1..N_s.
\end{align}

The switching sequence of the system \eqref{eq:hybrid} is a function $S: 1..T \rightarrow 1..N_s$ denoting the resident subsystem at time $t$.
An execution of the switched system \eqref{eq:hybrid} is a pair $\dc_S = (\{(x_t, u_t)\}_{t=1}^T, \{S_t\}_{t=1}^{T-1})$ such that,  
\begin{align}
\label{eq:hybrid_dyn}
    x_{t+1} =& A_{S_t} x_t + B_{S_t} u_t & \forall t = 1..T-1.
\end{align}

The execution $\dc_S$ is \textit{labeled} if the switching sequence $S_t$ is known. A labeled execution $\dc_S$ corrupted by $\epsilon$-bounded measurement noise $\dx$ inspires the following consistency set in terms of the unknown subsystem plants $\{A_s \in \R^{n\times n}, \ B_s \in \R^{n \times m}\}_{s=1}^T$ and measurement noise $\dx \in \R^{n \times T}$:
\begin{align}
    \bar{\pc}_S: \ \begin{Bmatrix*}[l]\forall t = 1..T-1: \\
    \quad \hat{x}_{t+1} - \dx_{t+1} = A_{S_t} (\hat{x}_{t}-\dx_t) + B_{S_t} u_t \\
     \forall t = 1..T: \qquad \norm{\dx_t}_\infty \leq \epsilon \end{Bmatrix*}. \label{eq:pcbar_switched}
\end{align}
A control $K$ that superstabilizes each subsystem $(A_s, B_s)$ may be found by solving Algorithm \eqref{eq:super_full_wsos}, in which all constraints are posed over the set \eqref{eq:pcbar_switched}, yielding a maximum PSD constraint of size $\binom{N_s n(n+m) + n T + d}{d}$ per Psatz.
Correlatively sparse cliques $(\dx_t, \dx_{t+1}, A_s, B_s)$ may be developed for each $t=1..T-1$ and $s=1..N_s$ \cite{waki2006sums}, with $N_s(T-1)$ instances of PSD constraints of maximal size $\binom{n(n+m+2) + d}{d}$ per Psatz. The Theorem of Alternatives may also be applied in order to eliminate the noise terms $\dx$, but the resultant polynomials $(\zeta^\pm, \mu)$ would have arguments consisting of all plant parameters $\{(A_s, B_s)\}_{s=1}^{N_s}$ with $2nT+1$ PSD constraints of maximal size $\binom{N_s n (n+m) + d}{d}$ per Psatz.

\section{Conclusion}

This paper formulated a state-feedback stabilization problem for systems corrupted by $L_\infty$-bounded measurement, process, and input noise. 
\ac{WSOS} programs for superstabilizability,  quadratic stabilizability, and positive stability (Full) will converge to their respective controllers (if such a controller exists) as the degree tends towards infinity.
Such programs are computationally expensive with regards to the size of the \ac{PSD} matrices required in \acp{SDP}. A theorem of alternatives was deployed to create equivalent (superstability and positive stability) and conservative (quadratic stabilizability) programs at a reduced computational cost by eliminating the noise variables. This framework was extended to the robust control of plants in the consistency set, as laid out through $H_2$ methods.

Future work involves developing static-output-feedback and dynamic-output-feedback in the case of combined input noise and measurement noise. 
Other work involves analyzing conditions for which the sets $\bpc$ and $\pc$ are  compact in terms of the collected data $\dc$ (e.g. sampling complexity and  defining a concept of persistency of excitation), and quantifying the conservativeness involved with utilizing the Theorem of Alternatives in the $s > 1$ case. Tractability of this method would improve with further development and reductions in complexity of solving \acp{SDP}.

\label{sec:conclusion}



\section*{Acknowledgements}


The authors would like to thank Didier Henrion, Victor Magron, Antonio Bellin, and the POP group at LAAS-CNRS for their advice and support.

\bibliographystyle{IEEEtran}
\bibliography{noise_references.bib}

\begin{thebibliography}{10}
\providecommand{\url}[1]{#1}
\csname url@samestyle\endcsname
\providecommand{\newblock}{\relax}
\providecommand{\bibinfo}[2]{#2}
\providecommand{\BIBentrySTDinterwordspacing}{\spaceskip=0pt\relax}
\providecommand{\BIBentryALTinterwordstretchfactor}{4}
\providecommand{\BIBentryALTinterwordspacing}{\spaceskip=\fontdimen2\font plus
\BIBentryALTinterwordstretchfactor\fontdimen3\font minus
  \fontdimen4\font\relax}
\providecommand{\BIBforeignlanguage}[2]{{%
\expandafter\ifx\csname l@#1\endcsname\relax
\typeout{** WARNING: IEEEtran.bst: No hyphenation pattern has been}%
\typeout{** loaded for the language `#1'. Using the pattern for}%
\typeout{** the default language instead.}%
\else
\language=\csname l@#1\endcsname
\fi
#2}}
\providecommand{\BIBdecl}{\relax}
\BIBdecl

\bibitem{polyak2001optimal}
B.~Polyak and M.~Halpern, ``Optimal design for discrete-time linear systems via
  new performance index,'' \emph{International Journal of Adaptive Control and
  Signal Processing}, vol.~15, no.~2, pp. 129--152, 2001.

\bibitem{polyak2002superstable}
B.~T. Polyak and P.~S. Shcherbakov, ``Superstable linear control systems. i.
  analysis,'' \emph{Automation and Remote Control}, vol.~63, no.~8, pp.
  1239--1254, 2002.

\bibitem{barmish1985necessary}
B.~R. Barmish, ``Necessary and sufficient conditions for quadratic
  stabilizability of an uncertain system,'' \emph{Journal of Optimization
  theory and applications}, vol.~46, no.~4, pp. 399--408, 1985.

\bibitem{farina2000positive}
L.~Farina and S.~Rinaldi, \emph{{Positive Linear Systems: Theory and
  Applications}}.\hskip 1em plus 0.5em minus 0.4em\relax John Wiley \& Sons,
  2000, vol.~50.

\bibitem{lasserre2009moments}
J.~B. Lasserre, \emph{{Moments, Positive Polynomials And Their Applications}},
  ser. Imperial College Press Optimization Series.\hskip 1em plus 0.5em minus
  0.4em\relax World Scientific Publishing Company, 2009.

\bibitem{de2019formulas}
C.~De~Persis and P.~Tesi, ``{Formulas for Data-Driven Control: Stabilization,
  Optimality, and Robustness},'' \emph{IEEE Transactions on Automatic Control},
  vol.~65, no.~3, pp. 909--924, 2019.

\bibitem{willems2005note}
J.~C. Willems, P.~Rapisarda, I.~Markovsky, and B.~L. De~Moor, ``A note on
  persistency of excitation,'' \emph{Systems \& Control Letters}, vol.~54,
  no.~4, pp. 325--329, 2005.

\bibitem{coulson2019data}
J.~Coulson, J.~Lygeros, and F.~D{\"o}rfler, ``{Data-enabled predictive control:
  In the shallows of the DeePC},'' in \emph{2019 18th European Control
  Conference (ECC)}.\hskip 1em plus 0.5em minus 0.4em\relax IEEE, 2019, pp.
  307--312.

\bibitem{coulson2022robust}
J.~Coulson, H.~van Waarde, and F.~D{\"o}rfler, ``{Robust Fundamental Lemma for
  Data-driven Control},'' \emph{arXiv preprint arXiv:2205.06636}, 2022.

\bibitem{shafai2022data}
B.~Shafai, A.~Moradmand, and M.~Siami, ``{Data-Driven Positive Stabilization of
  Linear Systems},'' in \emph{2022 8th International Conference on Control,
  Decision and Information Technologies (CoDIT)}, vol.~1.\hskip 1em plus 0.5em
  minus 0.4em\relax IEEE, 2022, pp. 1031--1036.

\bibitem{van2020noisy}
H.~J. van Waarde, M.~K. Camlibel, and M.~Mesbahi, ``From noisy data to feedback
  controllers: non-conservative design via a matrix {S}-lemma,'' \emph{IEEE
  Transactions on Automatic Control}, 2020.

\bibitem{bisoffi2021data}
A.~Bisoffi, C.~De~Persis, and P.~Tesi, ``Data-driven control via {P}etersen's
  lemma,'' \emph{arXiv preprint arXiv:2109.12175}, 2021.

\bibitem{berberich2020combining}
J.~Berberich, C.~W. Scherer, and F.~Allg{\"o}wer, ``Combining prior knowledge
  and data for robust controller design,'' \emph{IEEE Transactions on Automatic
  Control}, 2022.

\bibitem{bianchi2022data}
M.~Bianchi, S.~Grammatico, and J.~Cort{\'e}s, ``Data-driven stabilization of
  switched and constrained linear systems,'' \emph{arXiv preprint
  arXiv:2208.11392}, 2022.

\bibitem{van2020informativity}
H.~J. van Waarde, J.~Eising, H.~L. Trentelman, and M.~K. Camlibel, ``Data
  informativity: A new perspective on data-driven analysis and control,''
  \emph{IEEE Transactions on Automatic Control}, vol.~65, no.~11, pp.
  4753--4768, 2020.

\bibitem{dai2020data}
T.~Dai, M.~Sznaier, and B.~R. Solvas, ``{Data-Driven Quadratic Stabilization of
  Continuous LTI Systems},'' \emph{IFAC-PapersOnLine}, vol.~53, no.~2, pp.
  3965--3970, 2020.

\bibitem{cheng2015}
Y.~Cheng, M.~Sznaier, and C.~Lagoa, ``{Robust Superstabilizing Controller
  Design from Open-Loop Experimental Input/Output Data},''
  \emph{IFAC-PapersOnLine}, vol.~48, no.~28, pp. 1337--1342, 2015, 17th IFAC
  Symposium on System Identification SYSID 2015.

\bibitem{dai2018data}
T.~Dai and M.~Sznaier, ``{Data Driven Robust Superstable Control of Switched
  Systems},'' \emph{IFAC-PapersOnLine}, vol.~51, no.~25, pp. 402--408, 2018.

\bibitem{dai2022convex}
------, ``A convex optimization approach to synthesizing state feedback
  data-driven controllers for switched linear systems,'' \emph{Automatica},
  vol. 139, p. 110190, 2022.

\bibitem{miller2023ddcpos}
J.~Miller, T.~Dai, M.~Sznaier, and B.~Shafai, ``{Data-Driven Control of
  Positive Linear Systems using Linear Programming},'' 2023,
  \href{https://arxiv.org/abs/2303.12242}{arxiv:2303.12242}.

\bibitem{norton1987identification}
J.~Norton, ``Identification of parameter bounds for {ARMAX} models from records
  with bounded noise,'' \emph{International Journal of control}, vol.~45,
  no.~2, pp. 375--390, 1987.

\bibitem{cerone1993feasible}
V.~Cerone, ``Feasible parameter set for linear models with bounded errors in
  all variables,'' \emph{Automatica}, vol.~29, no.~6, pp. 1551--1555, 1993.

\bibitem{cerone2011set}
V.~Cerone, D.~Piga, and D.~Regruto, ``{Set-Membership Error-in-Variables
  Identification Through Convex Relaxation Techniques},'' \emph{IEEE
  Transactions on Automatic Control}, vol.~57, no.~2, pp. 517--522, 2011.

\bibitem{soderstrom2018errors}
T.~S{\"o}derstr{\"o}m, \emph{{Errors-in-Variables Methods in System
  Identification}}.\hskip 1em plus 0.5em minus 0.4em\relax Springer, 2018.

\bibitem{cerone1993parameter}
V.~Cerone, ``Parameter bounds for {ARMAX} models from records with bounded
  errors in variables,'' \emph{International Journal of Control}, vol.~57,
  no.~1, pp. 225--235, 1993.

\bibitem{SZNAIER19963550}
M.~Sznaier, R.~Suárez, S.~Miani, and J.~Alvarez-Ramírez, ``Optimal $l_\infty$
  disturbance rejection and global stabilization of linear systems with
  saturating control,'' \emph{IFAC Proceedings Volumes}, vol.~29, no.~1, pp.
  3550--3555, 1996, 13th World Congress of IFAC, 1996, San Francisco USA, 30
  June - 5 July.

\bibitem{polyak2004extended}
B.~T. Polyak, ``Extended superstability in control theory,'' \emph{Automation
  and Remote Control}, vol.~65, no.~4, pp. 567--576, 2004.

\bibitem{rami2007controller}
M.~A. Rami and F.~Tadeo, ``{Controller Synthesis for Positive Linear Systems
  With Bounded Controls},'' \emph{IEEE Transactions on Circuits and Systems II:
  Express Briefs}, vol.~54, no.~2, pp. 151--155, 2007.

\bibitem{aubin2009set}
J.-P. Aubin and H.~Frankowska, \emph{{Set-Valued Analysis}}.\hskip 1em plus
  0.5em minus 0.4em\relax Springer Science \& Business Media, 2009.

\bibitem{mangasarian1987lipschitz}
O.~L. Mangasarian and T.-H. Shiau, ``{Lipschitz Continuity of Solutions of
  Linear Inequalities, Programs and Complementarity Problems},'' \emph{SIAM
  Journal on Control and Optimization}, vol.~25, no.~3, pp. 583--595, 1987.

\bibitem{stone1948generalized}
M.~H. Stone, ``The generalized weierstrass approximation theorem,''
  \emph{Mathematics Magazine}, vol.~21, no.~5, pp. 237--254, 1948.

\bibitem{scherer2006matrix}
C.~W. Scherer and C.~W. Hol, ``{Matrix Sum-of-Squares Relaxations for Robust
  Semi-Definite Programs},'' \emph{Mathematical programming}, vol. 107, no.~1,
  pp. 189--211, 2006.

\bibitem{ben2015deriving}
A.~Ben-Tal, D.~Den~Hertog, and J.-P. Vial, ``Deriving robust counterparts of
  nonlinear uncertain inequalities,'' \emph{Mathematical programming}, vol.
  149, no. 1-2, pp. 265--299, 2015.

\bibitem{boyd2004convex}
S.~Boyd, S.~P. Boyd, and L.~Vandenberghe, \emph{{Convex Optimization}}.\hskip
  1em plus 0.5em minus 0.4em\relax Cambridge university press, 2004.

\bibitem{ben2002interval}
A.~Ben-Tal and A.~Nemirovski, ``On tractable approximations of uncertain linear
  matrix inequalities affected by interval uncertainty,'' \emph{SIAM Journal on
  Optimization}, vol.~12, no.~3, pp. 811--833, 2002.

\bibitem{zhen2022robust}
J.~Zhen, F.~J. de~Ruiter, E.~Roos, and D.~den Hertog, ``Robust optimization for
  models with uncertain second-order cone and semidefinite programming
  constraints,'' \emph{INFORMS Journal on Computing}, vol.~34, no.~1, pp.
  196--210, 2022.

\bibitem{caverly2019lmi}
R.~J. Caverly and J.~R. Forbes, ``{LMI Properties and Applications in Systems,
  Stability, and Control Theory},'' \emph{arXiv preprint arXiv:1903.08599},
  2019.

\bibitem{feron1992numerical}
E.~Feron, V.~Balakrishnan, S.~Boyd, and L.~El~Ghaoui, ``Numerical methods for h
  2 related problems,'' in \emph{1992 American control conference}.\hskip 1em
  plus 0.5em minus 0.4em\relax IEEE, 1992, pp. 2921--2922.

\bibitem{mosek92}
\BIBentryALTinterwordspacing
M.~ApS, \emph{The MOSEK optimization toolbox for MATLAB manual. Version 9.2.},
  2020. [Online]. Available:
  \url{https://docs.mosek.com/9.2/toolbox/index.html}
\BIBentrySTDinterwordspacing

\bibitem{Lofberg2004}
J.~L{\"{o}}fberg, ``{YALMIP : A Toolbox for Modeling and Optimization in
  MATLAB},'' in \emph{In Proceedings of the CACSD Conference}, Taipei, Taiwan,
  2004.

\bibitem{waki2006sums}
H.~Waki, S.~Kim, M.~Kojima, and M.~Muramatsu, ``{Sums of Squares and
  Semidefinite Programming Relaxations for Polynomial Optimization Problems
  with Structured Sparsity},'' \emph{SIOPT}, vol.~17, no.~1, pp. 218--242,
  2006.

\bibitem{parrilo2000structured}
P.~A. Parrilo, \emph{Structured semidefinite programs and semialgebraic
  geometry methods in robustness and optimization}.\hskip 1em plus 0.5em minus
  0.4em\relax California Institute of Technology, 2000.

\bibitem{hilbert1888darstellung}
D.~Hilbert, ``{\"U}ber die darstellung definiter formen als summe von
  formenquadraten,'' \emph{Mathematische Annalen}, vol.~32, no.~3, pp.
  342--350, 1888.

\bibitem{blekherman2006there}
G.~Blekherman, ``{There are Significantly More Nonnegative Polynomials than
  Sums of Squares},'' \emph{Israel Journal of Mathematics}, vol. 153, no.~1,
  pp. 355--380, 2006.

\bibitem{putinar1993compact}
M.~Putinar, ``{Positive Polynomials on Compact Semi-algebraic Sets},''
  \emph{Indiana University Mathematics Journal}, vol.~42, no.~3, pp. 969--984,
  1993.

\bibitem{cimpric2011closures}
J.~Cimpri{\v{c}}, M.~Marshall, and T.~Netzer, ``Closures of quadratic
  modules,'' \emph{Israel Journal of Mathematics}, vol. 183, no.~1, pp.
  445--474, 2011.

\bibitem{nie2007complexity}
J.~Nie and M.~Schweighofer, ``On the complexity of {Putinar's
  Positivstellensatz},'' \emph{Journal of Complexity}, vol.~23, no.~1, pp.
  135--150, 2007.

\bibitem{alizadeh1995interior}
F.~Alizadeh, ``{Interior Point Methods in Semidefinite Programming with
  Applications to Combinatorial Optimization},'' \emph{SIAM J OPTIMIZ}, vol.~5,
  no.~1, pp. 13--51, 1995.

\bibitem{denel1979extensions}
J.~Denel, ``Extensions of the continuity of point-to-set maps: {A}pplications
  to fixed point algorithms,'' \emph{Point-to-Set Maps and Mathematical
  Programming}, pp. 48--68, 1979.

\bibitem{daniilidis2013lower}
A.~Daniilidis, M.~A. Goberna, M.~A. L{\'o}pez, and R.~Lucchetti, ``Lower
  semicontinuity of the feasible set mapping of linear systems relative to
  their domains,'' \emph{Set-Valued and Variational Analysis}, vol.~21, no.~1,
  pp. 67--92, 2013.

\bibitem{rohn1994positive}
J.~Rohn, ``Positive definiteness and stability of interval matrices,''
  \emph{SIAM Journal on Matrix Analysis and Applications}, vol.~15, no.~1, pp.
  175--184, 1994.

\end{thebibliography}
\appendix
\section{Sum of Squares Methods}
\label{app:sos}
\subsection{Semialgebraic Geometry and Scalar Inequalities}
\label{sec:prelim_sos}

A \acf{BSA} set is the locus of a finite number of bounded-degree polynomial inequality and equality constraints \cite{lasserre2009moments}. A representation exists for every \ac{BSA} set $\mathbb{K}$ can be represented as
\begin{equation}
\mathbb{K} = \{x \mid g_i(x) \geq 0, \ h_j(x) = 0\},
\end{equation} for appropriate describing polynomials $\{g_i(x)\}_{i=1}^{N_g}$ and $\{h_j(x)\}_{j=1}^{N_h}$. 
The concatenation of describing polynomials implements the intersection of \ac{BSA} sets. 
The projection $\pi^x: (x, y) \mapsto x$ as applied to a \ac{BSA} set $\bar{\mathbb{G}}(x, y)$ is
\begin{equation}
    \mathbb{G}(x) = \pi^x \bar{\mathbb{G}}(x,y) = \{x \mid \exists y: (x, y) \in \bar{\mathbb{G}}\}.
\end{equation}

The projection of a \ac{BSA} set is not generally \ac{BSA}, and is instead termed as a semialgebraic set (closure of \ac{BSA} under projections, finite unions, and complements).

\ac{SOS} methods yield certificates that a polynomial $p(x)$ is nonnegative over a semialgebraic set $\K$. A polynomial $p(x)$ is \ac{SOS} ($p(x) \in \Sigma[x]$) if there exists a set of polynomials $\{q_i(x)\}_{i=1}^{N_q}$ such that $p(x) = \sum_{i=1}^{N_q} q_i(x)^2$.
To each \ac{SOS} certificate $\{q_i(x)\}_{i=1}^{N_q}$, there exists a nonunique choice of a size $s \in \N$, a \textit{Gram} matrix $Q \in \psd_{+}^s$, and a polynomial vector $v(x) \in \R[x]^s$, such that $p(x) = v(x)^T Q v(x)$. Given a matrix decomposition $Q = R^T R$, a valid certificate $\{q_i(x)\}_{i=1}^{N_q}$ may be recovered by $q(x) = R v(x)$. In practice, $v(x)$ is often chosen to be the degree-$d$ monomial map with $s = \binom{n+d}{d}$.

The set of \ac{SOS} polynomials with degree at most $2d$ is $\Sigma[x]_{\leq 2d} \subset \R[x]_{\leq 2d}$. Optimization over the cone of nonnegative polynomials is generically NP-hard, but tightening to the \ac{SOS} cone in fixed $(n, d)$ yields more tractable problems that can be solved using \acp{SDP} \cite{parrilo2000structured}.
By a theorem of Hilbert, the set of nonnegative polynomials in $n$ variables will coincide with \ac{SOS} polynomials with degree at most $2d$ only when $d=1$ (quadratic), $n=1$ (univariate), or $(n=2,d=2)$ (bivariate quartic) \cite{hilbert1888darstellung}. As the number of variables increases for fixed degree $d > 1$, the \ac{SOS} cone is less and less able to describe the set of nonnegative polynomials (in the sense of volume ratios) \cite{blekherman2006there}.

\ac{SOS} programming can be extended constrained optimization problems. The Putinar Positivestellensatz (Psatz) yields a sufficient certificate that a polynomial $p(x)$ is nonnegative over a \ac{BSA} $\mathbb{K}$
\cite{putinar1993compact}:
\begin{subequations}
\label{eq:putinar}
    \begin{align}
        & p(x) = \sigma_0(x) + \textstyle \sum_i {\sigma_i(x)g_i(x)} + \textstyle \sum_j {\phi_j(x) h_j(x)}\\
        &\exists  \sigma_0(x) \in \Sigma[x], \quad \sigma(x) \in (\Sigma[x])^{N_g}, \quad \phi \in (\R[x])^{N_h}. \label{eq:putinar_variables}
    \end{align}
\end{subequations}
The cone of polynomials that admit a Putinar certificate of nonnegativity over $\K$ in the sense of \eqref{eq:putinar} (where $\K$ is described by $g_i, \ h_j$) will be defined as the \ac{WSOS} cone $\Sigma[\mathbb{K}]$. The degree-$2d$ \ac{WSOS} cone $\Sigma[\K]_{\leq 2d}$ is the set of polynomials where multipliers in \eqref{eq:putinar_variables} are chosen such that each of the following products $(\sigma_0(x), \{\sigma_i(x) g_i(x)\}_{i=1}^{N_g}, \{\phi_j(x) h_j(x)\}_{j=1}^{N_h})$ has degree at most $2d$.

The \ac{BSA} set $\K \subset \R^n$ is \textit{compact} if there exists an $R \in (0, \infty)$ such that $\K \subseteq \{x \mid R - \norm{x}_2^2 \geq 0\}$ (Heine-Borel). It is additionally \textit{Archimedean} if  an $R \in (0, \infty)$ exists such that $R - \norm{x}_2^2 \in \Sigma[\K]$. Archimedeanness implies compactness, but there exists compact but non-Archimedean \ac{BSA} sets \cite{cimpric2011closures}. Given knowledge of an $R$ that proves compactness of $\K$, the compact set $\K$ can be rendered Archimedean by adding the redundant constraint $R - \norm{x}_2^2 \geq 0$ to $\K$'s description. Theorem 1.3 of \cite{putinar1993compact} states that every positive polynomial over an Archimedean $\K$ possesses a certificate \eqref{eq:putinar} (necessary and sufficient).
Increasing the degree $d$ of the \ac{WSOS} cones $\Sigma[\K]_{\leq 2d}$ until a valid positivity certificate is found is known as the moment-\ac{SOS} hierarchy \cite{lasserre2009moments}. The work in \cite{nie2007complexity} analyzes the convergence rate of the moment-\ac{SOS} hierarchy as $d$ increases for \acp{POP}.

Each iteration of an Interior Point Method solving an \ac{SDP} with a matrix variable $X \in \psd_+^N$ and $M$ affine constraint requires $O(N^3M + M^2 N^2)$ operations \cite{alizadeh1995interior}. Certifying \ac{SOS}-based nonnegativity of $p(x)$ over $\R^n$ through the degree-$d$ monomial map $v(x)$ requires a Gram $Q$ matrix of size $N = \binom{n+d}{n}$ and $M = \binom{n+2d}{2d}$ coefficient matching constraints. The per-iteration Interior Point computational complexity for fixed $n$ is therefore $O(d^{4n})$, and for fixed $d$ it is $O(n^{6d})$.

\subsection{Polynomial Matrix Inequalities}
\label{sec:pmi}
Letting $P(x) \in \psd^s[x]$ be a symmetric-matrix-valued polynomial, unconstrained and constrained \acp{PMI} posed over $P(x)$ are
\begin{align}
    P(x) &\succeq 0 & & \forall x \in \R^n, \label{eq:pmi_uncons}\\
    P(x) &\succeq 0 & & \forall x \in \K.\label{eq:pmi_cons}
\end{align}

This subsection (and paper) will assume that the \ac{BSA} set $\K$ is described by a set of scalar inequality and equality constraints, and that the only matrix constraints involved are $P(x) \succeq 0$.
Standard Putinar \ac{SOS} methods may be used to certify the above \acp{PMI} through the use of scalarization. Defining $y \in \R^s$ as a new variable, constraint \eqref{eq:pmi_uncons} is equivalent to imposing that the polynomial $y^T P(x) y$ is nonnegative for all $(x, y) \in \R^n \times \R^s$. Similarly, the constrained problem \eqref{eq:pmi_cons} can restrict $y^T P(x) y$ is nonnegative for all $(x, y)$ in the compact set $\K \times \{y \mid \norm{y}_2^2=1\}$. Applying scalarization increases the size of the Gram matrix used in the Putinar Psatz \eqref{eq:putinar} to $\binom{n+s+d}{d}$.

The work in \cite{scherer2006matrix} introduced a Psatz for \acp{PMI} with a typically reduced Gram matrix size of $s \binom{n+d}{d}$. A matrix $P(x) \in \psd^s[x]$ is an \ac{SOS}-matrix $P(x) \in \Sigma^s[x]$ if there exists a size $q \in \N$, a polynomial vector $v(x) \in \R[x]^q$, and a \textit{Gram} matrix $Q \in \psd_+^{q s}$ such that (Lemma 1 of \cite{scherer2006matrix}):
\begin{equation}
    P(x) = (v(x) \otimes I_s)^T Q (v(x) \otimes I_s).
\end{equation}
\ac{SOS} matrices and polynomials satisfy the relation $\Sigma[x] = \Sigma^1[x]$. The Scherer Psatz proving that a matrix $P(x)$ is \ac{PD} ($P(x) \succ 0$) over $x \in \K$ (for some $\varepsilon > 0$) is  \cite[Theorem 2]{scherer2006matrix} 
\begin{subequations}
\label{eq:scherer}
    \begin{align}
        & P(x) = \sigma_0(x) + \textstyle \sum_i {\sigma_i(x)g_i(x)} + \textstyle \sum_j {\phi_j(x) h_j(x)} + \varepsilon I_q\\
        &\exists  \sigma_0(x) \in \Sigma^s[x], \ \sigma_i(x) \in \Sigma^s[x], \ \phi_j \in \psd^s[x]. \label{eq:scherer_variables}
    \end{align}
\end{subequations}


The Scherer representation \eqref{eq:scherer} is necessary and sufficient to certify $P(x) \succ 0$ over $\K$ if $\K$ is Archimedean (Remark 2 and Equation 9 of \cite{scherer2006matrix}). Note that the multipliers $\sigma, \mu$ from the Scherer Psatz \eqref{eq:scherer_variables} are symmetric-matrix-valued polynoimals of size $s$. The matrix-\ac{WSOS} cone $\Sigma^s[\K]$ and its degree-$2d$ truncation $\Sigma^s[\K]_{\leq 2d}$ will denote the cone of matrices in $\psd^n[\K]$ that admit Scherer positivity certificates in \eqref{eq:scherer}, just like $\Sigma[\K]$ and $\Sigma[\K]_{\leq 2d}$ in the scalar Putinar case. The SOS-matrix $\sigma_0(x)$ at the degree-$2d$ representation of \eqref{eq:scherer} has a Gram matrix of size $s \binom{n+d}{d}$, and the matrix $\sigma_0(x)$ may be described by $s(s+1)/2 \binom{n+2d}{2d}$ independent coefficients. Given degrees $\forall i: d_i = \ceil{\deg{g_i(x)}}, \ d \geq d_i$, the multipliers $\sigma_i(x)$ are polynomial matrices with Gram matrices of size $s \binom{n+d-d_i}{d-d_i}$.

\section{Continuity of Multipliers}
\label{app:continuity}

This appendix proves that the multiplier functions $(\zeta^\pm(A,B), \mu(A,B))$ from program \eqref{eq:altern_feas} (when feasible) may be chosen to be continuous for any \ac{PD} function $q(A,B)$ over $\pc$ (Theorem \ref{thm:dual_continuous}). 
This proof will establish that a set-valued map based on a variant of the feasible set of \eqref{eq:altern_feas} is lower semicontinuous, and will then apply Michael's theorem to certify continuous selections. The approach taken in this proof is similar to establishing lower semi-continuity of $\rho$-indexed sets from \cite{denel1979extensions}, but our problem has perturbations in the left-hand side multiplying the equality-multipliers $\mu$.


Let $q(A, B): \pc \rightarrow \psd_{++}^s$ be a function that is \ac{PD} over the consistency set of plants $\pc$ with a certificate of nonnegativity $(\zeta^\pm(A,B), \mu(A,B))$ by program \eqref{eq:altern_feas}.
We note that $\pc$ is compact by Assumption \ref{assum:compact} and that the mapping from $\pc$ to the constraints in \eqref{eq:altern_feas} are affine (Lipschitz) in $(A, B)$ over the compact $\pc$.

Let $Z = (\psd^s)^{n \times T} \times (\psd^s)^{n \times T} \times (\psd^s)^{n \times (T-1)}$ be the residing space  (possible range) of the  multipliers $(\zeta^+(A,B), \zeta^-(A,B), \mu(A,B))$. In this appendix, the notation $(\zeta^\pm(A,B), \mu(A,B))$ will refer to functions from \eqref{eq:altern_feas}, and variables $(\zeta^\pm, \mu)$ lacking arguments $(A,B)$ will be values in $Z$.

A convex-set-valued map $S: \pc \rightrightarrows Z$ may be defined as the feasible set of program \eqref{eq:altern_feas} for each $(A, B) \in \pc$. The domain of $S$ is equal to $\pc$, since the functions $(\zeta^\pm(A, B), \mu(A,B))$ have values in $Z$ ($S(A,B) \neq \varnothing$) for all $(A, B) \in \pc$. The range is nonclosed due to the \ac{PD} constraint in \eqref{eq:altern_feas_Q}.

Define $\tau^* = \min_{(A,B) \in \pc} \lambda_{\min}(q(A,B))$ as the minimal possible eigenvalue of the \ac{PD} matrix $q(A,B)$. We note that the minimum is attained with $\tau^*>0$ because $\pc$ is compact and $\lambda_{\min}(q(A, B))$ is a continuous function of $(A, B)$ (given that the eigenvalues of a matrix are continuous in the matrix entries).

We define the closed-convex-valued map $S^\tau: \pc \rightrightarrows Z$ for a value $0 < \tau < \tau^*$ as the set of solutions $(\zeta^\pm, \mu)\in Z$ for each $(A,B) \in \pc$ as
\begin{subequations}
\label{eq:altern_feas_tau}
\begin{align}
    & -Q(A, B; \zeta^\pm, \mu) - \tau I \in \psd^s_+ \label{eq:altern_feas_Q_tau}\\
&\zeta_{1i}^+ - \zeta^-_{1i} = \textstyle \sum_{j=1}^n A_{ji}  \mu_{1j}\label{eq:altern_feas_Q_diff_start}  \\
& \zeta_{ti}^+ - \zeta^-_{ti}= \textstyle \sum_{j=1}^n A_{ji}  \mu_{tj}   -  \mu_{t-1,i} & & \forall t \in 2.. T- 1 \\
      & \zeta_{Ti}^+ - \zeta^-_{Ti} = - \mu_{T-1, i} \label{eq:altern_feas_Q_diff_end}\\
      & \zeta_{ti}^\pm \in \psd_+^s, \ \mu_{ti} \in \psd^s & & \forall t=1..T. \label{eq:altern_feas_exists_tau_z}
\end{align}
\end{subequations}

The set-valued mappings $S$ and $S^\tau$ are related by $S^\tau \subset S$ for all admissible $\tau$.

Any (possibly discontinuous) solution function $(\zeta^\pm(A,B), \mu(A,B))$ that certifies positive-definiteness of $q(A, B)$ over $\pc$ via \eqref{eq:altern_feas} satisfies
\begin{align}
    \forall (A, B) \in \pc,  \tau \in (0, \tau^*): & & (\zeta^\pm(A,B), \mu(A,B)) & \in S^\tau(A, B).
\end{align}

A solution $(\zeta^\pm, \mu) \in S^\tau(A,B)$ is a \textit{Slater point} if all matrices in \eqref{eq:altern_feas_Q_tau} and \eqref{eq:altern_feas_exists_tau_z} are \ac{PD} and all equality constraints are fulfilled. A solution $(\zeta^\pm, \mu) \in S^{\tau'}(A,B)$ for some $\tau' > 0$ may be transformed into a new solution $(\tilde{\zeta}^\pm, \mu) \in S^{\tau/4}(A,B)$ such that $(\tilde{\zeta}^\pm, \mu)$ is a Slater point. Specifically, we express $-Q - \tau I$ from \eqref{eq:altern_const} as
\begin{subequations}
\begin{align}
    -Q - \tau I &= q(A, B) - \textstyle \sum_{t=1,i=1}^{T, n} \epsilon \left(\zeta_{ti}^+ + \zeta^-_{ti}\right)\nonumber\\ &-\textstyle\sum_{t=1,i=1}^{T-1,n}  \mu_{ti}h^{0}_{ti} - \tau I \\
    &= q(A, B) - \textstyle \sum_{t=1,i=1}^{T, n} \epsilon \left(\zeta_{ti}^+ + \zeta^-_{ti} + \frac{\tau}{2T n \epsilon}I\right) \nonumber\\ &-\textstyle\sum_{t=1,i=1}^{T-1,n}  \mu_{ti}h^{0}_{ti} - \frac{\tau}{2} I.
\intertext{The shifted multipliers $\tilde{\zeta^\pm}$ may be defined as}
    \tilde{\zeta}^\pm_{ti}&= \zeta^\pm_{ti} + \frac{\tau}{4T n \epsilon}I, \label{eq:multiplier_shift}\\
\intertext{which yields}
    -Q - \tau I&= q(A, B) - \textstyle \sum_{t=1,i=1}^{T, n} \epsilon \left(\tilde{\zeta}_{ti}^+ + \tilde{\zeta}^-_{ti}\right) \nonumber\\ &-\textstyle\sum_{t=1,i=1}^{T-1,n} \mu_{ti}h^{0}_{ti} - \frac{\tau}{2} I \\
    &= \textstyle -\tilde{Q} - \frac{\tau}{2} I. \label{eq:Q_shift}
    \end{align}
\end{subequations}

The differences in \eqref{eq:altern_feas_Q_diff_start}-\eqref{eq:altern_feas_Q_diff_end} cancel out with $\forall t, i:$,
\begin{align}
    \tilde{\zeta}^+_{ti} - \tilde{\zeta}^-_{ti} = \left(\zeta^+_{ti} + \frac{ \tau}{4 T n \epsilon}\right) - \left(\zeta^-_{ti} + \frac{ \tau}{4 T n \epsilon}\right) = \zeta^+_{ti} - \zeta^-_{ti}. \label{eq:difference_shift}
\end{align}
    
\begin{lem}
The solution $(\tilde{\zeta}^\pm(A,B), \mu(A,B))$ constructed from a certificate $(\zeta(A, B), \mu(A,B))$ from \eqref{eq:altern_feas} by \eqref{eq:multiplier_shift} is a Slater point of $S^{\frac{\tau}{4}}(A, B)$ for each $(A,B) \in \pc$.
\label{lem:shift_slater}
\end{lem}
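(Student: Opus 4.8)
The plan is to verify directly that the shifted tuple $(\tilde{\zeta}^\pm(A,B),\mu(A,B))$ meets every clause in the definition of a Slater point of $S^{\tau/4}(A,B)$: strict positive definiteness of the matrix $-Q-\tfrac{\tau}{4}I$ appearing in \eqref{eq:altern_feas_Q_tau}, strict positive definiteness of the multiplier matrices $\zeta^\pm$ in \eqref{eq:altern_feas_exists_tau_z}, and exact satisfaction of the linear difference equations \eqref{eq:altern_feas_Q_diff_start}--\eqref{eq:altern_feas_Q_diff_end}. Since the algebraic identities \eqref{eq:multiplier_shift}, \eqref{eq:Q_shift}, and \eqref{eq:difference_shift} have already been set up just before the statement, the argument reduces to reading strict inequalities off these identities, and no further calculation with the entries of $Q$ should be needed.

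First I would fix $(A,B)\in\pc$ and invoke the observation preceding \eqref{eq:altern_feas_tau} that any certificate $(\zeta^\pm,\mu)$ produced by \eqref{eq:altern_feas} lies in $S^\tau(A,B)$ for every $\tau\in(0,\tau^*)$; in particular $-Q-\tau I\succeq 0$, so $-Q$ carries a uniform definiteness margin of at least $\tau$ on $\pc$. Substituting into the shift identity \eqref{eq:Q_shift}, namely $-\tilde{Q}-\tfrac{\tau}{2}I=-Q-\tau I$, yields at once $-\tilde{Q}\succeq\tfrac{\tau}{2}I$ and hence $-\tilde{Q}-\tfrac{\tau}{4}I\succeq\tfrac{\tau}{4}I\succ 0$, which is precisely the strict form of \eqref{eq:altern_feas_Q_tau} at level $\tau/4$.

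Next I would dispatch the multiplier clauses. The definition \eqref{eq:multiplier_shift} adds the strictly positive multiple $\tfrac{\tau}{4Tn\epsilon}I$ to each PSD matrix $\zeta^\pm_{ti}\succeq 0$, so $\tilde{\zeta}^\pm_{ti}\succeq\tfrac{\tau}{4Tn\epsilon}I\succ 0$, delivering the strict definiteness demanded in \eqref{eq:altern_feas_exists_tau_z}; the matrices $\mu_{ti}$ are untouched and remain symmetric, and since the constraint $\mu_{ti}\in\psd^s$ is a linear (equality) membership rather than a cone inequality, it imposes no Slater requirement. The difference equations are preserved because the equal shifts to $\zeta^+$ and $\zeta^-$ cancel, which is exactly \eqref{eq:difference_shift}: $\tilde{\zeta}^+_{ti}-\tilde{\zeta}^-_{ti}=\zeta^+_{ti}-\zeta^-_{ti}$, while the $\mu_{ti}$ on their right-hand sides are unchanged. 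Collecting these three facts shows $(\tilde{\zeta}^\pm,\mu)\in S^{\tau/4}(A,B)$ with all cone constraints strict, i.e. it is a Slater point.

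The one delicate point — and the main obstacle — is the tension in the size of the shift: enlarging each $\zeta^\pm_{ti}$ to force strict positivity simultaneously increases $Q$ through its term $\sum_{t,i}\epsilon(\zeta^+_{ti}+\zeta^-_{ti})$, thereby eroding the very margin of $-Q$ we rely on. The constant $\tfrac{\tau}{4Tn\epsilon}$ is calibrated so that the $Tn$ shifted pairs contribute exactly $\tfrac{\tau}{2}I$ to $Q$, consuming only half of the available margin $\tau$ and leaving a residual of $\tau/2$, comfortably above the $\tau/4$ threshold. I would double-check this bookkeeping — the index ranges $t=1..T$, $i=1..n$ and the factor $\epsilon$ — to confirm \eqref{eq:Q_shift}, and note $0<\tau/4<\tau<\tau^*$ so that $S^{\tau/4}$ is admissible; with that verified, the Slater conclusion is immediate.
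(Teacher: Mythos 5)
Your proof is correct and follows essentially the same route as the paper's: positive definiteness of $-\tilde{Q}$ via the identity $-\tilde{Q}-\tfrac{\tau}{2}I = -Q-\tau I \succeq 0$, strict positivity of $\tilde{\zeta}^\pm_{ti}$ from adding $\tfrac{\tau}{4Tn\epsilon}I$ to a PSD matrix, and preservation of the equality constraints via \eqref{eq:difference_shift}. Your extra bookkeeping on how the $Tn$ shifted pairs consume exactly $\tfrac{\tau}{2}$ of the margin, and the remark that $\mu_{ti}\in\psd^s$ carries no cone (hence no Slater) requirement, only make explicit what the paper leaves implicit.
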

\begin{proof}
Given that $\zeta^\pm_{ti}(A, B)$ is \ac{PSD} for each $(t,i)$ (from \eqref{eq:altern_feas_exists_zeta}),  adding  \iac{PD} matrix $\frac{\tau}{4 T n \epsilon} I$ to $\zeta^\pm_{ti}(A,B)$ will produce  \iac{PD} $\tilde{\zeta}^\pm_{ti}(A,B)$ from \eqref{eq:multiplier_shift}. The matrix $-\tilde{Q}$ from \eqref{eq:Q_shift} is also \ac{PD}, since $-\tilde{Q} - \frac{\tau}{2} I \succeq 0 \implies -\tilde{Q} \succ \frac{\tau}{4}I$. All equality constraints remain feasible by the observation in  \eqref{eq:difference_shift}, fulfilling the Slater point description.
\end{proof}

\begin{lem}
The set-valued mapping $S^{\tau}$ is lower-semicontinuous over $\pc$.
\label{lem:lsc}
\end{lem}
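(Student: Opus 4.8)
The plan is to verify lower semicontinuity directly from its sequential characterization (Definition 1.4.2 in \cite{aubin2009set}); since the ambient space $Z$ is finite dimensional, sequential lower semicontinuity coincides with lower semicontinuity. Fixing a base plant $(A_0,B_0)\in\pc$, an arbitrary target $z_0=(\zeta_0^\pm,\mu_0)\in S^\tau(A_0,B_0)$, and a sequence $(A_k,B_k)\to(A_0,B_0)$ in $\pc$, I must exhibit $z_k\in S^\tau(A_k,B_k)$ with $z_k\to z_0$. The only structural facts I would use are that the describing data of \eqref{eq:altern_feas_tau} --- namely $q(A,B)$, the affine weights $h^0_{ti}$, and the coefficients $\sum_j A_{ji}\mu_{tj}$ of the equality constraints \eqref{eq:altern_feas_Q_diff_start}--\eqref{eq:altern_feas_Q_diff_end} --- depend on $(A,B)$ in a locally Lipschitz manner over the compact set $\pc$ and affinely in the multiplier variables, together with the strictly feasible point supplied by the shift construction of Lemma \ref{lem:shift_slater}.

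First I would interpolate toward strict feasibility. Let $z^{*}=(\zeta^{*\pm},\mu^{*})$ be a Slater point of $S^\tau(A_0,B_0)$, whose existence follows from inflating the $\zeta^\pm$-blocks of any certificate into the interior of the \ac{PSD} cone as in Lemma \ref{lem:shift_slater} while retaining the strict margin $-Q\succ\tau I$ available at any tightening level below $\tau^*$; thus $-Q(A_0,B_0;z^{*})-\tau I\succ0$, every $\zeta^{*\pm}_{ti}\succ0$, and all equalities hold at $(A_0,B_0)$. For $\theta\in(0,1]$ set $\bar z_\theta=(1-\theta)z_0+\theta z^{*}=(\bar\zeta^\pm,\bar\mu)$. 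Because $z\mapsto -Q(A_0,B_0;z)$ is affine and the cone constraints are convex, $\bar z_\theta$ stays feasible at $(A_0,B_0)$, and combining the boundary point $z_0$ with the strictly interior $z^{*}$ yields quantified slack: there are $c_1,c_2>0$ with $-Q(A_0,B_0;\bar z_\theta)-\tau I\succeq\theta c_1 I$ and $\bar\zeta^\pm_{ti}\succeq\theta c_2 I$, while $\bar z_\theta\to z_0$ as $\theta\to0$.

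Next I would transplant $\bar z_\theta$ to the perturbed parameter $(A_k,B_k)$ and correct only what breaks. The equalities must hold exactly, and at $(A_k,B_k)$ the required value of $\bar\zeta^+_{ti}-\bar\zeta^-_{ti}$ changes by $E_{ti}:=\sum_j\big((A_k)_{ji}-(A_0)_{ji}\big)\bar\mu_{tj}$, whose norm is $O(\norm{(A_k,B_k)-(A_0,B_0)})$ for each fixed $\theta$. I would restore the equalities by the split $\bar\zeta^{\pm}_{ti}\mapsto\bar\zeta^{\pm}_{ti}\pm\tfrac12 E_{ti}$, leaving $\bar\mu$ unchanged. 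The crucial observation is that this split leaves each sum $\bar\zeta^+_{ti}+\bar\zeta^-_{ti}$ --- hence the term $\epsilon\sum_{t,i}(\zeta^+_{ti}+\zeta^-_{ti})$ entering $Q$ in \eqref{eq:altern_const} --- invariant, so the repair decouples the equalities from the matrix inequality: $-Q$ then changes at $(A_k,B_k)$ only through the Lipschitz $(A,B)$-dependence of $q$ and $h^0$, by at most $C'\norm{(A_k,B_k)-(A_0,B_0)}\,I$, while each corrected $\bar\zeta^\pm_{ti}$ forfeits at most $\tfrac12\norm{E_{ti}}$ of its slack.

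Finally I would balance the two error budgets against the interpolation slack: choosing $\theta_k\to0$ with $\norm{(A_k,B_k)-(A_0,B_0)}/\theta_k\to0$ (for instance $\theta_k=\sqrt{\norm{(A_k,B_k)-(A_0,B_0)}}$) makes both $\theta_k c_1-C'\norm{(A_k,B_k)-(A_0,B_0)}$ and $\theta_k c_2-\tfrac12\max_{t,i}\norm{E_{ti}}$ positive for large $k$, so the corrected point $z_k$ satisfies \eqref{eq:altern_feas_Q_tau}--\eqref{eq:altern_feas_exists_tau_z} and lies in $S^\tau(A_k,B_k)$, with $z_k\to z_0$ since $\bar z_{\theta_k}\to z_0$ and $E_{ti}\to0$. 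The main obstacle --- and the reason a naive interpolation fails --- is repairing the exact affine equalities under perturbation without destroying the \ac{PSD} constraint or drifting away from $z_0$; the $\pm\tfrac12 E$ split is what resolves it, since it fixes the equalities using the Slater slack in $\zeta^\pm$ while leaving the quantity that feeds $Q$ untouched, keeping the inequality correction a separate, purely $(A,B)$-driven perturbation that the $\theta$-slack absorbs.
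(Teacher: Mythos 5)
Your proof is correct, but it takes a more self-contained route than the paper. The paper disposes of this lemma in one sentence by invoking the Slater points produced in Lemma \ref{lem:shift_slater} together with the feasible-set-mapping stability results of \cite{daniilidis2013lower}, ``extended to the Matrix case'' --- i.e.\ it outsources the actual perturbation analysis to a citation whose matrix extension is asserted rather than carried out. You instead verify the sequential characterization of lower semicontinuity directly: interpolate the target point toward a Slater point to buy quantified slack $\theta c_1,\theta c_2$, transplant to the perturbed plant, repair the equality constraints, and absorb the errors by taking $\theta_k\to 0$ slower than the data perturbation. The one genuinely problem-specific ingredient you supply, which the paper never makes explicit, is the symmetric split $\bar\zeta^\pm_{ti}\mapsto\bar\zeta^\pm_{ti}\pm\tfrac12E_{ti}$: because $Q$ in \eqref{eq:altern_const} depends on $\zeta$ only through the sums $\zeta^+_{ti}+\zeta^-_{ti}$, this restores the affine equalities exactly while perturbing the \ac{PSD} constraint \eqref{eq:altern_feas_Q_tau} only through the $(A,B)$-dependence of $q$ and $h^0$, decoupling the two error budgets. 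Two points deserve slightly more care than your sketch gives them, though neither is a gap: (i) you need a Slater point of $S^{\tau}(A_0,B_0)$ at the \emph{same} level $\tau$, whereas Lemma \ref{lem:shift_slater} produces one at level $\tau/4$; this is fine because for $\tau<\tau^*$ one can start from a certificate in $S^{\tau''}$ with $\tau''\in(\tau,\tau^*)$ and apply a correspondingly small shift, exactly as you indicate, but it is worth stating that the shift magnitude must be below $\tau''-\tau$; (ii) your rate estimate $C'\norm{(A_k,B_k)-(A_0,B_0)}$ for the change in $-Q$ uses local Lipschitz continuity of $q$, which holds in the paper's applications ($q$ polynomial) but not under the bare continuity hypothesis of Appendix \ref{app:continuity}; replacing the Lipschitz bound by a modulus of continuity $\omega(\delta_k)$ and taking $\theta_k=\sqrt{\omega(\delta_k)}$ repairs this without changing the structure. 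The net trade-off: the paper's proof is shorter and leans on an external theorem, while yours is longer but fully verifiable from the constraint structure of \eqref{eq:altern_feas_tau} alone.
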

\begin{proof}
This follows from the (strong) Slater point characterization points in $S^\tau$ within $S^{\frac{\tau}{4}}$ from Lemma \eqref{lem:shift_slater} by arguments from \cite{daniilidis2013lower} extended to the Matrix case, given that $S^\tau$ has closed convex images and sends a compact set to a Banach space.
\end{proof}

The condition for Michael's theorem (Thm. 9.1.2 of \cite{aubin2009set}) to hold, guaranteeing a continuous selection of $S^{\tau}$ is that $\pc$ is compact, $Z$ is a Banach space, $S^{\tau}$ is lower-semicontinuous, and $S^{\tau}$ has closed, nonempty, convex images for each $(A, B) \in \pc$. All of these conditions hold, so a continuous selection $(\zeta^\pm_s, \mu_s): \pc \rightarrow Z$ may be chosen with $(\zeta^\pm_s(A,B), \mu_s(A, B)) \in S^{\tau}(A,B) \subset S(A,B)$. These continuous functions $(\zeta^\pm_s, \mu_s)$ may therefore be used to certify positive-definiteness of $q(A, B)$ over $\pc$ in \eqref{eq:altern_feas}.




\section{Polynomial Approximability of Multipliers}
\label{app:polynomial_sw}

This appendix proves that \iac{PD} function $q(A,B)$ over $\pc$ may be certified using polynomial multipliers $(\tz^\pm(A,B), \tm(A,B))$ whenever \eqref{eq:altern_feas} is feasible (Theorem \ref{thm:dual_polynomial}). The proof will proceed through the introduction of three positive approximation tolerances $(\eta_0, \eta_1, \eta_2) > 0$ for use in the Stone-Weierstrass theorem over the compact set $\pc$.
For a matrix $F \in \R^{n \times m}$, define the element-wise maximum-absolute-value operator as $\mabs{F} = \max_{(i, j)} \abs{F_{ij}}$.

Let $(\zeta^\pm(A, B), \mu(A, B))$ be a continuous multiplier certificate from \eqref{eq:altern_feas}, in which continuity was proven by Appendix \ref{app:continuity}. 
A symmetric polynomial multiplier matrix $\tm_{ti} \in \psd^s[A, B]$ can be created for each $(t, i)$ using the Stone-Weiesrtrass theorem:
\begin{align}
\label{eq:poly_mu}
    \sup_{(A, B) \in \pc} \mabs{\tm_{ti}(A, B) - \mu_{ti}(A, B)} < \eta_0.
\end{align}

\subsection{Multiplier Bound}
This subsection will approximate the $\zeta$ multipliers by polynomials.
Let $\gamma_{ti} \in \psd[A, B]$ be the right-hand-sides of constraints \eqref{eq:altern_feas_zeta1}-\eqref{eq:altern_feas_zetaT} given $\tm$ with ($\forall i=1..n$):
\begin{subequations}
\begin{align}
    &\gamma_{1i} = \textstyle \sum_{j=1}^n A_{ji}  \tm_{1j}  \\
& \gamma_{ti}= \textstyle \sum_{j=1}^n A_{ji}  \tm_{tj}   -  \tm_{t-1,i} & & \forall t \in 2.. T- 1 \\
      & \gamma_{Ti} = - \tm_{T-1, i}. \end{align}
      \end{subequations}
      
The equations $\zeta_{ti}^+ - \zeta_{ti}^- = \gamma_{ti}$ from constraints \eqref{eq:altern_feas_zeta1}-\eqref{eq:altern_feas_zetaT} have solutions that can be parameterized by a set of continuous symmetric-matrix-valued functions $\phi_{ti}(A, B): \pc \rightarrow \psd^s$:
\begin{subequations}
\label{eq:gamma_phi_param}
\begin{align}
    \forall (t, i): \quad  \zeta_{ti}^+(A, B) &= \phi_{ti}(A, B)/2 + \gamma_{ti}(A, B)/2  \\
      \zeta_{ti}^-(A, B) &= \phi_{ti}(A, B)/2-\gamma_{ti}(A, B)/2.
\end{align}\end{subequations}

The functions $\phi_{ti}$ may be $\eta_1$-approximated by polynomials $\tilde{\phi}_{ti}(A, B) \in \psd^s[A, B]$ for each $(t, i)$ in the compact region $\pc$:
\begin{align}
\label{eq:poly_phi}
  \sup_{(A, B) \in \pc} \mabs{\tph_{ti}(A, B) - \phi_{ti}(A, B)} < \eta_1.
\end{align}

A tolerance $\eta_2>0$ is introduced to define the polynomial approximators $\tz_{ti}$ for each $(t, i)$:
\begin{subequations}
\label{eq:poly_zeta}
\begin{align}
    \tz_{ti}^+(A, B) &= \tph_{ti}(A, B)/2 + \gamma_{ti}(A, B)/2 + (\eta_2/2) I \\
      \tz_{ti}^-(A, B) &= \tph_{ti}(A, B)/2-\gamma_{ti}(A, B)/2 + (\eta_2/2) I.
\end{align}\end{subequations}

The approximators $\tz$ are related to the original multipliers $\zeta$ for each $(t, i)$ by
\begin{align}
\label{eq:poly_zeta_rel}
     \tz_{ti}^\pm(A, B) &= \zeta^\pm_{ti}(A, B) + (\tph_{ti}(A, B) - \phi_{ti}(A, B))/2  + (\eta_2/2) I.
\end{align}

The approximant $\tz$ must take on \ac{PSD} values in order to ensure that it is a valid multiplier for \eqref{eq:altern_feas_exists_zeta}. We utilize a result from the theory of interval matrices in order to choose $\eta_2$.

\begin{lem}
\label{lem:interval}
Let $M \in \psd^s_{++}$ and $R \in \psd^s$ be matrices with $\mabs{R} \leq \eta$ for some $\eta> 0$. A sufficient condition for $M + R \in \psd^s_{++}$ for all possible choices of $R$ is that $M - \eta s I \in \psd_{++}^s$.
\end{lem}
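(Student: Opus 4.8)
The plan is to reduce the positive-definiteness claim to a single scalar inequality on the minimum eigenvalue of the perturbed matrix, and then to control the perturbation through a spectral-norm bound. First I would observe that $M + R \in \psd^s_{++}$ is equivalent to $\lambda_{\min}(M + R) > 0$, so it suffices to bound this minimum eigenvalue from below, uniformly over all admissible $R$ with $\mabs{R} \le \eta$.

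The key step is Weyl's eigenvalue inequality. Since both $M$ and $R$ are symmetric, $\lambda_{\min}(M + R) \ge \lambda_{\min}(M) + \lambda_{\min}(R) \ge \lambda_{\min}(M) - \norm{R}_2$, where $\norm{R}_2$ denotes the spectral (operator-$2$) norm. This reduces everything to bounding $\norm{R}_2$ in terms of the entrywise data $\mabs{R} \le \eta$ and the dimension $s$.

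The main technical point — and the only nontrivial ingredient — is the bound $\norm{R}_2 \le \eta s$. I would obtain it from the entrywise constraint in either of two equivalent ways: via the Frobenius norm, $\norm{R}_2 \le \norm{R}_F = \sqrt{\sum_{i,j} R_{ij}^2} \le \sqrt{s^2 \eta^2} = \eta s$; or, using that $R$ is symmetric so that $\norm{R}_2$ equals its spectral radius, via $\norm{R}_2 \le \norm{R}_\infty = \max_i \sum_{j=1}^s \abs{R_{ij}} \le \eta s$. This estimate is sharp: taking $R = \eta\, \1 \1^T$ (all entries equal to $\eta$) gives a rank-one matrix with $\norm{R}_2 = \eta s$, so the dimensional factor $s$ cannot be removed. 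This is precisely why the hypothesis is stated with the margin $\eta s$.

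Finally I would combine the pieces. The hypothesis $M - \eta s I \in \psd^s_{++}$ is equivalent to $\lambda_{\min}(M) > \eta s$. Substituting into the Weyl bound yields, for every admissible $R$, the chain $\lambda_{\min}(M + R) \ge \lambda_{\min}(M) - \norm{R}_2 \ge \lambda_{\min}(M) - \eta s > 0$, which establishes $M + R \in \psd^s_{++}$ and completes the argument. I expect the spectral-norm estimate to be the only place requiring care; the remaining steps are routine manipulations with standard eigenvalue inequalities.
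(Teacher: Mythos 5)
Your proof is correct, and it takes a genuinely different route from the paper. The paper proves this lemma by casting $M+R$ as a member of a symmetric interval matrix family with center $C=M$ and radius $\Delta = \eta J_s$ (the all-ones matrix scaled by $\eta$), and then invoking a theorem of Rohn on positive definiteness of interval matrices, which requires $\lambda_{\min}(C) > \rho(\Delta) = \eta s$. You instead use Weyl's inequality $\lambda_{\min}(M+R) \geq \lambda_{\min}(M) - \norm{R}_2$ together with the elementary estimate $\norm{R}_2 \leq \eta s$ (via either the Frobenius norm or, since $R$ is symmetric, via $\rho(R) \leq \norm{R}_\infty$). Both arguments land on exactly the same condition $\lambda_{\min}(M) > \eta s$, and your sharpness example $R = \eta\, \1 \1^T$ correctly identifies why the dimensional factor $s$ is unavoidable. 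What your approach buys is self-containedness: it replaces a citation to the interval-matrix literature with two standard facts about symmetric matrices, which arguably makes the dependence on $s$ more transparent. What the paper's approach buys is uniformity with the rest of Appendix B, where the same interval-matrix machinery (centers, radii, spectral radius of the radius matrix) is reused to bound the perturbed certificate $-\tilde{Q}$; stating the lemma in that language lets those later steps quote it directly. Either proof is acceptable as written.
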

\begin{proof}
A symmetric interval matrix $G^I$ may be described by a center $C \in \psd^s$ and a radius $\Delta \in \psd^s$, in which the elements $G \in G^I$ satisfy $G_{ij} \in [C_{ij} - \Delta_{ij}, C_{ij} + \Delta_{ij}]$ and $G_{ij} = G_{ji}$. Letting $\rho(\Delta)$ be the spectral radius of $\Delta$ (maximum absolute value of eigenvalues), Theorem 5 of \cite{rohn1994positive} states that a sufficient condition for all symmetric interval matrices in the family $G^I$ to be \ac{PD} is that $\lambda_{min}(C) > \rho(\Delta)$.

Let $J_s$ be the all-ones matrix of size $s$.
The interval matrix $(M+R)$ for $\mabs{R} \leq \eta$ has a center of $C = M$ and a radius of $\Delta = \eta J_s$. The spectral radius of $\eta J_s$ is $\rho(\eta J_s) = \eta s$. The sufficient condition for Interval Positive-Definiteness as provided by Theorem 5 of \cite{rohn1994positive} is that $\lambda_{min}(M) > \eta s$, or equivalently $M - \eta s I \in \psd^s_{++}$.
\end{proof}

The matrix $(\eta_2/2) I - (\tph_{ti}(A, B) - \phi_{ti}(A, B))/2$ from \eqref{eq:poly_zeta_rel} may be treated as an interval matrix with center $C = (\eta_2/2) I$ and radius $\delta = (J_s \eta_1)/2$. Given that $\zeta_{ti}^\pm \in \psd_{+}^s(A, B)$, a sufficient condition for $\tz^\pm_{ti} \in \psd_{+}^s(A, B)$  by Lemma \ref{lem:interval} is
\begin{equation}
    \label{eq:eta2_eta1}\eta_2 > \eta_1 s.
\end{equation}

\subsection{Certificate Bound}

The $\dx$-constant term $Q$ in \eqref{eq:altern_const} has a polynomial approximation (when substituting $\zeta^\pm \rightarrow \tz^\pm, \ \mu \rightarrow \tm)$ of
\begin{subequations}
\label{eq:altern_const_poly}
\begin{align}
    \tilde{Q} &= -q(A, B) + \textstyle \sum_{t=1,i=1}^{T, n} \epsilon \left(\tz_{ti}^+ + \tz^-_{ti}\right) \nonumber\\
    &+\textstyle\sum_{t=1,i=1}^{T-1,n}  \tm_{ti}h^{0}_{ti} \\
    &= Q + \textstyle\sum_{t=1,i=1}^{T, n} \epsilon(\tph_{ti}- \phi_{ti}  + \eta_2 I) \nonumber\\  
    &+\textstyle\sum_{t=1,i=1}^{T-1,n} (\tm_{ti}-\mu_{ti})h^{0}_{ti} \\
    &=Q + \epsilon \eta_2 T n I +  \textstyle\sum_{t=1,i=1}^{T, n} \epsilon(\tph_{ti} - \phi_{ti}) \nonumber\\
    &+ \textstyle\sum_{t=1,i=1}^{T-1,n} (\tm_{ti}-\mu_{ti})h^{0}_{ti}. 
    \end{align}
    \end{subequations}
    
    The negative of \eqref{eq:altern_const_poly} is
    \begin{align}
    -\tilde{Q}&=-Q - \epsilon \eta_2 T n I -  \textstyle\sum_{t=1,i=1}^{T, n} \epsilon(\tph_{ti} - \phi_{ti})\nonumber \\
    &- \textstyle\sum_{t=1,i=1}^{T-1,n} (\tm_{ti}-\mu_{ti})h^{0}_{ti} \label{eq:altern_const_poly_neg}.
\end{align}


Interval matrices and Lemma \ref{lem:interval} will be used to derive a sufficient condition on $(\eta_0, \eta_1, \eta_2)$ such that $-\tilde{Q}$ is \ac{PD} for all $(A, B) \in \pc$ (satisfying condition \eqref{eq:altern_feas_Q}). Define the smallest eigenvalue of $-Q$ as
\begin{align}
    \lambda^* &= \min_{(A, B) \in \pc} \lambda_{min}(-Q(A, B)) > 0 \\
    \intertext{Because $Q$ satisfies \eqref{eq:altern_feas_Q}, $-Q$ is \ac{PD} over $\pc$ and therefore $\lambda^* > 0$. Further define $\bar{h}^0$ using \eqref{eq:aff_weight} as}
   \bar{h}^0_{ti} &= \max_{(A, B) \in \pc} h^0_{ti}(A, B), \quad  \forall (t, i).
\end{align}

The expression in \eqref{eq:altern_const_poly_neg} therefore satisfies
\begin{align}
    -\tilde{Q} &\succeq (\lambda^* - \epsilon \eta_2 T n)I -  \textstyle\sum_{t=1,i=1}^{T, n} \epsilon(\tph_{ti} - \phi_{ti}) \nonumber \\
    &- \textstyle\sum_{t=1,i=1}^{T-1,n} (\tm_{ti}-\mu_{ti})\bar{h}^{0}_{ti}. \label{eq:altern_const_bar}
\end{align}
Each $(\tph_{ti} - \phi_{ti})$ term in \eqref{eq:altern_const_bar} may be treated as an interval matrix with radius $\0$ and center $\eta_1 J_s$. Likewise, each $(\tm_{ti}-\mu_{ti})$ may be generalized to an interval matrix with radius $\0$ and center $\eta_0 J_s$. This interval matrix description of the right-hand-side of  \eqref{eq:altern_const_bar} leads to an interval matrix with center $C_Q$ and radius $\Delta_Q$:
\begin{subequations}
\label{eq:interval_Q}
\begin{align}
    C_Q &= (\lambda^* - \epsilon \eta_2 T n)I \\
    \Delta_Q &= \textstyle\sum_{t=1,i=1}^{T, n} \epsilon(\eta_1 J_s) + \sum_{t=1,i=1}^{T-1,n} (\eta_0 J_s)\bar{h}^{0}_{ti}. \\
    \intertext{The intermediate definition of $\bar{H} =  \sum_{t=1,i=1}^{T-1,n} \bar{h}^{0}_{ti}$ leads to} 
    \Delta_Q &= (T n \epsilon J_s) \eta_1 + (\bar{H} J_s) \eta_0.
\end{align}
\end{subequations}
\begin{align}
        \intertext{The spectral radius of $\Delta_Q$ is}
    \rho(\Delta_Q) &= (T n \epsilon s) \eta_1 + (\bar{H} s) \eta_0.
\end{align}
By Lemma \ref{lem:interval}, a sufficient condition for the interval family in \eqref{eq:interval_Q} to be \ac{PD} is if
\begin{align}
     &(\lambda^* - (T n \epsilon) \eta_2 )I - [(T n \epsilon s) \eta_1 + (\bar{H} s) \eta_0]I \in \psd_{++}^s,
     \intertext{which may be equivalently expressed as}
     & \lambda^* > (T n \epsilon) \eta_2 + (T n \epsilon s) \eta_1 + (\bar{H} s) \eta_0 > 0.
     \label{eq:lambda_eta_joint}
\end{align}

The combined conditions for admissible $(\eta_0, \eta_1, \eta_2)$ are
\begin{subequations}
\label{eq:eta_all}
\begin{align}
    & \eta_0, \eta_1, \eta_2 > 0, \qquad    \eta_2 > s \eta_1 \\
    & \lambda^* > (T n \epsilon) \eta_2+ (T n \epsilon s) \eta_1 + (\bar{H} s) \eta_0.
\end{align}
\end{subequations}

One possible choice of $(\eta_0, \eta_1, \eta_2)>0$ satisfying \eqref{eq:eta_all} is
\begin{align}
\label{eq:eta_choice}
    \eta_0 &= \frac{\lambda^*}{4 \bar{H} s} & \eta_1 &= \frac{\lambda^*}{ 2 T n \epsilon (2s + 1)} & \eta_2 &= \frac{\lambda^* (s+1)}{2 T n \epsilon (2s + 1)}. 
\end{align}

A polynomial multiplier certificate $(\zeta^\pm, \mu)$ will therefore always exist whenever \eqref{eq:altern_feas} is satisfied.

\end{document}